\numberwithin{equation}{section}
\providecommand{\vertices}{\ensuremath{\mathcal{N}}}
\providecommand{\tria}{\ensuremath{\mathcal{T}}}
\providecommand{\CBY}{C^\textup{BY}}
\providecommand{\CCR}{C_\textup{CR}}
\providecommand{\QCR}{Q_\textup{CR}}
\providecommand{\deltaCR}{\delta_\textup{CR}}
\providecommand{\CR}{\textup{CR}^1(\mathcal{T})}
\providecommand{\PiSZ}{\ensuremath{\Pi_{\textup{SZ}}}}
\providecommand{\dist}{\mathrm{dist}}
\providecommand{\Cloc}{C^{\loc}}
\providecommand{\nablaNC}{\nabla_\textup{NC}}
\providecommand{\INC}{\mathcal{I}_\textup{NC}}
 \providecommand{\highlight}[1]{\mathbf{#1}}
\providecommand{\dx}{\,\mathrm{d}x}
\begin{document}

\author[L. Diening]{Lars Diening}
\author[J. Storn]{Johannes Storn}
\author[T. Tscherpel]{Tabea Tscherpel}

\address[L. Diening, J. Storn, Tabea Tscherpel]{Department of Mathematics, University of Bielefeld, Postfach 10 01 31, 33501 Bielefeld, Germany}
\email{lars.diening@uni-bielefeld.de}
\email{jstorn@math.uni-bielefeld.de}
\email{ttscherpel@math.uni-bielefeld.de}

\thanks{This research was supported by the DFG through the CRC 1283 ``Taming uncertainty and profiting from randomness and low regularity in analysis, stochastics and their applications"}

\subjclass[2010]{
65N30, 		
 65N50,	 	
 65N12, 		
65M60}		

\keywords{$L^2$-projection, $L^{p}$-stability, Sobolev stability, adaptive mesh refinement, Lagrange elements, Crouzeix--Raviart elements}

\title{On the Sobolev and $L^p$-Stability of the $L^2$-Projection}

\begin{abstract}
We show stability of the $L^2$-projection onto Lagrange finite element spaces with respect to (weighted) $L^p$ and $W^{1,p}$-norms for any polynomial degree and for any space dimension under suitable conditions on the mesh grading. 
This includes $W^{1,2}$-stability in two space dimensions for any polynomial degree and meshes generated by newest vertex bisection. 
Under realistic but conjectured assumptions on the mesh grading in three dimensions we show $W^{1,2}$-stability for all polynomial degrees. 
We also propose a modified bisection strategy that leads to better $W^{1,p}$-stability. 
Moreover, we investigate the stability of the $L^2$-projection onto Crouzeix--Raviart elements.   
\end{abstract}

\maketitle 

\setcounter{tocdepth}{3}

\section{Introduction}
\label{sec:intro}

The stability of the $L^2$-projection onto finite element spaces is of significant importance in the analysis of finite element methods. 
More specifically, it is essential in the analysis of parabolic problems, e.g.~\cite{ErikssonJohnson95}. 
For instance it has been shown in~\cite{TV.2016} that the optimality of numerical schemes for linear parabolic problems is equivalent to $W^{1,2}$-stability of the $L^2$-projection. 
Sobolev stability of the $L^2$-projection is also crucial in the study of non-linear parabolic problems~\cite{BreDieStoWic20} in order to avoid a coupling between the temporal and the spatial resolution.

For quasi-uniform meshes $W^{1,2}$-stability follows directly by inverse estimates and by the properties of the Scott--Zhang interpolation operator, cf.~\cite{BrambleXu91}. 
For general graded meshes the situation is more delicate: In~\cite[Sec.\
7]{BanYse14} Bank and Yserentant present a one dimensional counterexample to $W^{1,2}$-stability using strongly graded meshes. 
Thus, $W^{1,2}$-stability cannot hold without restrictions on the mesh. 

It is possible to show $W^{1,2}$-stability if the mesh size varies slightly~ \cite{CrouzeixThomee87,ErikssonJohnson95,Boman06}. 
However, this is not satisfied for adaptive refinement strategies in practice. 
In the following we focus on highly graded meshes that occur in adaptive finite element methods. 

To show $W^{1,2}$-stability for highly graded meshes there are two approaches in the literature. 
The first one achieves stability of the $L^2$-projection with respect to weighted $L^2$-norms by investigation of locally defined weighted mass matrices \cite{BramblePasciakSteinbach02,Carstensen02,GaspozHeineSiebert19pre,Steinbach01,Steinbach02}.
The second approach involves estimating the spatial decay of the $L^2$-projection and then deriving stability with respect to weighted $L^2$-norms \cite{CrouzeixThomee87,BanYse14}. 
Provided that the mesh size does not change too fast the weighted $L^2$-stability implies $W^{1,2}$-stability.

A measure for the change of the mesh size is the so-called grading of the mesh. 
In \cite{C.2004,GHS.2016,KarkulikPavlicekPraetorius13} the grading of meshes resulting from some adaptive refinement strategies is investigated. 
In certain cases this allows to verify the $W^{1,2}$-stability for adaptively refined meshes. 
For example the combination of \cite{BramblePasciakSteinbach02,C.2004} shows $W^{1,2}$-stability of the $L^2$-projection for linear elements and meshes resulting from red-green-blue
refinement. 
In~\cite{BanYse14,GHS.2016} the $W^{1,2}$-stability is proved for polynomials up to order twelve for 2D meshes generated by the newest vertex bisection. 
In 3D under reasonable assumptions on the grading for a (higher dimensional) bisection algorithm, as stated in Conjecture~\ref{conj:NVB-grad}, ~\cite{BanYse14} prove $W^{1,2}$-stability for polynomials up to order seven.
In Section~\ref{sec:results-diff-refin} we give a more detailed review on existing results concerning $W^{1,2}$-stability for various refinement strategies. 

Here we utilize ideas of Bank and Yserentant \cite{BanYse14} to derive improved decay estimates for the $L^2$-projection. 
We achieve this by approximation of the $L^2$-projection with a novel, alternative operator $C$, which is constructed of local weighted projections. 
In contrast to~\cite{BanYse14,GaspozHeineSiebert19pre} our analysis of the operator $C$ avoids numerical calculation of eigenvalues. 
Thus, the results apply to \textit{all} dimensions $d\in \mathbb{N}$ and polynomial degrees $K\in \mathbb{N}$. 

Then, the decay estimates yield weighted $L^2$-stability. 
In fact those two concepts are equivalent, see Remark~\ref{rem:equivalence-decay-weighted}. 
We use the weighted $L^2$-stability to conclude (weighted) $L^p$ and $W^{1,p}$-stability in Theorem \ref{thm:weighted-Lp} and \ref{thm:weighted-Sobolev}. 
A crucial tool in the derivation is a maximal operator $M_\gamma$ introduced in Definition \ref{def:M-graded}. 
Since the proofs solely rely on the weighted $L^2$-estimates, the result is independent of our proof of the decay estimate. 
Hence, it is directly applicable to improved weighted $L^2$-estimates in prospective works. 
Notice that Crouzeix and Thom\'ee achieve similar results for the lowest-order case $K=1$ under an assumption on the number of simplices in a 'distance' layer starting from an arbitrary simplex \cite{CrouzeixThomee87}. 
This additional assumption restricts the admissible meshes for $W^{1,p}$-stability significantly, cf.\ Remark \ref{rem:CR}, and is avoided in our analysis. 
Moreover, we do not use interpolation with $L^{1}$ and $L^{\infty}$ that would require for $W^{1,p}$-stability the strongest assumption on the mesh grading corresponding to the $W^{1,\infty}${-}case. 
Hence, our assumptions on the mesh grading depend continuously on the Lebesgue exponents. 
This avoids the mentioned disadvantages of the analysis in \cite{Boman06,ErikssonJohnson95}. 
Our proofs directly extend to stability results for the $L^2$-projection onto Lagrange spaces equipped with zero boundary values.

Overall, our investigation verifies $L^p$ and $W^{1,p}$-stability for meshes generated by different refinement strategies depending on the polynomial degrees $K$ and Lebesgue exponents $p \in [1,\infty]$.
Tables \ref{tab:grading-stability-2D} and \ref{tab:grading-stability-3D} display admissible ranges of the parameters $K$ and $p$. 
In two dimensions we show $W^{1,2}$-stability for \textit{all} polynomial degrees $K \geq 1$ for meshes generated by newest vertex bisection with the grading estimate obtained in \cite{GHS.2016}.
For the same meshes we are the first to prove $L^1$ and $L^\infty$-stability for all $K\geq 1$ and $W^{1,1}$ and $W^{1,\infty}$-stability for all $K\geq 3$. 
This is an important tool in the numerical analysis of the $p$-heat equation, cf.\ \cite{BreDieStoWic20}.
In three dimensions there exist no grading estimates for the  bisection algorithm by Maubach and Traxler \cite{Maubach95,Traxler97}. 
Under realistic assumptions on the grading conjectured in Section~\ref{subsec:RefStrat}
 we show $W^{1,2}$-stability for all polynomial degrees $K \in \mathbb{N}$. 

Since the grading for the higher dimensional bisection algorithm specified in Conjecture~\ref{conj:NVB-grad} has not been proved yet we introduce a modified adaptive refinement strategy with guaranteed grading.
The routine reduces the mesh grading and thus also leads to improved $L^p$ and $W^{1,p}$-stability while preserving the beneficial adaptive properties of the newest vertex bisection. 
In particular, it allows for optimal convergence of adaptive finite element schemes. 
A similar algorithm is proposed in \cite{DemlowStevenson2011} which has the same structure but is based on a different notion of mesh grading. 

For Crouzeix--Raviart finite elements a further alternative notion of mesh grading is used. 
This notion is natural for these finite elements and results in better grading estimates. 
Those estimates allow to conclude $L^p$ and (broken) $W^{1,p}$-stability of the $L^2$-projection onto these elements for any dimension $d \leq 32$ for the bisection algorithm mentioned above without modification.

Let us conclude with the structure of this paper. 
Section~\ref{sec:DecayWithC} describes the decay estimate of Bank and Yserentant \cite{BanYse14} in an abstract framework. 
In this setting decay estimates for the $L^2$-projection follow from the existence of an approximating operator $C$ analyzed in Section~\ref{dec:design-C}. 
In Section~\ref{sec:WeightedEst} we utilizes the decay estimate to deduce (weighted) $L^p$ and $W^{1,p}$-stability.
In Section~\ref{sec:results-diff-refin} we summarize available grading estimates and give an overview of the resulting stability estimates. Moreover, we introduce a modified refinement strategy with improved grading estimates.
This paper concludes with an investigation of the stability of the $L^2$-projection onto the space of Crouzeix--Raviart elements in Section \ref{sec:BY_CR}.

\section{The Decay Estimate of Bank and Yserentant}
\label{sec:DecayWithC}
This section introduces the framework in which the approach of Bank and Yserentant \cite{BanYse14} is set. 
This is the starting point of our investigation.

\subsection{Notation}
Let us start with some notation used throughout this paper. 
Let $\Omega$ be a bounded, polyhedral domain in~$\Rd$, for $d\in \mathbb{N}$. 
As usual $L^p(\Omega)$ and $W^{1,p}(\Omega)$ denote the Lebesgue and Sobolev spaces for $p\in[1,\infty]$ with norms $\norm{\cdot}_p$ and $\norm{\cdot}_{1,p}$. 
We denote by $\tria$ a regular triangulation of~$\Omega$ into closed simplices. 
By $\vertices= \vertices(\tria)$ we denote the set of vertices of~$\tria$. 
For $i \in \vertices$ let $\omega_i$ be the patch around~$i$, i.e., $\omega_i = \bigcup \set{T \in \mathcal{T}\colon i \in T}$. 
With slight abuse of notation we use~$\omega_i$ also for the collection of simplices in the patch, i.e.,  $\omega_i = \set{T \in \mathcal{T}\colon i \in T}$. 
For a simplex $T\in \mathcal{T}$ the space of polynomials of maximal degree $K\in \mathbb{N}$ reads $\mathcal{L}_K(T)$. 
For $K\in \mathbb{N}$ the space of piecewise polynomials and the space of Lagrange finite elements read
\begin{align*}
  \mathcal{L}^0_K(\tria) &\coloneqq \lbrace v \in L^\infty(\Omega)\colon v|_T \in \mathcal{L}_K(T) \text{ for all }T\in \mathcal{T}\rbrace,
  \\
  \mathcal{L}^1_K(\tria) &\coloneqq \lbrace v \in W^{1,\infty}(\Omega) \colon v\in \mathcal{L}_K^0(\tria)\rbrace. 
\end{align*}
For subdomains $\omega \subset \Omega$ we use $\norm{\cdot}_{p,\omega}$ for the norm of~$L^p(\omega)$.  
The inner product of~$L^2(\Omega)$ and $L^2(\omega)$ is denoted by $\langle \cdot,\cdot \rangle$ and $\langle \cdot,\cdot\rangle_\omega$, respectively. 
We denote the mean value integral by $\dashint_T \cdot\dx \coloneqq \frac{1}{\abs{T}} \int_T \cdot \dx$, where $\abs{T}$ is the volume of~$T \in \tria$. 

\subsection{The Approximating Operator~$C$}
Let $Q\colon L^2(\Omega)\to \mathcal{L}_{K}^1(\tria)$ denote the $L^2$-projection onto the Lagrange finite element space, that is,
\begin{align}
  \label{eq:defL2proj}
  \langle Qu,v_K\rangle = \langle u , v_K\rangle\qquad\text{for all
  }u\in L^2(\Omega), \;v_K\in \mathcal{L}^1_{K}(\tria). 
\end{align}
The operator $Q$ extends to an operator $Q \colon L^1(\Omega)\to \mathcal{L}_{K}^1(\tria)$. 

The approach by Bank and Yserentant~\cite{BanYse14} features an approximation of $Q$ by successive applications of a linear local operator~$C$. 
The properties of the operator~$C$ and a related distance~$\delta$ are summarized as follows.
\begin{enumerate}[label=(C\arabic{*})]
\item \label{itm:self-adjoint}
  \textbf{(Self-adjoint)} 
  The operator $C\colon L^2(\Omega) \to \mathcal{L}^1_K(\mathcal{T})$ is  self-adjoint and linear.
\item \label{itm:ellipticity}
 \textbf{(Ellipticity)} 
 The restriction $C|_{\mathcal{L}_K^1(\mathcal{T})}$ is $L^2$-elliptic with condition number 
  \begin{align*}
    \operatorname{cond}_2(C|_{\mathcal{L}_K^1(\mathcal{T})}) 
    =
    \frac{\lambda_{\max}(C|_{\mathcal{L}_K^1(\mathcal{T})})}{\lambda_{\min}(C|_{\mathcal{L}_K^1(\mathcal{T})})}<
    \infty. 
  \end{align*}
\item \label{itm:distance} 
\textbf{(Distance)} 
 A symmetric notion of neighbors for simplices $T,T' \in\tria$ with $T \neq T'$ is used which ensures (at least) that $T \cap T' \neq \emptyset$ if $T, T' \in \tria$ are neighbors. 
This induces an integer-valued metric~$\delta$ on $\tria$: 
For neighbors $T,T'$ we assign $\delta(T,T') = 1$. 
For any $T,T' \in \tria$  the value $\delta(T,T')$ is set to be the smallest integer $N \in \mathbb{N}_0$ such that there exist $T_0, \ldots, T_N \in \tria$ with $T_0 = T$, $T_N = T'$ and 
\begin{align*}
\delta(T_{j-1},T_j) = 1 \quad \text{ for all } j = 1, \ldots, N. 
\end{align*}
Since $\Omega$ is connected this means that $\delta$ is a geodesic distance. 
\item \label{itm:locality} 
\textbf{(Locality)} 
If $v \in L^2(\Omega)$ is supported on the collection~$L \subset \mathcal{T}$ of simplices, then the support of~$Cv$ is at most one layer of simplices larger, i.e., if $T' \subset \support(C v)$, then $\delta(T',L) \coloneqq \min_{T \in L} \delta(T',T) \leq 1$.
\end{enumerate}
Let us assume in the following that~$C$ and~$\delta$ satisfy~\ref{itm:self-adjoint}--\ref{itm:locality}.
Suitable operators $C$ and distance functions $\delta$ are introduced in Section \ref{dec:design-C} and \ref{sec:BY_CR}.

\begin{lemma}[Two-sided identity]
  \label{lem:two-sided-identity} We have  $C=QC =CQ$.
\end{lemma}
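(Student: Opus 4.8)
The plan is to lean on just two structural facts: by~\ref{itm:self-adjoint} the operator $C$ maps $L^2(\Omega)$ into the finite element space $\mathcal{L}^1_K(\mathcal{T})$, and by the defining property~\eqref{eq:defL2proj} the projection $Q$ restricts to the identity on $\mathcal{L}^1_K(\mathcal{T})$. Neither the ellipticity~\ref{itm:ellipticity} nor the distance/locality properties~\ref{itm:distance}--\ref{itm:locality} should be needed.

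First I would prove $QC = C$. For any $u \in L^2(\Omega)$ we have $Cu \in \mathcal{L}^1_K(\mathcal{T})$, and since $Q$ is a projection onto this space it fixes $Cu$, so $QCu = Cu$. This is immediate and uses only that $C$ takes values in the finite element space.

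Next I would prove $CQ = C$, which is where self-adjointness enters. The projection $Q$ is self-adjoint on $L^2(\Omega)$ because it is $L^2$-orthogonal, and $C$ is self-adjoint by~\ref{itm:self-adjoint}. Hence for arbitrary $u, v \in L^2(\Omega)$,
\[
\langle CQu, v\rangle = \langle Qu, Cv\rangle = \langle u, QCv\rangle = \langle u, Cv\rangle = \langle Cu, v\rangle,
\]
where the first and last equalities use self-adjointness of $C$, the second uses self-adjointness of $Q$, and the third invokes the identity $QC = C$ just established. Since $v$ ranges over all of $L^2(\Omega)$, this gives $CQu = Cu$.

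There is no real obstacle here; the only points requiring a moment's care are checking that $C$ genuinely lands in $\mathcal{L}^1_K(\mathcal{T})$ (so that $Q$ acts as the identity on its range) and that the $L^2$-projection $Q$ is self-adjoint, after which the statement follows from the two-line computation above.
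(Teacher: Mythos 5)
Your proof is correct and follows the same route as the paper: first $QC=C$ because $Q$ is the identity on the range of $C$, then $CQ=C$ by self-adjointness of $Q$ and $C$ (the paper writes this as $CQ=(Q^*C^*)^*=(QC)^*=C^*=C$, which is exactly your inner-product computation in adjoint notation).
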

\begin{proof}
Since $Q|_{\mathcal{L}_K^1(\mathcal{T})}$ is the identity, one has that $C = QC$. 
This identity and the fact that both $Q$ and $C$ are self-adjoint imply that
  \begin{align*}
    & CQ = (Q^*C^*)^* = (QC)^* = C^* = C.
  \end{align*}
\end{proof}
We show that successively applying the operator $C$ yields an approximation of the $L^2$-projection $Q$. 
More precisely, let $\identity$ denote the identity mapping and set the function $u^{(0)} \coloneqq 0$. 
For a given function $u \in L^2(\Omega)$ we define $u^{(\nu+1)} \in \mathcal{L}_K^1(\mathcal{T})$ by the recursion
\begin{align}\label{eq:defRecursion}
  u^{(\nu+1)}-u  &= (\identity-C)(u^{(\nu)}-u)\qquad\text{for all }\nu \in\mathbb{N}.
\end{align} 
By the two-sided identity in Lemma~\ref{lem:two-sided-identity} we have $C=CQ$ which implies that
\begin{align*}
u - Qu = (\identity-C)(\identity-Q)u. 
\end{align*}
Applying this in the recursion \eqref{eq:defRecursion} yields that
\begin{align*}
  u^{(\nu+1)}- Qu
  &=   (\identity-C)(u^{(\nu)}-u) + (\identity-C)(\identity -Q)u
  = (\identity-C)(u^{(\nu)}-Qu).
\end{align*}
This identity proves that 
\begin{align}
  u^{(\nu)} - Qu  &= (\identity-C)^{\nu}(u^{(0)}-Qu).
\end{align}
Using optimal polynomials this process can be accelerated. 
We denote by~$p_{\nu}$ polynomials with coefficients $a_{l \nu} \in \mathbb{R}$ with $\sum_{l=0}^{\nu} a_{l \nu} = 1$ and 
\begin{align*}
  p_{\nu}(\lambda) &\coloneqq \sum_{l=0}^{\nu} a_{l \nu}\lambda^{l}\qquad\text{for all }\nu \in\mathbb{N}.
\end{align*}
We define the (accelerated) sequence~$Q^{(\nu)} u$ by $Q^{(0)} u \coloneqq 0$ and
\begin{align}
  \label{eq:Qnu}
  Q^{(\nu)} u&\coloneqq \sum_{l=0}^{\nu} a_{l \nu}u^{(l)}\qquad\text{for all }\nu \in\mathbb{N}.
\end{align}
Then we have that
\begin{align}
   Q^{(\nu)} u - Qu&= p_\nu(\identity -C) \, (u^{(0)}-Qu).
\end{align}
With the condition number $\kappa \coloneqq \operatorname{cond}_2(C|_{\mathcal{L}_K^1(\mathcal{T})})$ of~\ref{itm:ellipticity} and the decay parameter
  \begin{align}\label{eq:ConvSpeed2}
  q &\coloneqq \frac{\sqrt{\kappa}-1}{\sqrt{\kappa}+1},
  \end{align}
the theory of \emph{polynomially accelerated additive subspace correction} \cite[Lem.\ 2.3]{BanYse14} yields that optimal choice of coefficients $a_{l\nu}$ of the polynomials $p_{\nu}$ leads to the estimate
\begin{align}\label{eq:ConvSpeed}
  \norm{Q^{(\nu)} u - Qu}_2
  &\leq \frac{2 q^\nu}{1+q^{2\nu}} \,
    \norm{Qu}_2\qquad\text{for all }\nu \in \mathbb{N}.
\end{align}
Let $\indicator_A$ denote the indicator function for some
set $A \subset \Rd$. 
If $L$ is a collection of simplices, then we abbreviate $\indicator_L\coloneqq \indicator_{\bigcup L}$. 
The distance of two collections of simplices $L,L' \subset \mathcal{T}$ is defined by $\delta(L,L') \coloneqq \min \{ \delta(T,T')\colon T\in L,\, T' \in L'\}$. 

\begin{proposition}[{Decay estimate, \cite[Lem.~3.1]{BanYse14}}]\label{prop:decay}
  Let $L,L' \subset \mathcal{T}$ be collections of simplices and let $u\in L^2(\Omega)$. 
  Then
  \begin{align*}
    \norm{ \indicator_{L}Q (\indicator_{L'} u)}_2 \leq 
    \frac{2
    q^{\delta(L,L')-1}}{1+q^{2(\delta(L,L')-1)}}
    \norm{\indicator_{L'} u}_2 \leq
    \min \bigset{ 2
    q^{\delta(L,L')-1},1}
    \norm{\indicator_{L'} u}_2.
  \end{align*}
\end{proposition}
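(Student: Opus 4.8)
The plan is to feed the truncated datum $w\coloneqq\indicator_{L'}u$ into the polynomially accelerated scheme \eqref{eq:defRecursion}--\eqref{eq:ConvSpeed} and to exploit the locality~\ref{itm:locality} of $C$ to control where the iterates are supported. Running the recursion \eqref{eq:defRecursion} and the definition \eqref{eq:Qnu} with $u$ replaced by $w$ produces functions $w^{(\nu)}\in\mathcal{L}^1_K(\tria)$ and $Q^{(\nu)}w$ for which \eqref{eq:ConvSpeed} reads
\begin{align*}
  \norm{Q^{(\nu)}w-Qw}_2\leq\frac{2q^{\nu}}{1+q^{2\nu}}\,\norm{Qw}_2\qquad\text{for all }\nu\in\mathbb{N}_0,
\end{align*}
where for $\nu=0$ this is just the trivial identity $\norm{Qw}_2=\norm{Qw}_2$.

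The core step is to show that $Q^{(\nu)}w$ is supported on the collection $\set{T'\in\tria\colon\delta(T',L')\leq\nu}$. First, rewriting \eqref{eq:defRecursion} as $w^{(\nu+1)}=(\identity-C)w^{(\nu)}+Cw$ and starting from $w^{(0)}=0$, an induction shows $w^{(\nu)}=r_\nu(C)w$ for a polynomial $r_\nu$ of degree at most $\nu$ with $r_\nu(0)=0$; hence $w^{(\nu)}$ is a linear combination of $Cw,\dots,C^\nu w$. Since $w$ is supported on $\bigcup L'$, a second induction using~\ref{itm:locality} together with the triangle inequality for the geodesic distance $\delta$ from~\ref{itm:distance} gives that $C^jw$ is supported on $\set{T'\in\tria\colon\delta(T',L')\leq j}$. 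Taking $0\leq j\leq\nu$ and recalling $w^{(0)}=0$, the combination $Q^{(\nu)}w=\sum_{l=0}^\nu a_{l\nu}w^{(l)}$ is therefore supported on $\set{T'\in\tria\colon\delta(T',L')\leq\nu}$.

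To conclude, set $N\coloneqq\delta(L,L')$; we may assume $N\geq 1$, since otherwise there is nothing to prove beyond the trivial bound $\norm{\indicator_LQ(\indicator_{L'}u)}_2\leq\norm{\indicator_{L'}u}_2$ and $\min\set{2q^{N-1},1}=1$. Choosing $\nu\coloneqq N-1\geq 0$, every $T\in L$ satisfies $\delta(T,L')\geq\delta(L,L')=N>\nu$ by the definition of $\delta(L,L')$, so $T$ lies outside the support of $Q^{(\nu)}w$ and hence $\indicator_LQ^{(\nu)}w=0$. Consequently
\begin{align*}
  \norm{\indicator_LQ(\indicator_{L'}u)}_2
  &=\norm{\indicator_L\bigl(Qw-Q^{(N-1)}w\bigr)}_2\leq\norm{Qw-Q^{(N-1)}w}_2
  \\
  &\leq\frac{2q^{N-1}}{1+q^{2(N-1)}}\,\norm{Qw}_2,
\end{align*}
and the first claimed inequality follows because $Q$ is the $L^2$-orthogonal projection, so $\norm{Qw}_2\leq\norm{w}_2=\norm{\indicator_{L'}u}_2$. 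The second inequality is the elementary estimate $\tfrac{2t}{1+t^2}\leq\min\set{2t,1}$ evaluated at $t=q^{N-1}\geq 0$, which holds since $1+t^2\geq 1$ and $1+t^2\geq 2t$.

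I expect the only genuine work to lie in the localisation step: correctly counting how many layers of simplices each application of $C$ adds, and invoking the triangle inequality for $\delta$; everything else is immediate from the already-established convergence estimate \eqref{eq:ConvSpeed}. The one conceptual point worth isolating is that passing to the polynomially accelerated iterates $Q^{(\nu)}$ does not enlarge the support beyond that of $\nu$ successive applications of the local operator $C$, so the Chebyshev-type convergence rate in $\nu$ transfers verbatim into a spatial decay rate in $\delta(L,L')$.
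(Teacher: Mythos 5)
Your proof is correct and follows essentially the same route as the paper's: apply the accelerated iteration to $\indicator_{L'}u$, use the locality \ref{itm:locality} to show that $Q^{(\delta(L,L')-1)}(\indicator_{L'}u)$ vanishes on $L$, and then invoke \eqref{eq:ConvSpeed} together with $\norm{Qw}_2\leq\norm{w}_2$. You merely spell out two steps the paper leaves implicit — the polynomial representation $w^{(\nu)}=r_\nu(C)w$ behind the support-propagation argument, and the $L^2$-contractivity of $Q$ — which is fine.
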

\begin{proof}
  Let us apply the accelerated polynomial subspace correction~\eqref{eq:Qnu} to the function $\indicator_{L'} u$, supported on~$L'$. 
  Due to the locality of~$C$ according to~\ref{itm:locality}, each application of~$C$ 
increases the support by at most one layer of simplices (measured in the distance $\delta$). 
  The same holds for the accelerated polynomial subspace
  correction~$Q^{(\nu)}(\indicator_{L'} u)$. 
  In particular, the support of~$Q^{(\nu)}(\indicator_{L'} u)$ is at most~$\nu$ layers larger than~$L'$. 
  Thus, $Q^{(\delta(L,L')-1)}(\indicator_{L'} u)$ is still zero on~$L$. 
  Hence,~\eqref{eq:ConvSpeed} implies that
  \begin{align*}
    \norm{ \indicator_{L}Q (\indicator_{L'} u)}_2
    &=
      \bignorm{ \indicator_{L}\big( Q (\indicator_{L'} u) -
      Q^{(\delta(L,L')-1)}(\indicator_{L'}u)\big)}_2
      \\
    &\leq
      \bignorm{Q (\indicator_{L'} u) - Q^{(\delta(L,L')-1)}(\indicator_{L'}u)}_2
     \\
     &\leq
      \frac{2 q^{\delta(L,L')-1}}{1+q^{2(\delta(L,L')-1)}}
      \norm{\indicator_{L'} u}_2. 
  \end{align*}
  The estimate $2t/(t^2+1)\leq 1$ for $t\in \mathbb{N}$ proves the second inequality. 
\end{proof}
\begin{remark}[Element-wise decay estimates]
  \label{rem:CR}
  Crouzeix and Thom{\'e}e~\cite{CrouzeixThomee87} also derive decay estimates for the $L^2$-projection. 
  However, they do not use layers but only $L=\set{T}$ and $L'= \set{T'}$ for $T,T' \in \mathcal{T}$. 
  Therefore, they need an additional assumption on the growth of the distance layers around each simplex~$T$ expressed in terms of constants~$\alpha$ and $\beta$. 
  Even with the grading estimates in \cite{GHS.2016} one has $\alpha=4$ and $\beta \geq 2$ for the 2D newest vertex bisection. 
  Since $0.318 \cdot \beta \alpha^{\frac 12} > 1$, their results do not include $W^{1,2}$-stability for 2D newest vertex bisection with $K=1$. 
\end{remark}

\section{The Operator~$C$ and the Metric~$\delta$}
\label{dec:design-C}
In this section we design a novel operator~$C\colon L^2(\Omega) \to \mathcal{L}_K^1(\tria)$ satisfying \ref{itm:self-adjoint}--\ref{itm:locality}.
We use a discrete partition of unity as weights for local weighted~$L^2$-projections rather than using the partition of unity and local unweighted $L^2$-projections as in \cite{BanYse14}. 
Remark~\ref{rem:comparison} discusses the differences in detail.
We verify \ref{itm:self-adjoint}, \ref{itm:distance} and \ref{itm:locality} in Subsection~\ref{subsec:DefOfC}, and property \ref{itm:ellipticity} is proved in the remaining Subsections \ref{sec:upper-eigenv-bound}--\ref{sec:CompLowerBound}. 

Recall, that $\vertices$ is the set of vertices
of~$\mathcal{T}$. 
Let $(\varphi_i)_{i \in \vertices}$ denote the Lagrange basis of~$\mathcal{L}^1_1(\mathcal{T})$, i.e., $\varphi_i(j) = \delta_{ij}$ for $i,j\in \vertices$. 
This basis serves as a discrete partition of unity $\sum_{i \in \vertices} \varphi_i = 1$ and the functions $\varphi_i$ are used as weights for our local weighted projections.

\subsection{Construction of $C$ and $\delta$}
\label{subsec:DefOfC}
With $(\varphi_i)_{i \in \vertices}$ we can decompose~$\mathcal{L}^1_K(\tria)$ as
\begin{align}
  \label{eq:space-decomp}
  \mathcal{L}_K^1(\tria) = \sum_{i\in \vertices} \varphi_i
  \mathcal{L}_{K-1}^1(\omega_i),
\end{align}
where $\omega_i$ denotes the patches around a vertex~$i \in \vertices$. 
Let us explain this decomposition. 
It is clear that the sum is contained in~$\mathcal{L}^1_K(\mathcal{T})$. 
To see the opposite inclusion let~$\psi$ be a Lagrange basis function of~$\mathcal{L}^1_K(\mathcal{T})$. 
Locally $\psi|_T$ contains as factor one of the barycentric coordinates~$\lambda_0, \dots, \lambda_d$ of $T \in \tria$, which corresponds to one of the~$\varphi_i$. 
We split off this factor to find~$\varphi_i$ and $v_{K-1} \in \mathcal{L}^1_{K-1}(\omega_i)$ such that $\psi = \varphi_i v_{K-1}$.

We set the weighted inner product $\skp{\cdot}{\cdot}_{\varphi_i} \coloneqq \skp{\varphi_i \cdot}{\cdot}$ on the local space $\mathcal{L}_{K-1}^1(\omega_i)$. 
The $L^2$-orthogonality of~$Q$ yields for all $v_{K-1}\in\mathcal{L}_{K-1}^1(\mathcal{T})$, $i\in
\vertices$ and $u \in L^2(\Omega)$
\begin{align}
  \label{eq:Qu-aux}
  \skp{Qu}{v_{K-1}}_{\varphi_i} =
  \langle  Qu,\varphi_i v_{K-1}\rangle = \langle 
  u, \varphi_i v_{K-1}\rangle =
  \skp{
  u}{v_{K-1}}_{\varphi_i}.
\end{align}
Thus, the difference $Qu-u$ is orthogonal to the space $\mathcal{L}^1_{K-1}(\mathcal{T})$ with respect to $\langle \cdot ,\cdot\rangle_{\varphi_i}$. 
Let $C_i$ denote the (local) orthogonal projection with respect to $\skp{\cdot}{\cdot}_{\varphi_i}$ mapping $L^2(\omega_i)$ onto the space $\mathcal{L}^1_{K-1}(\omega_i)$.
By restriction  of the domain, we can extend~$C_i$ to $C_i \colon L^2(\Omega) \to \mathcal{L}^1_{K-1}(\omega_i)$. 
Overall, for any $i \in \vertices$ we have that
\begin{align}
  \label{eq:Pjweighted}
  \skp{C_i u}{v_{K-1}}_{\varphi_i} = 
  \skp{u}{v_{K-1}}_{\varphi_i} \quad \text{ for all } v_{K-1} \in \mathcal{L}^1_{K-1}(\mathcal{T}). 
\end{align}
In particular, both $Q u - u$ and $C_i u -u$ are orthogonal to~$\mathcal{L}^1_{K-1}(\mathcal{T})$ with respect to the inner product $\skp{\cdot}{\cdot}_{\varphi_i}$.
This motivates us to approximate $Q$ by an operator $C \colon L^2(\Omega) \to \mathcal{L}^1_K(\mathcal{T})$ defined by the sum of the local operators $C_i$ as
\begin{align}\label{eq:def_C}
  C &\coloneqq \sum_{i\in \vertices} \varphi_i C_i.
\end{align}
Note that the structure of~$C$ is related to the space decomposition~\eqref{eq:space-decomp}. 

Let us introduce the distance function~$\delta$
on~$\mathcal{T}$ based on vertex neighborhoods.
\begin{definition}[{Distance} $\delta$]\label{def:Metric}
We call $T, T' \in \tria$ with $T \neq T'$ (nodal) neighbors, if they share a vertex. 
Then the induced (geodesic) distance function $\delta$ as in \ref{itm:distance} satisfies that $\delta(T,T') = 1$ if and only if $T \neq T'$ and $T \cap T' \neq \emptyset$.  
\end{definition}

\begin{theorem}[Properties of $C$ and $\delta$]\label{thm:propCd}
The operator $C$ and the distance $\delta$ defined in \eqref{eq:def_C} and in Definition~\ref{def:Metric}, respectively, satisfy \ref{itm:self-adjoint}--\ref{itm:locality}. 
More precisely,  
\begin{enumerate}[label=(\roman{*})]
\item \label{itm:verify-C1} 
$C$ is linear and self-adjoint with respect to the $L^2$-inner product~$\skp{\cdot}{\cdot}$; 
\item \label{itm:verify-C2} 
$C|_{\mathcal{L}^1_K(\mathcal{T})}$ is $L^2$-elliptic with condition number $\operatorname{cond}_2(C|_{\mathcal{L}^1_K(\mathcal{T})}) \leq \frac{2K+d}{K}$;
\item \label{itm:verify-C3} 
 $\delta$ is a geodesic distance function on~$\mathcal{T}$ induced by the vertex based notion of neighborhoods on $\tria$;
\item \label{itm:verify-C4} 
if $v \in L^2(\Omega)$ is supported on the collection~$L \subset \mathcal{T}$ of simplices, then the support of~$Cv$ is at most one layer of simplices larger. 
\end{enumerate}
Moreover, 
\begin{enumerate}[label=(\roman{*})]
\setcounter{enumi}{4}
\item \label{itm:Lk-1_id}
 $C$ is the identity on $\mathcal{L}^1_{K-1}(\tria)$. 
\end{enumerate} 
\end{theorem}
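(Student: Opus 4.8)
The plan is to verify the five items essentially in the order listed, since each is a direct consequence of the construction of $C$ in \eqref{eq:def_C} together with standard properties of the weighted local projections $C_i$. For \ref{itm:verify-C1}, linearity is immediate because each $C_i$ is linear and $\varphi_i$ is a fixed multiplier. For self-adjointness I would compute, for $u,w \in L^2(\Omega)$,
\begin{align*}
  \skp{Cu}{w} = \sum_{i\in\vertices} \skp{\varphi_i C_i u}{w} = \sum_{i\in\vertices} \skp{C_i u}{w}_{\varphi_i},
\end{align*}
and then use that $C_i$ is the $\skp{\cdot}{\cdot}_{\varphi_i}$-orthogonal projection onto $\mathcal{L}^1_{K-1}(\omega_i)$, hence self-adjoint with respect to that inner product, together with $C_i^2 = C_i$; writing $\skp{C_i u}{w}_{\varphi_i} = \skp{C_i u}{C_i w}_{\varphi_i} = \skp{u}{C_i w}_{\varphi_i} = \skp{\varphi_i u}{C_i w}$ and summing over $i$ gives $\skp{Cu}{w} = \skp{u}{Cw}$. (Here one should note $\sum_i \varphi_i = 1$ is not even needed for self-adjointness, only for the identity property in \ref{itm:Lk-1_id}.)

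For \ref{itm:verify-C3} and \ref{itm:verify-C4} I would simply invoke Definition~\ref{def:Metric}: $\delta$ is by construction the geodesic distance induced by the nodal-neighbor relation, which settles \ref{itm:verify-C3}. For \ref{itm:verify-C4}, if $v$ is supported on $L$, then $C_i v \equiv 0$ whenever $\omega_i \cap (\bigcup L) $ has measure zero, i.e.\ unless the vertex $i$ belongs to some simplex meeting $L$; and for such $i$, the function $\varphi_i C_i v$ is supported in $\omega_i$, which consists of simplices at $\delta$-distance at most $1$ from $L$. Summing over the finitely many relevant $i$ shows $\support(Cv)$ lies within one layer of $L$, which is \ref{itm:verify-C4}; this also re-proves \ref{itm:locality}. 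Item \ref{itm:Lk-1_id} is the cleanest: for $v \in \mathcal{L}^1_{K-1}(\tria)$ one has $v|_{\omega_i} \in \mathcal{L}^1_{K-1}(\omega_i)$, so $C_i v = v|_{\omega_i}$ by \eqref{eq:Pjweighted} (the projection fixes its range), and therefore $Cv = \sum_i \varphi_i C_i v = \big(\sum_i \varphi_i\big) v = v$ using the partition of unity.

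The main obstacle, and what I expect to require genuine work, is the eigenvalue bound in \ref{itm:verify-C2}, $\operatorname{cond}_2(C|_{\mathcal{L}^1_K(\mathcal{T})}) \le (2K+d)/K$. Since $C$ is self-adjoint and positive on $\mathcal{L}^1_K(\mathcal{T})$, it suffices to produce matching lower and upper bounds for the Rayleigh quotient $\skp{Cv_K}{v_K}/\skp{v_K}{v_K}$ over $v_K \in \mathcal{L}^1_K(\mathcal{T})$. Using the self-adjointness computation above, $\skp{Cv_K}{v_K} = \sum_i \skp{C_i v_K}{C_i v_K}_{\varphi_i}$, so the question reduces to comparing $\sum_i \|C_i v_K\|_{\varphi_i}^2$ with $\|v_K\|_2^2 = \sum_i \skp{\varphi_i v_K}{v_K}$. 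The upper bound $\lambda_{\max} \le \frac{2K+d}{d}\cdot\frac{d}{K}$-type estimate should follow by testing the local projection against $v_K|_{\omega_i}$ and controlling how much the weighted projection can inflate the norm, exploiting $\|C_i v_K\|_{\varphi_i} \le \|v_K\|_{\varphi_i}$ as a first crude bound and then refining; the lower bound is the delicate direction and presumably uses that $v_K$, having degree $K$, is locally close to $\varphi_i$ times something of degree $K-1$ on each patch (cf.\ the space decomposition \eqref{eq:space-decomp}), so that $C_i$ cannot annihilate too much of it — this is where the constant $K/(2K+d)$ enters via a combinatorial/scaling argument on the reference simplex comparing integrals of products of barycentric coordinates. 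I would expect this to be carried out by reduction to the reference simplex and an explicit (dimension- and degree-dependent) computation with monomials in barycentric coordinates, which is exactly why the paper defers \ref{itm:verify-C2} to the later Subsections~\ref{sec:upper-eigenv-bound}--\ref{sec:CompLowerBound}.
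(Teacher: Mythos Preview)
Your proposal is essentially correct and follows the paper's approach: items~\ref{itm:verify-C1}, \ref{itm:verify-C3}, \ref{itm:verify-C4}, \ref{itm:Lk-1_id} are verified by exactly the computations you give (the paper's proof of~\ref{itm:verify-C1} is your identity $\skp{Cu}{w}=\sum_i\skp{C_iu}{w}_{\varphi_i}=\sum_i\skp{u}{C_iw}_{\varphi_i}=\skp{u}{Cw}$, and~\ref{itm:Lk-1_id} is proved in Lemma~\ref{lem:K-1} precisely via $C_iv=v|_{\omega_i}$ and the partition of unity), while~\ref{itm:verify-C2} is indeed deferred to Subsections~\ref{sec:upper-eigenv-bound}--\ref{sec:CompLowerBound}.

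One clarification on your discussion of~\ref{itm:verify-C2}: your ``crude bound'' $\|C_iv_K\|_{\varphi_i}\le\|v_K\|_{\varphi_i}$ is not crude at all---summing it over $i$ gives $\skp{Cv_K}{v_K}\le\|v_K\|_2^2$ directly, i.e.\ $\lambda_{\max}(C|_{\mathcal{L}^1_K})\le 1$, and this is \emph{sharp} (equality on $\mathcal{L}^1_{K-1}$ by~\ref{itm:Lk-1_id}); see Lemma~\ref{lem:UpperBound}. So there is nothing to refine on the upper side, and the entire constant $(2K+d)/K$ comes from the lower bound $\lambda_{\min}\ge K/(2K+d)$. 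The paper obtains this exactly along the lines you anticipate: it constructs explicit decomposition operators $\frD_i\colon\mathcal{L}^1_K(\mathcal{T})\to\mathcal{L}^1_{K-1}(\omega_i)$ with $v_K=\sum_i\varphi_i\frD_iv_K$ (realizing the space decomposition~\eqref{eq:space-decomp}), proves the local estimate $\sum_i\|\frD_iv_K\|_{\varphi_i}^2\le K_1\|v_K\|_2^2$ with $K_1=(2K+d)/K$ by an eigenvalue computation in barycentric monomials on the reference simplex (Proposition~\ref{prop:eigenvaluesS}), and then converts this into $\lambda_{\min}\ge 1/K_1$ via Cauchy--Schwarz (Lemma~\ref{lem:lower-bound}).
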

\begin{proof} 
Let us prove \ref{itm:verify-C1}, \ref{itm:verify-C3}, \ref{itm:verify-C4} and postpone the proof of \ref{itm:verify-C2} to Proposition~\ref{prop:lower-bound-comp} and \ref{itm:Lk-1_id} to Lemma~\ref{lem:K-1} below. 
\begin{enumerate}
\item[\ref{itm:verify-C1}] 
By definition the operator $C$ is linear and for any $u,w \in L^2(\Omega)$ one has that
  \begin{align*}
    \skp{C u}{w}
    &
    = \sum_{i\in \vertices} \skp{C_i u}{w}_{\varphi_i}
    = \sum_{i \in \vertices} \skp{u}{C_i w}_{\varphi_i}
      = \skp{u}{C w}.
  \end{align*}
\item[\ref{itm:verify-C3}] 
By definition the distance $\delta$ arises from the vertex based neighborhood notion and is a geodesic distance function as specified in \ref{itm:distance}. 
\item[\ref{itm:verify-C4}]  
By design of~$C$ and $\delta$ locality as in~\ref{itm:locality} is satisfied.  
\end{enumerate} 
\end{proof} 

The remaining property~\ref{itm:Lk-1_id} is proved in Subsection~\ref{sec:upper-eigenv-bound}, and~\ref{itm:verify-C2} is proved in the Subsections~\ref{sec:upper-eigenv-bound}--\ref{sec:CompLowerBound}. 

\begin{remark}[Operator~$C$ of Bank and Yserentant]
  \label{rem:comparison}
Let us explain the construction of the operator~$C$ used in~\cite{BanYse14} here denoted by~$\CBY$.
The local operators $\CBY_i$ are defined as the $\skp{\cdot}{\cdot}$-orthogonal projection to~$\varphi_i \mathcal{L}^1_{K-1}(\omega_i)$ for $i \in \vertices$. 
Note that if $\omega_i$ is an inner patch, then $\varphi_i \mathcal{L}^1_{K-1}(\omega_i)$ is the space of functions in $\mathcal{L}^1_K(\omega_i)$ vanishing on the boundary of~$\omega_i$. 
The operator $\CBY \colon L^2(\Omega)\to \mathcal{L}_K^1(\mathcal{T})$ reads
  \begin{align*}
    \CBY \coloneqq \sum_{i\in \vertices} \CBY_i.
  \end{align*}
Let us summarize the basic difference between the approaches. 
Bank and Yserentant use the unweighted $\skp{\cdot}{\cdot}$-projection to~$\varphi_i\mathcal{L}^1_{K-1}(\omega_i)$. 
In contrast, we use the weighted $\skp{\cdot}{\cdot}_{\varphi_i}$ projection to~$\mathcal{L}^1_{K-1}(\omega_i)$ and then multiply by~$\varphi_i$.

In order to improve the condition number of~$\CBY$ Bank and Yserentant orthogonally decompose the space~$\mathcal{L}^1_K(\mathcal{T})$ into $\mathcal{S}_0 + \mathcal{S}_1$, where $\mathcal{S}_1 \coloneqq \sum_{T \in \mathcal{T}}\mathcal{L}_{K,0}(T)$ with $\mathcal{L}_{K,0}(T) \coloneqq \set{v_K \in \mathcal{L}_K(T) \colon v_K|_{\partial T} =0}$. 
Let $Q_0$ and $Q_1$ denote the $\skp{\cdot}{\cdot}$-orthogonal projection to~$\mathcal{S}_0$ and $\mathcal{S}_1$, respectively. 
Since the operator~$Q_1$ acts locally, only the condition number of~$Q_0$ has to be investigated. 
Naturally, the condition number is smaller than the one of~$Q$. 
The projection and decomposition operators under consideration are preconditioned from both sides by $\identity -Q_1$.

For $H^1$-stability Bank and Yserentant need~$q< \frac12$ with $q$ as in~\eqref{eq:ConvSpeed2} subject to a grading $\gamma_h = 2$ (see Definition \ref{def:grading-cont} and \ref{def:MeshSizeFunction} for the definition of grading). 
 With the orthogonal space decomposition for $d=1$ one has that $q< \frac 12$ for all~$K \in \setN$, which motivates the decomposition. 
For $d=2$ this method increases the range of admissible~$K$ from $K=1,\dots, 8$ to $K=1, \dots, 12$.

Table \ref{tab:ComparBY} displays bounds on $q$ by Bank and Yserentant.
It indicates that in higher space dimensions the gain from the orthogonal decomposition is by far not as high as in the one-dimensional case. 
Moreover, the table compares the (numerically computed) values by Bank and Yserentant with the analytical estimates derived in Section \ref{dec:design-C}.
For $d\geq 2$ even the decomposition based ansatz leads to worse results than our analytical approach.
  
It is possible to improve our ansatz by the orthogonal decomposition used by Bank and Yserentant. 
However, numerical experiments indicate only a minor gain. 
	
	\begin{table}
	\centering
	\begin{TAB}[3pt]{|c|ccc|ccc|ccc|}{|c|c|cccccccccccccccc|}
		& &$d = 1$& & &$d=2$& & &$d=3$& \\
		$K$ & $q_\textup{BY}$ & $q_{\textup{pBY}}$ & $q_\textup{new}$ & $q_\textup{BY}$ & $q_{\textup{pBY}}$ & $q_\textup{new}$ &$q_\textup{BY}$ & $q_{\textup{pBY}}$ & $q_\textup{new}$\\
$	1	$&$	\highlight{	0.2679	}	$&$	\highlight{	0.2679	}	$&$	\highlight{	0.2679	}	$&$	\highlight{	0.3333	}	$&$	\highlight{	0.3333	}	$&$	\highlight{	0.3333	}	$&$	\highlight{	0.3820	}	$&$	\highlight{	0.3820	}	$&$	\highlight{	0.3820	}	$\\
$	2	$&$	\highlight{	0.2679	}	$&$	\highlight{	0.1716	}	$&$	\highlight{	0.2251	}	$&$	\highlight{	0.3564	}	$&$	\highlight{	0.3564	}	$&$	\highlight{	0.2679	}	$&$	\highlight{	0.4142	}	$&$	\highlight{	0.4142	}	$&$	\highlight{	0.3033	}	$\\
$	3	$&$	\highlight{	0.2419	}	$&$	\highlight{	0.1270	}	$&$	\highlight{	0.2087	}	$&$	\highlight{	0.3431	}	$&$	\highlight{	0.3150	}	$&$	\highlight{	0.2404	}	$&$	\highlight{	0.4930	}	$&$	\highlight{	0.4930	}	$&$	\highlight{	0.2679	}	$\\
$	4	$&$	\highlight{	0.2310	}	$&$	\highlight{	0.1010	}	$&$	\highlight{	0.2000	}	$&$	\highlight{	0.3375	}	$&$	\highlight{	0.2977	}	$&$	\highlight{	0.2251	}	$&$	\highlight{	0.4928	}	$&$	\highlight{	0.4829	}	$&$	\highlight{	0.2476	}	$\\
$	5	$&$	\highlight{	0.2289	}	$&$	\highlight{	0.0839	}	$&$	\highlight{	0.1946	}	$&$	\highlight{	0.3443	}	$&$	\highlight{	0.2809	}	$&$	\highlight{	0.2154	}	$&$		0.5112		$&$	\highlight{	0.4471	}	$&$	\highlight{	0.2344	}	$\\
$	6	$&$	\highlight{	0.2342	}	$&$	\highlight{	0.0718	}	$&$	\highlight{	0.1909	}	$&$	\highlight{	0.3529	}	$&$	\highlight{	0.2953	}	$&$	\highlight{	0.2087	}	$&$		0.5442		$&$	\highlight{	0.4727	}	$&$	\highlight{	0.2251	}	$\\
$	7	$&$	\highlight{	0.2491	}	$&$	\highlight{	0.0627	}	$&$	\highlight{	0.1883	}	$&$	\highlight{	0.3958	}	$&$	\highlight{	0.2922	}	$&$	\highlight{	0.2038	}	$&$		0.5878		$&$	\highlight{	0.4633	}	$&$	\highlight{	0.2183	}	$\\
$	8	$&$	\highlight{	0.2788	}	$&$	\highlight{	0.0557	}	$&$	\highlight{	0.1862	}	$&$	\highlight{	0.4350	}	$&$	\highlight{	0.3267	}	$&$	\highlight{	0.2000	}	$&$		0.6438		$&$		0.5064		$&$	\highlight{	0.2129	}	$\\
$	9	$&$	\highlight{	0.3284	}	$&$	\highlight{	0.0501	}	$&$	\highlight{	0.1847	}	$&$		0.5022		$&$	\highlight{	0.3342	}	$&$	\highlight{	0.1970	}	$&$		0.6978		$&$		0.5313		$&$	\highlight{	0.2087	}	$\\
$	10	$&$	\highlight{	0.3979	}	$&$	\highlight{	0.0455	}	$&$	\highlight{	0.1834	}	$&$		0.5664		$&$	\highlight{	0.3892	}	$&$	\highlight{	0.1946	}	$&$		0.7543		$&$		0.5966		$&$	\highlight{	0.2053	}	$\\
$	11	$&$	\highlight{	0.4805	}	$&$	\highlight{	0.0417	}	$&$	\highlight{	0.1823	}	$&$		0.6426		$&$	\highlight{	0.4116	}	$&$	\highlight{	0.1926	}	$&$		0.8021		$&$		0.6322		$&$	\highlight{	0.2024	}	$\\
$	12	$&$		0.5669		$&$	\highlight{	0.0385	}	$&$	\highlight{	0.1815	}	$&$		0.7074		$&$	\highlight{	0.4865	}	$&$	\highlight{	0.1909	}	$&$		0.8458		$&$		0.7044		$&$	\highlight{	0.2000	}	$\\
$	13	$&$		0.6493		$&$	\highlight{	0.0358	}	$&$	\highlight{	0.1807	}	$&$		0.7705		$&$		0.5255		$&$	\highlight{	0.1895	}	$&$		0.8803		$&$		0.7380		$&$	\highlight{	0.1979	}	$\\
$	14	$&$		0.7227		$&$	\highlight{	0.0334	}	$&$	\highlight{	0.1801	}	$&$		0.8201		$&$		0.6073		$&$	\highlight{	0.1883	}	$&$		0.9092		$&$		0.8001		$&$	\highlight{	0.1962	}	$\\
          \vdots & \vdots&\vdots&\vdots&\vdots&\vdots&\vdots&\vdots&\vdots&\vdots
          \\
$	\infty	$&$		\infty		$&$	\highlight{0}	$&$	\highlight{	0.1716	}	$&$		\infty		$&$		\infty		$&$	\highlight{	0.1716	}	$&$		\infty		$&$		\infty		$&$	\highlight{	0.1716	}	$
	\end{TAB}
	\caption{Values of $q$ in \eqref{eq:ConvSpeed2} for non-preconditioned Bank--Yserentant ($q_\textup{BY}$), preconditioned Bank--Yserentant ($q_\textup{pBY}$) and our (analytical) ansatz in Section \ref{dec:design-C} ($q_\textup{new}$) defined in \eqref{eq:qnew}. 
	We have recalculated the values of \cite{BanYse14} for a larger range here.} 
	\label{tab:ComparBY}
	\end{table}
\end{remark}

\subsection{Upper Bound on Eigenvalues}\label{sec:upper-eigenv-bound}
In this subsection we present further properties of~$C$ leading to an upper bound on its eigenvalues on $\mathcal{L}^1_K(\tria)$. 
\begin{lemma}[Behavior of~$C$ on~$\mathcal{L}^1_{K-1}(\mathcal{T})$]\ 
  \label{lem:K-1}
  \begin{enumerate}
  \item
    \label{itm:K-1a}
    The operator~$C$ is the identity on $\mathcal{L}^1_{K-1}(\mathcal{T})$;
  \item \label{itm:K-1b}
    The difference~$u -Cu$ is orthogonal to $\mathcal{L}^1_{K-1}(\mathcal{T})$
    for all $u \in L^2(\Omega)$.
  \end{enumerate}
\end{lemma}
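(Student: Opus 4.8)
The plan is to prove the two parts of Lemma~\ref{lem:K-1} in a single stroke, since each follows from the defining property~\eqref{eq:Pjweighted} of the local projections~$C_i$ together with the partition of unity $\sum_{i\in\vertices}\varphi_i = 1$. Throughout let $w_{K-1}\in\mathcal{L}^1_{K-1}(\mathcal{T})$ be an arbitrary test function; note that its restriction $w_{K-1}|_{\omega_i}$ lies in $\mathcal{L}^1_{K-1}(\omega_i)$, so it is an admissible test function for each local projection.

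For part~\ref{itm:K-1b}, I would compute $\skp{u-Cu}{w_{K-1}}$ by inserting the partition of unity into the $L^2$-inner product: $\skp{u}{w_{K-1}} = \sum_{i\in\vertices}\skp{\varphi_i u}{w_{K-1}} = \sum_{i\in\vertices}\skp{u}{w_{K-1}}_{\varphi_i}$, and likewise $\skp{Cu}{w_{K-1}} = \sum_{i\in\vertices}\skp{\varphi_i C_i u}{w_{K-1}} = \sum_{i\in\vertices}\skp{C_i u}{w_{K-1}}_{\varphi_i}$. Now apply~\eqref{eq:Pjweighted} termwise with $v_{K-1} = w_{K-1}|_{\omega_i}$: each summand $\skp{C_i u}{w_{K-1}}_{\varphi_i}$ equals $\skp{u}{w_{K-1}}_{\varphi_i}$. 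Hence the two sums coincide and $\skp{u-Cu}{w_{K-1}} = 0$, which is exactly~\ref{itm:K-1b}.

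For part~\ref{itm:K-1a}, suppose $u\in\mathcal{L}^1_{K-1}(\mathcal{T})$. Then in each patch $u|_{\omega_i}\in\mathcal{L}^1_{K-1}(\omega_i)$, so $C_i u = u|_{\omega_i}$ because $C_i$ is the $\skp{\cdot}{\cdot}_{\varphi_i}$-orthogonal projection onto that space and leaves it pointwise fixed. Therefore $Cu = \sum_{i\in\vertices}\varphi_i C_i u = \sum_{i\in\vertices}\varphi_i u = \big(\sum_{i\in\vertices}\varphi_i\big)u = u$, using the partition of unity in the last step. This gives~\ref{itm:K-1a} directly, and in fact part~\ref{itm:K-1a} can alternatively be deduced from~\ref{itm:K-1b}: if $u\in\mathcal{L}^1_{K-1}(\mathcal{T})$ then $u-Cu\in\mathcal{L}^1_K(\mathcal{T})$ is orthogonal to $\mathcal{L}^1_{K-1}(\mathcal{T})$ by~\ref{itm:K-1b}, but one needs to know $u-Cu$ itself lies in $\mathcal{L}^1_{K-1}(\mathcal{T})$ to test against it, which is not immediate, so I would keep the direct argument for~\ref{itm:K-1a}.

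I do not anticipate a genuine obstacle here; the only point requiring a little care is the bookkeeping that $w_{K-1}|_{\omega_i}$ is a legitimate test function in $\mathcal{L}^1_{K-1}(\omega_i)$ and that the weighted inner product $\skp{\cdot}{\cdot}_{\varphi_i}$ on the local space is compatible with the global $L^2$-inner product once multiplied by $\varphi_i$ and summed — precisely the identity $\skp{\varphi_i v}{w} = \skp{v}{w}_{\varphi_i}$ used above. Everything else is a direct consequence of linearity, the partition of unity, and the variational characterization~\eqref{eq:Pjweighted}.
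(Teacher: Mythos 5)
Your proof is correct and follows essentially the same route as the paper: part~\ref{itm:K-1a} is proved identically (each $C_i$ fixes $\mathcal{L}^1_{K-1}(\omega_i)$ by~\eqref{eq:Pjweighted}, then sum with the partition of unity), and for part~\ref{itm:K-1b} your direct computation is just the paper's self-adjointness calculation inlined, whereas the paper cites self-adjointness of $C$ together with~\ref{itm:K-1a}. Your remark that~\ref{itm:K-1a} does not follow from~\ref{itm:K-1b} alone is a correct and sensible observation.
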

\begin{proof}
  By~\eqref{eq:Pjweighted} it follows that~$C_i$ is the identity on~$\mathcal{L}^1_{K-1}(\omega_i)$. 
  Since $\sum_{i \in \vertices}\varphi_i=1$, this proves~\ref{itm:K-1a}. 
 The claim in~\ref{itm:K-1b} follows using~\ref{itm:K-1a} and the fact that $C$ is self-adjoint.
\end{proof}
Let us compute the largest eigenvalue $\lambda_\textup{max}(C|_{\mathcal{L}_K^1(\mathcal{T})})$ of $C$ restricted to $\mathcal{L}_K^1(\mathcal{T})$. 
\begin{lemma}[Upper bound]\label{lem:UpperBound}
  We have
  \begin{align*}
    \skp{C u}{u} &\leq \norm{u}_2^2 \qquad \text{ for all } u \in L^2(\Omega).
  \end{align*}
  The estimate is sharp and
  $\lambda_{\max}(C|_{\mathcal{L}^1_K(\mathcal{T})}) = 1 =
  \lambda_\textup{max}(C)$.
\end{lemma}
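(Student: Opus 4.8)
The plan is to prove the two assertions separately: first the inequality $\skp{Cu}{u} \le \norm{u}_2^2$ for all $u \in L^2(\Omega)$, and then that it is sharp and that the maximal eigenvalue of $C$ on $\mathcal{L}^1_K(\mathcal{T})$ (and on all of $L^2$) equals $1$. For the inequality, I would unfold the definition $C = \sum_{i\in\vertices}\varphi_i C_i$ and write $\skp{Cu}{u} = \sum_{i\in\vertices}\skp{C_i u}{u}_{\varphi_i}$. Since $C_i$ is the $\skp{\cdot}{\cdot}_{\varphi_i}$-orthogonal projection onto $\mathcal{L}^1_{K-1}(\omega_i)$, it is a contraction in the $\skp{\cdot}{\cdot}_{\varphi_i}$-seminorm, so by Cauchy--Schwarz and idempotency $\skp{C_i u}{u}_{\varphi_i} = \skp{C_i u}{C_i u}_{\varphi_i} \le \skp{u}{u}_{\varphi_i} = \skp{\varphi_i u}{u}$. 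Summing over $i$ and using the partition of unity $\sum_{i\in\vertices}\varphi_i = 1$ gives $\skp{Cu}{u} \le \sum_{i}\skp{\varphi_i u}{u} = \skp{u}{u} = \norm{u}_2^2$. (One must note $\varphi_i \ge 0$ so that $\skp{\cdot}{\cdot}_{\varphi_i}$ is a genuine positive semidefinite form, which holds since the $\varphi_i$ are the hat functions.)

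For sharpness and the eigenvalue claim, the key observation is Lemma~\ref{lem:K-1}\ref{itm:K-1a}: $C$ is the identity on $\mathcal{L}^1_{K-1}(\mathcal{T})$. Hence for any nonzero $v \in \mathcal{L}^1_{K-1}(\mathcal{T}) \subset \mathcal{L}^1_K(\mathcal{T})$ we have $\skp{Cv}{v} = \norm{v}_2^2$, so the bound is attained; in particular $\lambda_{\max}(C|_{\mathcal{L}^1_K(\mathcal{T})}) \ge 1$. Combined with the upper bound $\skp{Cu}{u}\le\norm u_2^2$ applied to $u\in\mathcal{L}^1_K(\mathcal{T})$ (which gives $\lambda_{\max}(C|_{\mathcal{L}^1_K(\mathcal{T})})\le 1$) and with the analogous argument on all of $L^2(\Omega)$ (the restriction $C|_{\mathcal{L}^1_{K-1}}=\identity$ forces $\lambda_{\max}(C)\ge 1$, while the inequality forces $\lambda_{\max}(C)\le 1$), we conclude $\lambda_{\max}(C|_{\mathcal{L}^1_K(\mathcal{T})}) = 1 = \lambda_{\max}(C)$. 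Since $C$ is self-adjoint by Theorem~\ref{thm:propCd}\ref{itm:verify-C1}, these operator norms are genuine eigenvalues.

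I do not anticipate a serious obstacle here; the main point requiring care is the contraction property $\skp{C_i u}{u}_{\varphi_i}\le\skp{u}{u}_{\varphi_i}$, which relies on $C_i$ being an \emph{orthogonal} projection with respect to the \emph{same} inner product $\skp{\cdot}{\cdot}_{\varphi_i}$ in which we measure — this is exactly what \eqref{eq:Pjweighted} encodes, so $\skp{C_iu}{u}_{\varphi_i}=\skp{C_iu}{C_iu}_{\varphi_i}=\norm{C_iu}^2_{\varphi_i}\le\norm{u}^2_{\varphi_i}$. A secondary subtlety is that $\skp{\cdot}{\cdot}_{\varphi_i}$ is only a semi-inner product (it degenerates where $\varphi_i$ vanishes), but positive semidefiniteness is all that is used, so this causes no difficulty. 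The statement that the estimate is "sharp" should be read as: equality holds for a nontrivial class of functions, namely all of $\mathcal{L}^1_{K-1}(\mathcal{T})$, which the identity-on-$\mathcal{L}^1_{K-1}$ property delivers immediately.
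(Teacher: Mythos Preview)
Your proof is correct and follows essentially the same approach as the paper: expand $\skp{Cu}{u}=\sum_i\skp{C_iu}{u}_{\varphi_i}=\sum_i\skp{C_iu}{C_iu}_{\varphi_i}\le\sum_i\skp{u}{u}_{\varphi_i}=\norm{u}_2^2$ using that $C_i$ is a $\skp{\cdot}{\cdot}_{\varphi_i}$-orthogonal projection and $\sum_i\varphi_i=1$, then invoke Lemma~\ref{lem:K-1} for sharpness. Your additional remarks on the semidefiniteness of $\skp{\cdot}{\cdot}_{\varphi_i}$ and on self-adjointness are sound but not strictly needed beyond what the paper uses.
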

\begin{proof}
  Since $C_i$ is a projection with respect to $\langle\cdot,\cdot\rangle_{\varphi}$, for all $u \in L^2(\Omega)$ we find 
  \begin{align}\label{est:upper-bd}
    \skp{Cu}{u}
    &= \sum_{i\in \vertices} \skp{C_i u}{u}_{\varphi_i} = \sum_{i\in \vertices}
      \skp{C_i u}{C_i u}_{\varphi_i}
    \leq \sum_{i\in \vertices} \skp{u}{u}_{\varphi_i} = \norm{u}_2^2.
  \end{align}
  Since $C$ is self-adjoint, we have that
  \begin{align*}
    \lambda_\textup{max}(C) = \sup_{u\in L^2(\Omega)\setminus \lbrace
    0 \rbrace} \frac{\langle C u,u \rangle}{\lVert u\rVert_2^2} \leq 1.
  \end{align*} 
  By Lemma~\ref{lem:K-1} the operator~$C$ is the identity on~$\mathcal{L}_{K-1}^1(\mathcal{T})$. 
  Hence, the estimate is sharp.
\end{proof}

\subsection{Lower Bound on Eigenvalues using Decomposition Operators}
\label{sec:lower-eigenv-bound}\hfill

\noindent To show ellipticity \ref{itm:ellipticity} we have to bound the largest and the smallest eigenvalue of $C$. 
Since Lemma \ref{lem:UpperBound} shows that the largest eigenvalue equals one, it remains to derive a lower bound on the smallest eigenvalue $\lambda_\textup{min}(C|_{\mathcal{L}^1_K(\mathcal{T})})$. 
Similarly as Bank and Yserentant we estimate the smallest eigenvalue by means of a family of decomposition
operators. 

Suppose that we have operators $\frD_i \colon \mathcal{L}^1_K(\mathcal{T}) \to \mathcal{L}^1_{K-1}(\omega_i)$ decomposing functions in~$\mathcal{L}^1_K(\mathcal{T})$  according to the function space decomposition in~\eqref{eq:space-decomp}, i.e.,
\begin{align}\label{eq:decompProperty}
  v_K = \sum_{i\in \vertices} \varphi_i \frD_i v_K\qquad\text{for all }v_K\in \mathcal{L}_K^1(\mathcal{T}).
\end{align}
Note that strictly speaking the products $\varphi_i \frD_i \colon \mathcal{L}^1_K(\tria) \to \mathcal{L}^1_K(\tria)$ are the decomposition operators.
Such decomposition operators  are constructed in Section~\ref{sec:LocDecomp} below. 
Let $K_1$ be a constant such that for all $v_K \in \mathcal{L}^1_K(\mathcal{T})$ one has that
\begin{align}
  \label{eq:K1tilde}
  \sum_{i\in \vertices}\int_\Omega \varphi_i\, |\frD_i v_K|^2 \dx
  &\leq 
    K_1 \norm{v_K}_2^2.
\end{align}
\begin{lemma}[Lower bound]
  \label{lem:lower-bound}
  Suppose that there exist decomposition operators $\frD_i$ with \eqref{eq:decompProperty} and a constant $K_1$ as in \eqref{eq:K1tilde}. Then one has that
  \begin{align*}
    \frac{1}{K_1} \leq \lambda_{\min}(C|_{\mathcal{L}_K^1(\mathcal{T})}).
  \end{align*}
\end{lemma}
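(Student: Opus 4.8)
The goal is to show that if the decomposition operators $\frD_i$ exist with the stability bound \eqref{eq:K1tilde}, then $\lambda_{\min}(C|_{\mathcal{L}^1_K(\mathcal{T})}) \geq 1/K_1$. Since $C$ is self-adjoint and positive (as a sum of weighted projections), it suffices to produce the lower bound $\skp{Cv_K}{v_K} \geq \frac{1}{K_1}\norm{v_K}_2^2$ for every $v_K \in \mathcal{L}^1_K(\mathcal{T})$, and then use the variational characterization of $\lambda_{\min}$ over the finite-dimensional space $\mathcal{L}^1_K(\mathcal{T})$. The natural route is the standard additive Schwarz / subspace-correction lower bound argument: test the decomposition \eqref{eq:decompProperty} against $v_K$ itself and exploit that $C_i$ is the $\skp{\cdot}{\cdot}_{\varphi_i}$-orthogonal projection onto $\mathcal{L}^1_{K-1}(\omega_i)$, which contains $\frD_i v_K$.

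\textbf{Key steps.} First, I would write, using \eqref{eq:decompProperty} and bilinearity,
\begin{align*}
  \norm{v_K}_2^2 = \skp{v_K}{v_K} = \sum_{i\in\vertices} \skp{v_K}{\varphi_i \frD_i v_K} = \sum_{i\in\vertices}\skp{v_K}{\frD_i v_K}_{\varphi_i}.
\end{align*}
Next, since $\frD_i v_K \in \mathcal{L}^1_{K-1}(\omega_i)$ and $C_i$ reproduces this space while $v_K - C_i v_K$ is $\skp{\cdot}{\cdot}_{\varphi_i}$-orthogonal to it (by \eqref{eq:Pjweighted}), replace $\skp{v_K}{\frD_i v_K}_{\varphi_i} = \skp{C_i v_K}{\frD_i v_K}_{\varphi_i}$. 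Then apply Cauchy--Schwarz in the $\skp{\cdot}{\cdot}_{\varphi_i}$ inner product to each summand and then the discrete Cauchy--Schwarz over $i \in \vertices$:
\begin{align*}
  \norm{v_K}_2^2 \leq \Bigl(\sum_{i\in\vertices}\skp{C_i v_K}{C_i v_K}_{\varphi_i}\Bigr)^{1/2}\Bigl(\sum_{i\in\vertices}\skp{\frD_i v_K}{\frD_i v_K}_{\varphi_i}\Bigr)^{1/2}.
\end{align*}
The first factor is exactly $\skp{Cv_K}{v_K}^{1/2}$ by the computation \eqref{est:upper-bd} (the identity $\skp{Cu}{u} = \sum_i \skp{C_i u}{C_i u}_{\varphi_i}$), and the second factor is bounded by $(K_1 \norm{v_K}_2^2)^{1/2}$ by \eqref{eq:K1tilde} since $\skp{\frD_i v_K}{\frD_i v_K}_{\varphi_i} = \int_\Omega \varphi_i |\frD_i v_K|^2\dx$. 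Dividing through by $\norm{v_K}_2$ and squaring yields $\norm{v_K}_2^2 \leq \skp{Cv_K}{v_K}\cdot K_1$, i.e. $\skp{Cv_K}{v_K} \geq \frac{1}{K_1}\norm{v_K}_2^2$, which gives the claim.

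\textbf{Main obstacle.} There is no serious analytic obstacle here — the argument is the textbook lower eigenvalue bound for additive subspace corrections, and all the needed identities are already available in the excerpt. The only points requiring a little care are bookkeeping ones: making sure that $C_i v_K$ and $\frD_i v_K$ both genuinely lie in $\mathcal{L}^1_{K-1}(\omega_i)$ so that the projection identity \eqref{eq:Pjweighted} applies to the pair, confirming that the $\skp{\cdot}{\cdot}_{\varphi_i}$ form is a genuine inner product on $\mathcal{L}^1_{K-1}(\omega_i)$ (which holds because $\varphi_i > 0$ on the interior of $\omega_i$), and handling the degenerate case $v_K = 0$ separately before dividing. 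The substantive work — actually constructing the $\frD_i$ and bounding $K_1$ — is deferred to Section~\ref{sec:LocDecomp}, so it plays no role in this proof.
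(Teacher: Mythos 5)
Your argument is exactly the paper's proof: the same chain $\norm{v_K}_2^2 = \sum_i \skp{\frD_i v_K}{C_i v_K}_{\varphi_i}$ via the decomposition and the projection property of $C_i$, followed by the two Cauchy--Schwarz steps, the identity $\skp{Cv_K}{v_K} = \sum_i \skp{C_i v_K}{C_i v_K}_{\varphi_i}$, and the bound \eqref{eq:K1tilde}. Correct and complete; nothing to add.
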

\begin{proof}
  Since $C_i$ is a projection with respect to $\langle \cdot,\cdot\rangle_{\varphi_i}$, all $u\in L^2(\Omega)$ satisfy that
  \begin{align}\label{eq:proofLowerBound}
    \skp{Cu}{u} &= \sum_{i\in \vertices} \skp{\varphi_i C_i u}{u}= \sum_{i\in \vertices} \skp{C_i u}{C_i u}_{\varphi_i}.
  \end{align}
  Let $v_K\in \mathcal{L}_K^1(\mathcal{T})$, then \eqref{eq:decompProperty}, \eqref{eq:proofLowerBound} and
  \eqref{eq:K1tilde} yield that
  \begin{align*}
    \norm{v_K}_2^2  &=
    \sum_{i\in \vertices} \skp{\varphi_i \frD_i v_K}{v_K}                                    
    = \sum_{i\in \vertices} \skp{\frD_i v_K}{v_K}_{\varphi_i}
      = \sum_{i\in \vertices} \skp{\frD_i v_K}{C_iv_K}_{\varphi_i}
    \\
     &\leq 
   \sum_{i \in \vertices} \left( \skp{\frD_i v_K}{\frD_i v_K}_{\varphi_i}^{1/2}  \skp{C_i v_K}{C_i v_K}_{\varphi_i}^{1/2} 
   \right)\\
     &\leq 
  \big( \sum_{i \in \vertices}  \skp{\frD_i v_K}{\frD_i v_K}_{\varphi_i}\big)^{1/2} 
    \big( \sum_{\ell \in \vertices}  \skp{C_\ell v_K}{C_\ell v_K}_{\varphi_\ell}\big)^{1/2}  \\ 
    &\leq \sqrt{K_1}\,
     \norm{v_K} _2 \sqrt{\skp{Cv_K}{v_K}}.
  \end{align*}
  This proves that $\norm{v_K}_2^2 \leq K_1\,  \skp{Cv_K}{v_K}$ for all $v_K \in \mathcal{L}^1_{K}(\tria)$. 
  Since $C$ is self-adjoint the claim follows.
\end{proof}

\subsection{Construction of the Decomposition Operators}\label{sec:LocDecomp}
In this subsection we construct the decomposition operators $\frD_i\colon \mathcal{L}^1_K(\mathcal{T})\to
\mathcal{L}^1_{K-1}(\omega_i)$ satisfying~\eqref{eq:decompProperty}. 
In combination with Lemma~\ref{lem:UpperBound} and \ref{lem:lower-bound} this allows us to verify condition~\ref{itm:ellipticity}.

We design the operators locally. 
That is, for fixed~$i \in \vertices$ we define $\frD_i$ locally on each~$T \in \omega_i$. 
Doing so we only have to ensure that the local objects are continuous on the boundaries of each~$T \in \omega_i$, such that~$\frD_i$ maps to~$\mathcal{L}^1_{K-1}(\omega_i)$.

An alternative view is to fix~$T \in \mathcal{T}$ and define~$\frD_i$ on~$T$ for those $i \in \vertices$ with $i \in T$. 
There are exactly $d+1$ such vertices that we relabel for simplicity by $j=0,\dots, d$. 
We denote these local operators by $\frD_j^T \colon \mathcal{L}_K(T) \to \mathcal{L}_{K-1}(T)$.
For the fixed $T \in \mathcal{T}$ let $\lambda_j$ denote barycentric coordinate that corresponds to the $j$-th vertex of $T$ for all $j=0,\dots,d$.
We define the monomial $\lambda^\sigma \coloneqq \lambda_0^{\sigma_0}\cdots\lambda_d^{\sigma_d}$ for all multi indices $\sigma = (\sigma_0,\dots,\sigma_d) \in \mathbb{N}_0^{d+1}$. 
Let us denote the Lagrange nodes of~$\mathcal{L}^1_K(T)$ by $x_\alpha$ with $\alpha \in \setN_0^{d+1}$ and $\abs{\alpha}=K$.
Let $e_j \in \mathbb{R}^{d+1}$ with $j=0,\dots,d$ denote the $j$-th canonical basis vector (we start counting from zero). 
With $\lambda_j(x_\alpha) = \alpha_j/K$, for $\abs{\alpha} = K$ we set
\begin{align}
  \label{eq:DefMononialDecomp2}
  \begin{aligned}
    \frD_j^T(\lambda^\alpha) & \coloneqq \frac{\alpha_j}{K}
    \lambda^{\alpha-e_j} =
    \begin{cases}
      \lambda_j(x_\alpha) \lambda^{\alpha-e_j} &\qquad \text{if
        $\alpha_j>0$},
      \\
      0 &\qquad \text{if $\alpha_j=0$,}
    \end{cases}
  \end{aligned}
\end{align}
with $0 \cdot \lambda^{\alpha - e_j} = 0$ by convention.
Then we obtain that
\begin{align}
  \label{eq:DefMononialDecomp}
  \lambda_j \frD_j^T(\lambda^\alpha) = \lambda_j(x_\alpha)
  \lambda^\alpha \qquad \text{for } j=0,\ldots, d\, \text{ and } \abs{\alpha}=K.
\end{align}
For any $i \in \vertices \cap T$ and $j \in \lbrace 0,\dots,d\rbrace$ being the local index of the vertex $i$ on $T$ we obtain the global operator mapping to $\mathcal{L}^0_{K-1}(\omega_i)$ by setting
\begin{align}\label{eq:def-Di-global}
(\frD_i v)|_T \coloneqq \frD_j^T(v|_T) \quad \text{ for all }  v \in \mathcal{L}^1_K(\tria).
\end{align}
As we use local coordinates to define~$\frD_j^T$, it is
universal and ``independent'' of~$T$.
Note that $\lbrace \lambda^\alpha \colon \alpha\in \mathbb{N}_0^{d+1}$ with $|\alpha| = K\rbrace$ forms a basis of the polynomial space $\mathcal{L}_K(T)$ on $T$ with maximal degree $K\in \mathbb{N}$. 
Hence, the identities in \eqref{eq:DefMononialDecomp2} define linear decomposition operators $\frD_j^T$ mapping from $\mathcal{L}_K(T)$ to $\mathcal{L}_{K-1}(T)$ for all $j=0,\dots,d$.
By~\eqref{eq:DefMononialDecomp} it follows immediately that
\begin{align*}
  \sum_{j=0}^d \lambda_j \frD_j^T(\lambda^\alpha) = \sum_{j=0}^d
   \lambda_{j}(x_\alpha) \lambda^\alpha = \lambda^\alpha.
\end{align*}
Therefore, the operators $\frD_j^T$ form a local decomposition in the sense that
\begin{align}
  \label{eq:local-decomp-DT}
  \sum_{j=0}^d \lambda_j \frD_j^Tv_K = v_K \qquad \text{for all $v_K
  \in \mathcal{L}_K(T)$}.
\end{align}

\begin{remark}[Integration formula for monomials]\label{rem:IntegrMonomials}
  For convenience let us recall for all multi indices $\sigma\in \mathbb{N}_0^{d+1}$ the formula
  \begin{align}\label{eq:integrationMonomials}
    \dashint_T \lambda^\sigma \dx =  \frac{1}{|T|} \int_T \lambda^\sigma \dx =
    \frac{\sigma!\,d!}{(|\sigma|+d)!} \coloneqq
    \frac{\sigma_0!\dots\sigma_d!\,d! }{(\sigma_0+\dots+\sigma_d+d)!}.
  \end{align}
\end{remark}
\begin{remark}[Motivation]
  \label{rem:motivation}
  The operators~$\frD_j^T$ are constructed by the following principles. 
  We need to ensure that the decomposition formula~\eqref{eq:decompProperty} holds locally. 
  Since the barycentric coordinates $\lambda_j$ are the
  restrictions of the $\varphi_i$, this is equivalent to
  \begin{align}
    \label{eq:motiv-constr} 
   \sum_{j=0}^d \lambda_j \frD_j^T(\lambda^\alpha) &= \lambda^\alpha \qquad\text{for all }\alpha\in \mathbb{N}_0^{d+1} \text{ with }\abs{\alpha}=K.
  \end{align}
  Let us choose the ansatz
  $\lambda_j \frD_j^T(\lambda^\alpha) \eqqcolon d_{j,\alpha}
  \lambda^\alpha$ for $d_{j,\alpha} \in \mathbb{R}$. 
  Since we aim for a small constant $K_1$ in \eqref{eq:K1tilde}, we choose the coefficients~$d_{j,\alpha} \in \mathbb{R}$ such that they minimize
  \begin{align}
    \label{eq:motiv-energy}
    \sum_{j=0}^d \int_T \lambda_j \abs{\frD_j^T (\lambda^\alpha)}^2\dx.
  \end{align}
  If $\alpha_j = 0$, the identity~\eqref{eq:motiv-constr} enforces~$d_{j,\alpha}=0$. 
  The remaining coefficients minimize
  \begin{align*}
    \sum_{j=0}^d d_{j,\alpha}^2 \int_T 
    \lambda^{2\alpha-e_j} \dx \quad \text{subject to the
    constraint} \quad
    \sum_{j=0}^d d_{j,\alpha} =1.
  \end{align*}
  This leads to the equivalent problem: 
  Seek coefficients $d_{j,\alpha}$ and a Lagrange multiplier $\mu\in \mathbb{R}$ with
    \begin{align}\label{eq:LagrangeMiulti1}
    2\, d_{j,\alpha} \frac{(2\alpha-e_j)!}{(2\abs{\alpha}+d-1)!} &= \mu\qquad\text{for all } j = 0,\dots,d\text{ with }\alpha_j >0.
  \end{align}
  By direct calculation we obtain that
  \begin{align*}
    2 &= 2\,\sum_{j=0}^d d_{j,\alpha} = \mu\, \sum_{j=0}^d
        \frac{(2\abs{\alpha}+d-1)!}{(2\alpha-e_j)!}
        = \mu\,\frac{(2\abs{\alpha}+d-1)!}{(2\alpha)!}
        \sum_{j=0}^d 2\alpha_j
    \\
    &= \mu\,\frac{2 \abs{\alpha}(2\abs{\alpha}+d-1)!}{(2\alpha)!}.
  \end{align*}
  Thus, with \eqref{eq:LagrangeMiulti1} we have that 
  \begin{alignat*}{2}
    \mu &=\frac{(2\alpha)!}{\abs{\alpha}(2\abs{\alpha}+d-1)!}
          \qquad \text{and} \qquad
    d_{j,\alpha} &= 
    \frac{\alpha_j}{\abs{\alpha}} = \frac{\alpha_j}{K}.
  \end{alignat*}
  This results in the identity~\eqref{eq:DefMononialDecomp}.
\end{remark}
So far we have defined the operator~$\frD_j^T$ only on monomials~$\lambda^\alpha$ with $\abs{\alpha}=K$. 
The next lemma shows how $\frD_j^T$ acts on monomials of lower order. 
\begin{lemma}[Lower order monomials]
\label{lem:Di-lower}
  For all $\gamma\in \mathbb{N}^{d+1}_0$ with $\abs{\gamma}\leq K$ we have
  \begin{align*}
    \lambda_j \frD_j^T (\lambda^\gamma) &= \frac{\gamma_j}{K} \lambda^\gamma
      +
      \frac{K-\abs{\gamma}}K \lambda_j \lambda^\gamma.
  \end{align*}
\end{lemma}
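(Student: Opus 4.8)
The natural approach is to reduce the lower-order monomial $\lambda^\gamma$ with $|\gamma| \leq K$ to a combination of top-degree monomials by multiplying with the partition of unity. Since $\sum_{k=0}^d \lambda_k = 1$ on $T$, raising this to the power $K - |\gamma|$ gives
\begin{align*}
  \lambda^\gamma = \lambda^\gamma \Bigl(\sum_{k=0}^d \lambda_k\Bigr)^{K-|\gamma|} = \sum_{|\beta| = K-|\gamma|} \binom{K-|\gamma|}{\beta} \lambda^{\gamma+\beta},
\end{align*}
a sum of monomials of degree exactly $K$. Applying the already-defined operator $\frD_j^T$ from \eqref{eq:DefMononialDecomp2} (which is linear) to each term and then multiplying by $\lambda_j$, I would use the key identity \eqref{eq:DefMononialDecomp}, namely $\lambda_j \frD_j^T(\lambda^\alpha) = \tfrac{\alpha_j}{K}\lambda^\alpha$ for $|\alpha| = K$, to get $\lambda_j \frD_j^T(\lambda^\gamma) = \sum_{|\beta|=K-|\gamma|} \binom{K-|\gamma|}{\beta} \tfrac{\gamma_j + \beta_j}{K} \lambda^{\gamma+\beta}$.

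The task then becomes a purely combinatorial identity: splitting the coefficient $\tfrac{\gamma_j+\beta_j}{K}$ as $\tfrac{\gamma_j}{K} + \tfrac{\beta_j}{K}$, the first piece sums (via the multinomial theorem backwards) to $\tfrac{\gamma_j}{K}\lambda^\gamma$, while the second piece, $\tfrac1K \sum_{|\beta| = K-|\gamma|} \binom{K-|\gamma|}{\beta} \beta_j \lambda^{\gamma+\beta}$, must be shown to equal $\tfrac{K-|\gamma|}{K}\lambda_j \lambda^\gamma$. For this I would use the standard identity $\binom{m}{\beta}\beta_j = m \binom{m-1}{\beta - e_j}$ (valid when $\beta_j \geq 1$, the only contributing terms), with $m = K-|\gamma|$; after reindexing $\beta \mapsto \beta + e_j$ the remaining sum is $\sum_{|\beta'| = m-1}\binom{m-1}{\beta'}\lambda^{\gamma+e_j+\beta'} = \lambda^{\gamma+e_j}(\sum_k \lambda_k)^{m-1} = \lambda_j\lambda^\gamma$, giving exactly the factor $\tfrac{m}{K}\lambda_j\lambda^\gamma = \tfrac{K-|\gamma|}{K}\lambda_j\lambda^\gamma$.

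An alternative, perhaps cleaner, route avoids the binomial bookkeeping: observe that both sides of the claimed identity are polynomial in the entries of $\gamma$ in a suitable sense, or more concretely verify the identity $\lambda_j \frD_j^T(\lambda^\gamma) = \tfrac{\gamma_j}{K}\lambda^\gamma + \tfrac{K-|\gamma|}{K}\lambda_j\lambda^\gamma$ by checking it is consistent with the degree-$K$ case and is compatible with multiplication by $\sum_k \lambda_k$ — i.e., prove it by downward induction on $|\gamma|$, using that $\frD_j^T$ is defined on $\mathcal{L}_K(T)$ and that $\lambda^\gamma = \sum_k \lambda_k \lambda^\gamma = \sum_k \lambda^{\gamma+e_k}$ reduces $|\gamma|$-level monomials to $(|\gamma|+1)$-level ones; one then just has to check that the proposed formula satisfies the same recursion, which is a short computation with the $\tfrac{\gamma_j}{K}$ and $\tfrac{K-|\gamma|}{K}$ coefficients.

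The main obstacle is simply organizing the combinatorial identity cleanly — there is no conceptual difficulty, but one must be careful about the edge cases $\gamma_j = 0$ (where the $\tfrac{\gamma_j}{K}\lambda^\gamma$ term vanishes, consistent with $\frD_j^T$ killing factors without $\lambda_j$) and $|\gamma| = K$ (where the second term vanishes and one recovers \eqref{eq:DefMononialDecomp} exactly), and about the convention $0 \cdot \lambda^{\alpha - e_j} = 0$ when reindexing. I expect the downward-induction argument to be the most economical to write out.
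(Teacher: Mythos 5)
Your first argument is exactly the paper's proof: expand $\lambda^\gamma = \lambda^\gamma(\sum_k\lambda_k)^{K-|\gamma|}$ into top-degree monomials, apply the defining identity $\lambda_j\frD_j^T(\lambda^\alpha)=\frac{\alpha_j}{K}\lambda^\alpha$, split the coefficient $\frac{\gamma_j+\beta_j}{K}$, and resum the $\beta_j$-part via $\binom{m}{\beta}\beta_j=m\binom{m-1}{\beta-e_j}$ and the shift $\beta\mapsto\beta-e_j$. The proposal is correct and takes essentially the same route, so no further comparison is needed.
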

\begin{proof}
  Let $\gamma \in \mathbb{N}^{d+1}_0$ with $k\coloneqq \abs{\gamma} \leq K$ and let $j=0,\dots,d$. 
Since 
  \begin{align}
    \label{eq:1tom}
    1 &= \Big(\sum_{j=0}^d \lambda_j\Big)^m = \sum_{\abs{\sigma}=m}
        \binom{\abs{\sigma}}{\sigma} \lambda^\sigma 
  \end{align}
  is satisfied for all $m\in \mathbb{N}_0$, we find that
  \begin{align}\label{eq:proofLowerOrderMon}
    \lambda_j \frD_j^T (\lambda^\gamma)
    &= \lambda_j \frD_j^T \bigg(\lambda^\gamma \sum_{\abs{\sigma}=K-k}
      \binom{\abs{\sigma}}{\sigma} \lambda^\sigma \bigg)
      = \sum_{\abs{\sigma}=K-k}
      \binom{\abs{\sigma}}{\sigma} \lambda_j \frD_j^T (\lambda^{\gamma+\sigma}).
  \end{align}
The definition of~$\frD_j^T$ shows that
\begin{align*}
\lambda_j \frD_j^T(\lambda^{\gamma + \sigma})
= \lambda_j(x_{\gamma + \sigma}) \lambda^{\gamma + \sigma}  = \frac{\gamma_j + \sigma_j}{K} \lambda^{\gamma + \sigma}.
\end{align*}
Applying this in \eqref{eq:proofLowerOrderMon}, using \eqref{eq:1tom} and the substitution $\overline{\sigma} = \sigma - e_j$ yield that
  \begin{align*}
    \lambda_j \frD_j^T (\lambda^\gamma)
    &= \frac{\gamma_j}{K} \sum_{\abs{\sigma}=K-k}
      \binom{\abs{\sigma}}{\sigma}
      \lambda^{\gamma+\sigma}
      +
      \frac 1K \sum_{\substack{\abs{\sigma}=K-k\\\sigma_j>0}}
    \binom{\abs{\sigma}}{\sigma} \sigma_j
    \lambda^{\gamma+\sigma}
    \\
    &= \frac{\gamma_j}{K} \lambda^\gamma
      +
      \frac{K-k}K \sum_{\substack{\abs{\bar{\sigma}}=K-k-1}}
      \binom{\abs{\bar{\sigma}}}{\bar{\sigma}}
      \lambda^{\gamma+\bar{\sigma}} \lambda_j
    \\
    &= \frac{\gamma_j}{K} \lambda^\gamma
      +
      \frac{K-k}K \lambda_j \lambda^\gamma.
  \end{align*}  
\end{proof}
So far we know in particular that~$\frD_i \colon \mathcal{L}^1_K(\mathcal{T}) \to
\mathcal{L}^0_{K-1}(\mathcal{T})$. 
From the local decomposition~\eqref{eq:local-decomp-DT} it follows that the global decomposition operators~$\frD_i$ from \eqref{eq:def-Di-global} satisfy
\begin{align}
  \label{eq:global-decomp-D-aux}
  \sum_{i\in \vertices} \varphi_i \frD_i (v_K) = v_K \qquad\text{for all } v_K\in
  \mathcal{L}_K^1(\mathcal{T}). 
\end{align}
It remains to prove~$\frD_i \colon \mathcal{L}^1_K(\mathcal{T}) \to \mathcal{L}^1_{K-1}(\mathcal{T})$ which is based on the following lemma. 
\begin{lemma}[Traces]
  \label{lem:ContDi}
  Let $T \in \mathcal{T}$ and $v_K \in
  \mathcal{L}_K^1(\mathcal{T})$. Then for any subsimplex~$S$ of~$T$
  with $j \in \mathcal{N}\cap S$ the restriction $(\frD_j^T v_K)|_S$ only depends on
  $v_K|_S$.
\end{lemma}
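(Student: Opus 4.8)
The plan is to reduce everything to the monomial basis and the explicit formula from Lemma~\ref{lem:Di-lower}. Fix $T \in \mathcal{T}$, a subsimplex $S$ of $T$, and a vertex $j \in \mathcal{N} \cap S$; after relabelling let $j=0$ and let $S$ be spanned by the vertices $0,1,\dots,m$ for some $m \le d$, so that $S = \{\lambda_{m+1} = \dots = \lambda_d = 0\}$. The restriction of a monomial $\lambda^\gamma$ to $S$ vanishes unless $\gamma_{m+1} = \dots = \gamma_d = 0$, i.e.\ unless $\gamma$ is "supported on $S$". Thus two polynomials in $\mathcal{L}_K(T)$ agree on $S$ if and only if, when expanded in the basis $\{\lambda^\gamma : |\gamma|=K\}$ (or more conveniently $\{\lambda^\gamma : |\gamma| \le K, \ \gamma \text{ supported on } S\}$ after using \eqref{eq:1tom} to homogenise), the coefficients of the $S$-supported monomials coincide. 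So it suffices to show that $\frD_0^T(\lambda^\gamma)|_S$ depends only on whether $\gamma$ is $S$-supported and, if so, only on $\gamma$ itself — equivalently, that $\frD_0^T$ maps $S$-supported monomials to $S$-supported polynomials and annihilates (modulo something vanishing on $S$) the non-$S$-supported ones.

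The key computation is to apply Lemma~\ref{lem:Di-lower} with $j=0$: for any $\gamma$ with $|\gamma| \le K$,
\begin{align*}
  \lambda_0 \frD_0^T(\lambda^\gamma) = \frac{\gamma_0}{K}\lambda^\gamma + \frac{K-|\gamma|}{K}\lambda_0 \lambda^\gamma.
\end{align*}
Now restrict to $S$. If $\gamma$ is not $S$-supported, then $\gamma_k > 0$ for some $k > m$, so $\lambda^\gamma|_S = 0$ and $(\lambda_0\lambda^\gamma)|_S = 0$; hence $(\lambda_0 \frD_0^T(\lambda^\gamma))|_S = 0$. Since $\lambda_0$ does not vanish identically on $S$ (as $0 \in S$, so $\lambda_0$ is a genuine barycentric coordinate on $S$) and $\frD_0^T(\lambda^\gamma)$ is a polynomial, the identity $\lambda_0 \cdot \frD_0^T(\lambda^\gamma) = 0$ on $S$ forces $\frD_0^T(\lambda^\gamma)|_S = 0$: indeed a polynomial on the simplex $S$ whose product with the nonzero-divisor $\lambda_0$ vanishes on $S$ must vanish on $S$ (one may divide in the polynomial ring, or argue pointwise on the dense set where $\lambda_0 \neq 0$). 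If instead $\gamma$ is $S$-supported, the same identity shows $\lambda_0 \frD_0^T(\lambda^\gamma)|_S$ is a polynomial expression purely in $\lambda_0,\dots,\lambda_m$ determined by $\gamma$, and dividing by $\lambda_0$ (again legitimate on $S$) gives that $\frD_0^T(\lambda^\gamma)|_S$ is determined by $\gamma$ and is $S$-supported.

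To conclude: write $v_K|_T = \sum_{|\gamma|=K} c_\gamma \lambda^\gamma$. By the above, $(\frD_0^T v_K)|_S = \sum_{|\gamma|=K} c_\gamma (\frD_0^T \lambda^\gamma)|_S = \sum_{\gamma \ S\text{-supported}} c_\gamma (\frD_0^T \lambda^\gamma)|_S$, and the coefficients $c_\gamma$ for $S$-supported $\gamma$ are exactly the data of $v_K|_S$ (since $\{\lambda^\gamma|_S : \gamma \ S\text{-supported}, |\gamma|=K\}$ is a basis of $\mathcal{L}_K(S)$ and the non-$S$-supported $\lambda^\gamma$ restrict to $0$). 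Hence $(\frD_0^T v_K)|_S$ depends only on $v_K|_S$, which is the claim. The main obstacle — really the only subtle point — is the legitimacy of "dividing by $\lambda_0$" on $S$: one must be a little careful that $\lambda_0|_S \not\equiv 0$ (true precisely because $j=0$ is a vertex of $S$, which is exactly the hypothesis $j \in \mathcal{N}\cap S$) and that cancelling a non-zero-divisor in the polynomial ring of $S$ is valid; everything else is bookkeeping with the monomial basis.
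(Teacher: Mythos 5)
Your proof is correct and follows essentially the same route as the paper's: expand $v_K$ in the monomial basis $\{\lambda^\alpha\}_{|\alpha|=K}$, observe that only the $S$-supported monomials survive restriction to $S$ and that these restrict to a basis of $\mathcal{L}_K(S)$, use $\lambda_j \frD_j^T(\lambda^\alpha)=\frac{\alpha_j}{K}\lambda^\alpha$, and divide by $\lambda_j$ on $S$, which is legitimate precisely because $j\in S$. The only cosmetic difference is that the paper first reduces via linearity to showing $v_K|_S=0 \Rightarrow (\frD_j^T v_K)|_S=0$, whereas you compute $(\frD_j^T v_K)|_S$ directly in terms of the $S$-supported coefficients; the substance is identical.
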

\begin{proof}
  Let~$S$ be a subsimplex of~$T\in \tria$ and let $j \in \mathcal{N}\cap S$. 
  Since $\frD_j^T$ is linear it suffices to show for $v_K \in \mathcal{L}^1_K(\tria)$, that $v_K|_S=0$ implies that $(\frD_j^T v_K)|_S=0$. 
  So let us assume that $v_K|_S=0$. 
  The local representation of $v_K$
  reads $v_K = \sum_{|\alpha | = K} v_\alpha \lambda^\alpha$ with coefficients $v_\alpha \in \mathbb{R}$. 
  Let $x_\alpha$ denote the Lagrange nodes corresponding to $\lambda^\alpha \in \mathcal{L}_K(T)$. 
  Since $\lambda^\alpha|_S =0$ for $x_\alpha \not \in S$, we have that
  \begin{align*}
    0 = v_K|_S = \sum_{|\alpha | = K, x_\alpha \in S} v_\alpha \lambda^\alpha|_S.
  \end{align*}
  Because the functions $\lambda^\alpha|_S$ with $x_\alpha \in S$ form a
  basis of $\mathcal{L}_K(S)$ we obtain
  $v_\alpha=0$ for all $x_\alpha \in S$.
  Since for each term in the sum $v_\alpha$ or $\lambda^\alpha|_S$ equals zero, we obtain 
  \begin{align*}
    (\lambda_j \frD_j^T v_K)|_S = \sum_{|\alpha | = K}v_\alpha
    \frac{\alpha_j}{K} \lambda^{\alpha}|_S=0.
  \end{align*}
  With $j \in S$,
  we have $\lambda_j>0$ almost everywhere on~$S$ and thus we can divide
  by~$\lambda_j$. 
  We obtain $ (\frD_j^T v_K)|_S = 0$ as desired.
\end{proof}
In summary we obtain that the global decomposition operators defined in \eqref{eq:def-Di-global} satisfy the desired properties as assumed in Section~\ref{sec:lower-eigenv-bound}.
\begin{proposition}[Decomposition]\label{prop:decoomp}
  The operators $\frD_i$ are linear and map from
  $\mathcal{L}^1_K(\mathcal{T})$ to
  $\mathcal{L}_{K-1}^1(\omega_i)$. Moreover, the operators $\varphi_i \frD_i$ map 
  $\mathcal{L}^1_K(\mathcal{T}) \to \mathcal{L}^1_K(\tria)$ and decompose $\mathcal{L}_K^1(\mathcal{T})$-functions in the sense that 
    \begin{align*}
    v_K = \sum_{i\in \vertices} \varphi_i \frD_i v_K\qquad\text{for all }v_K\in \mathcal{L}_K^1(\mathcal{T}).
  \end{align*} 
\end{proposition}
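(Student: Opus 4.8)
The statement of Proposition~\ref{prop:decoomp} collects three things: (a) each $\frD_i$ is linear and maps $\mathcal{L}^1_K(\mathcal{T})$ into $\mathcal{L}^1_{K-1}(\omega_i)$; (b) consequently the product $\varphi_i\frD_i$ maps $\mathcal{L}^1_K(\mathcal{T})$ into $\mathcal{L}^1_K(\mathcal{T})$; and (c) the decomposition identity $v_K=\sum_{i\in\vertices}\varphi_i\frD_i v_K$ holds. Almost all of the work has already been done in the preceding lemmas, so the proof is essentially an assembly. I would organize it as follows.

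First, linearity of each $\frD_i$ is immediate: by \eqref{eq:def-Di-global} it is defined element-by-element via $\frD_j^T$, and \eqref{eq:DefMononialDecomp2} defines $\frD_j^T$ by its action on the monomial basis $\{\lambda^\alpha : |\alpha|=K\}$ of $\mathcal{L}_K(T)$ and extends linearly. Next, the target-space claim in (a) is the combination of two facts. That $\frD_i$ maps into $\mathcal{L}^0_{K-1}(\mathcal{T})$ (piecewise polynomials of degree $K-1$) is clear from \eqref{eq:DefMononialDecomp2}, since $\frD_j^T(\lambda^\alpha)$ is a polynomial of degree $K-1$ on $T$. That $\frD_i v_K$ is in fact \emph{continuous} across interelement boundaries within the patch $\omega_i$ — hence lies in $\mathcal{L}^1_{K-1}(\omega_i)$ — is exactly what Lemma~\ref{lem:ContDi} gives: for an interior face $S$ shared by two simplices $T,T'\in\omega_i$, the vertex $i$ lies on $S$, and Lemma~\ref{lem:ContDi} says $(\frD_j^T v_K)|_S$ depends only on $v_K|_S$, and likewise $(\frD_{j'}^{T'} v_K)|_S$ depends only on $v_K|_S$; since $v_K$ itself is continuous, $v_K|_S$ is unambiguous, and because the local operators are "universal" (defined through barycentric coordinates, independent of the particular simplex, as noted after \eqref{eq:def-Di-global}), the two traces agree. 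Hence $\frD_i v_K$ is globally continuous on $\omega_i$, i.e.\ $\frD_i v_K\in\mathcal{L}^1_{K-1}(\omega_i)$.

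For (b): $\varphi_i\in\mathcal{L}^1_1(\mathcal{T})$ is supported on $\omega_i$ and continuous, so $\varphi_i\frD_i v_K$ is a product of a continuous piecewise-linear function with a continuous piecewise polynomial of degree $K-1$ on $\omega_i$, vanishing outside $\omega_i$; hence it is continuous on all of $\Omega$ and piecewise of degree $\leq K$, i.e.\ lies in $\mathcal{L}^1_K(\mathcal{T})$. For (c): this is precisely \eqref{eq:global-decomp-D-aux}, which was already derived from the local decomposition identity \eqref{eq:local-decomp-DT} (itself proved via \eqref{eq:DefMononialDecomp}) together with the fact that the barycentric coordinates $\lambda_j$ on each $T$ are the restrictions of the global hat functions $\varphi_i$. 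So one just cites \eqref{eq:global-decomp-D-aux}.

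I do not expect any serious obstacle here; the only point requiring a little care is the continuity/patching argument for (a), and that has been isolated into Lemma~\ref{lem:ContDi}, so in the proof I would simply invoke it, noting explicitly that the shared vertex $i$ lies on every interior face of the patch $\omega_i$ (which is why Lemma~\ref{lem:ContDi} applies with $S$ equal to such a face and $j$ the local index of $i$). The proof is therefore short: state linearity, invoke Lemma~\ref{lem:ContDi} for well-definedness into $\mathcal{L}^1_{K-1}(\omega_i)$, observe the product lands in $\mathcal{L}^1_K(\mathcal{T})$, and cite \eqref{eq:global-decomp-D-aux} for the decomposition identity.
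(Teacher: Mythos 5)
Your proposal is correct and follows essentially the same route as the paper's proof: invoke Lemma~\ref{lem:ContDi} to get continuity of $\frD_i v_K$ on $\omega_i$ (hence membership in $\mathcal{L}^1_{K-1}(\omega_i)$ and of $\varphi_i\frD_i v_K$ in $\mathcal{L}^1_K(\mathcal{T})$), and cite \eqref{eq:global-decomp-D-aux} for the decomposition identity. The extra detail you supply — that $i\in T\cap T'$ for any two simplices of the patch, so the trace lemma applies on their shared subsimplex and the universality of $\frD_j^T$ makes the two traces coincide — is exactly the content the paper leaves implicit, and it is correct.
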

\begin{proof}
  As a consequence of Lemma~\ref{lem:ContDi} we see that $\frD_iv_K$ is continuous on~$\omega_i$ and are therefore elements of~$\mathcal{L}^1_{K-1}(\omega_i)$. 
  Hence, $\varphi_i \frD_i v_K \in \mathcal{L}^1_K(\mathcal{T})$. 
  We have already shown the decomposition formula
  in~\eqref{eq:global-decomp-D-aux}. 
\end{proof}
\begin{remark}[Decomposition of Bank and Yserentant]	  
  Our decomposition relies on the function space decomposition $\mathcal{L}_K^1(\tria) = \sum_{i\in \vertices} \varphi_i \mathcal{L}_{K-1}^1(\omega_i)$, see~\eqref{eq:space-decomp}. 
  In particular, we treat the functions~$\varphi_i$ as weights and use decomposition operators~$\varphi_i \frD_i$ with the weight outside.
  Bank and Yserentant instead use the decomposition
  \begin{align*}
    v_K &= \sum_{i \in \vertices} \Pi_{\mathcal{L}^1_K}(\varphi_i v_K),
  \end{align*}
  where $\Pi_{\mathcal{L}^1_K}$ is the Lagrange interpolation operator mapping to~$\mathcal{L}^1_K(\mathcal{T})$.
  Similarly as our operators~$\frD_i$, the operators $\frac{1}{\varphi_i} \Pi_{\mathcal{L}^1_K}(\varphi_i\, \cdot)$ map $\mathcal{L}^1_K(\mathcal{T})$ to $\mathcal{L}^1_{K-1}(\omega_i)$. 

  In principle we can use the decomposition of Bank and Yserentant. 
  However, our decomposition is adapted to the space decomposition~\eqref{eq:space-decomp} and the locally weighted quantity $\sum_{i \in \vertices} \skp{\frD_i v_K}{\frD_i v_K}_{\varphi_i}$ as in~\eqref{eq:K1tilde}. 
  This results in a  smaller constant $K_1$. 
\end{remark}

\begin{remark}[Zero traces]
  \label{rem:D-dirichlet}
  Lemma~\ref{lem:ContDi} implies that the operators $\varphi_i\frD_i$ preserve zeros traces. 
  Indeed, let $v_K \in \mathcal{L}^1_K(\mathcal{T})$ with $v_K|_S=0$ on some subsimplex $S \subset \partial \Omega$. For all $i\in \mathcal{N}$ with $i \notin S$, we have $(\varphi_i \frD_i v_K)|_S=0$. 
  If $i \in S$, then Lemma~\ref{lem:ContDi} shows $(\varphi_i \frD_i v_K)|_S=0$.
  In particular, if $v_K \in \mathcal{L}^1_K(\mathcal{T})$ satisfies $ v_K|_{\Gamma}=0$ for some subset $\Gamma \subset \partial \Omega$ (compatible with the triangulation), then $(\varphi_i \frD_i
  v_K)|_{\Gamma}=0$. 
\end{remark}

\subsection{Computation of the Lower Bound}\label{sec:CompLowerBound}

In the course of this section we shall compute the value $K_1$ in \eqref{eq:K1tilde} and thus obtain a lower bound on $\lambda_{\min}(C|_{\mathcal{L}^1_K(\tria)})$. 

\begin{proposition}[Values of Lower Bound]\label{prop:lower-bound-comp}
For the operators $\frD_i$ we obtain 
\begin{align}
  \label{eq:K1-expl}
  \sum_{i\in \vertices}\int_\Omega \varphi_i\, |\frD_i v_K|^2 \dx
  &\leq 
    \frac{2K+d}{K} \norm{v_K}_2^2 \quad \text{ for all } v_K \in \mathcal{L}^1_K(\tria),
\end{align}
i.e., \eqref{eq:K1tilde} is satisfied with $K_1 = (2K+d)/K$. 
Furthermore, $C$ satisfies \ref{itm:ellipticity} with 
  \begin{align*}
    \operatorname{cond}_2(C|_{\mathcal{L}_K^1(\mathcal{T})}) \leq \frac{2K+d}{K}.
  \end{align*}
\end{proposition}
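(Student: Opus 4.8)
The plan is to prove the key inequality \eqref{eq:K1-expl} locally on each simplex $T \in \mathcal{T}$, since the left-hand side $\sum_{i\in\vertices}\int_\Omega \varphi_i |\frD_i v_K|^2 \dx$ splits as a sum of contributions $\sum_{j=0}^d \int_T \lambda_j |\frD_j^T v_K|^2 \dx$ over the $d+1$ vertices of $T$, and the right-hand side $\norm{v_K}_2^2$ splits as $\sum_T \norm{v_K}_{2,T}^2$. So it suffices to show, for a single reference simplex $T$ and any $v_K \in \mathcal{L}_K(T)$, the bound
\begin{align*}
  \sum_{j=0}^d \int_T \lambda_j\, |\frD_j^T v_K|^2 \dx \leq \frac{2K+d}{K} \int_T |v_K|^2 \dx .
\end{align*}
By scaling this reduces to a statement on $\Tref$ (or just uses the integration formula \eqref{eq:integrationMonomials} directly), and it is a finite-dimensional linear-algebra estimate: comparing the quadratic form $v_K \mapsto \sum_j \int_T \lambda_j |\frD_j^T v_K|^2$ with the mass form $v_K \mapsto \int_T |v_K|^2$.

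First I would expand $v_K = \sum_{|\alpha|=K} v_\alpha \lambda^\alpha$ in the barycentric monomial basis and use the definition \eqref{eq:DefMononialDecomp2}, namely $\frD_j^T(\lambda^\alpha) = \frac{\alpha_j}{K}\lambda^{\alpha-e_j}$, to write $\lambda_j|\frD_j^T v_K|^2 = \lambda_j \big(\sum_\alpha v_\alpha \frac{\alpha_j}{K}\lambda^{\alpha-e_j}\big)^2$. Integrating term by term against \eqref{eq:integrationMonomials} produces, for each fixed $j$, a symmetric matrix $A^{(j)} = (A^{(j)}_{\alpha\beta})$ with $A^{(j)}_{\alpha\beta} = \frac{\alpha_j\beta_j}{K^2}\,|T|\,\frac{(\alpha+\beta-e_j)!\,d!}{(2K+d-1)!}$ (nonzero only when $\alpha_j,\beta_j>0$), while the mass matrix is $M_{\alpha\beta} = |T|\,\frac{(\alpha+\beta)!\,d!}{(2K+d)!}$. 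The claim becomes $\sum_j v^\top A^{(j)} v \leq \frac{2K+d}{K}\, v^\top M v$, i.e.\ $\sum_j A^{(j)} \preceq \frac{2K+d}{K} M$. The natural way to establish this is to exhibit the difference as positive semidefinite, ideally by recognizing $\frac{2K+d}{K}M - \sum_j A^{(j)}$ as a Gram-type matrix or, more likely, by a clever pointwise/algebraic identity: rewrite $\sum_j \lambda_j |\frD_j^T v_K|^2$ using Lemma~\ref{lem:Di-lower} and the decomposition identity \eqref{eq:local-decomp-DT}, $\sum_j \lambda_j \frD_j^T v_K = v_K$, together with a Cauchy–Schwarz step in the $j$-sum weighted by $\lambda_j$ (since $\sum_j \lambda_j = 1$), to relate $\sum_j \lambda_j |\frD_j^T v_K|^2$ back to $|v_K|^2$ plus controllable lower-order cross terms.

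Concretely, I expect the cleanest route is: apply $\frD_j^T$ to the expansion of $v_K$, use $\lambda_j \frD_j^T(\lambda^\gamma) = \frac{\gamma_j}{K}\lambda^\gamma + \frac{K-|\gamma|}{K}\lambda_j\lambda^\gamma$ from Lemma~\ref{lem:Di-lower} with $|\gamma|=K$ so the second term drops and $\lambda_j\frD_j^T(\lambda^\alpha)=\frac{\alpha_j}{K}\lambda^\alpha$, hence $\sum_j \lambda_j (\frD_j^T v_K)^2$ can be compared via the weighted Cauchy–Schwarz inequality $\big(\sum_j \lambda_j a_j\big)^2 \le \sum_j \lambda_j a_j^2$ — but that goes the wrong way, so instead I would integrate directly and track the combinatorial factors. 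After assembling, $\int_T \sum_j \lambda_j |\frD_j^T v_K|^2 \dx = \frac{d!}{K^2(2K+d-1)!}|T|\sum_{\alpha,\beta}v_\alpha v_\beta (\alpha\cdot\beta)(\alpha+\beta)! / \binom{??}{}$ — the key algebraic identity I need is something like $\sum_j \alpha_j\beta_j\,\frac{(\alpha+\beta-e_j)!}{(2K+d-1)!} = \frac{(\alpha+\beta)!}{(2K+d)!}\cdot(\text{something} \le 2K+d)$; indeed $\sum_j \alpha_j\beta_j (\alpha+\beta-e_j)! = (\alpha+\beta)!\sum_{j:\,\alpha_j\beta_j>0} \frac{\alpha_j\beta_j}{\alpha_j+\beta_j}$ and $\sum_j \frac{\alpha_j\beta_j}{\alpha_j+\beta_j} \le \frac14\sum_j(\alpha_j+\beta_j) = \frac{K}{2}$ by AM–GM on each term, which would give a factor $\frac{K}{2}\cdot\frac{(2K+d)!}{(2K+d-1)!}\cdot\frac1{K^2} = \frac{2K+d}{2K}$ — off by a factor $2$ from the target, suggesting the bound $\frac{\alpha_j\beta_j}{\alpha_j+\beta_j}\le\frac12\min(\alpha_j,\beta_j)$ must be combined with the constraint $|\alpha|=|\beta|=K$ more carefully, perhaps splitting diagonal ($\alpha=\beta$, giving exactly $\sum_j\alpha_j/2 = K/2$, hence factor $\frac{2K+d}{2K}\le\frac{2K+d}{K}$) from off-diagonal terms and handling the latter by a separate positive-semidefiniteness argument. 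This combinatorial estimate — getting the constant exactly $\frac{2K+d}{K}$ and verifying it is attained (so that $\lambda_{\min}(C|_{\mathcal{L}^1_K}) = K/(2K+d)$ is sharp, consistent with the introduction claiming the estimate cannot be substantially improved) — is the main obstacle; the reduction to one simplex, the scaling, and the final invocation of Lemma~\ref{lem:UpperBound} together with Lemma~\ref{lem:lower-bound} to conclude $\operatorname{cond}_2(C|_{\mathcal{L}^1_K(\mathcal{T})}) = \lambda_{\max}/\lambda_{\min} \le 1\cdot(2K+d)/K$ are routine.
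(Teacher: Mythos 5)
Your reduction to the single-simplex estimate, the scaling argument, the identification of the sharp constant with the largest eigenvalue of the quadratic form $v_K \mapsto \sum_{j}\int_T\lambda_j|\frD_j^T v_K|^2\dx$ relative to the mass form, and the final assembly via Lemma~\ref{lem:UpperBound} and Lemma~\ref{lem:lower-bound} all match the paper. But the core of the proof --- actually establishing the local bound with constant $\frac{2K+d}{K}$ --- is not there, and the route you sketch for it cannot work. Your strategy is to bound the entries of $\sum_j A^{(j)}$ by the corresponding entries of $M$ times a constant. Entrywise domination of symmetric matrices with nonnegative entries does not imply an inequality of quadratic forms, because the products $v_\alpha v_\beta$ change sign. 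Your own computation makes this unmistakable: the entrywise ratio you obtain via $\sum_j \frac{\alpha_j\beta_j}{\alpha_j+\beta_j}\le \frac{K}{2}$ is $\frac{2K+d}{2K}$, which is \emph{strictly smaller} than $\frac{2K+d}{K}$; since the paper shows the eigenvalue $\frac{2K+d}{K}$ is actually attained (on $\mathcal{Z}_K$), no valid argument can yield the smaller constant, so the entrywise step must be unsound rather than merely in need of a more careful combinatorial refinement. Splitting off the diagonal does not rescue this, because the off-diagonal contribution is exactly where the sign-indefiniteness lives.

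The paper closes the gap by a spectral argument you did not find. It introduces the self-adjoint positive definite operator $\frS$ on $\mathcal{L}_K(T)$ representing the quadratic form, and computes via Lemma~\ref{lem:Di-lower} that
\begin{align*}
  \frS \lambda^\sigma = \frac{K^2+\abs{\sigma}(\abs{\sigma}+d)}{K^2}\,\lambda^\sigma + \sum_{j=0}^d \frac{\sigma_j^2}{K^2}\,\lambda^{\sigma-e_j}.
\end{align*}
Since the correction terms have strictly lower degree, $\frS$ preserves each $\mathcal{L}_k(T)$, hence also the orthogonal complements $\mathcal{Z}_k$ of $\mathcal{L}_{k-1}(T)$ in $\mathcal{L}_k(T)$. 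Testing against $z_k\in\mathcal{Z}_k$ annihilates the lower-order terms, so $\mathcal{Z}_k$ is an eigenspace with eigenvalue $\frac{K^2+k(k+d)}{K^2}$, and the maximum over $k=0,\dots,K$ is $\frac{2K+d}{K}$ at $k=K$. If you want to complete your proof, this degree-filtration/orthogonal-polynomial structure is the missing idea; the purely combinatorial matrix comparison is a dead end.
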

\begin{proof}
For now let us assume that \eqref{eq:K1tilde} is satisfied with $K_1= (2K+d)/K$, which we show in the remaining section, see Proposition~\ref{prop:eigenvaluesS} below. 
By Lemma~\ref{lem:lower-bound} we have $\lambda_{\min}(C|_{\mathcal{L}^1_K(\tria)}) \geq 1/K_1$, and by Lemma~\ref{lem:UpperBound} it follows that 
\begin{align*}
\operatorname{cond}_2(C|_{\mathcal{L}_K^1(\mathcal{T})}) = \frac{\lambda_{\max}(C|_{\mathcal{L}^1_K(\tria)})}{\lambda_{\min}(C|_{\mathcal{L}^1_K(\tria)})} \leq K_1 = \frac{2K+d}{K}. 
\end{align*}   
\end{proof}

For the proof of Proposition~\ref{prop:lower-bound-comp} it remains to show that \eqref{eq:K1tilde} is satisfied with $K_1 = (2K+d)/K$. 
For this it suffices to show that locally we have
\begin{align}\label{eq:K1loc}
  \sum_{j=0}^d \int_T \lambda_j\,|\frD_j^T v_K|^2\dx \leq K_1 \norm{ v_K }_{2,T}^2 \qquad\text{for all }v_K\in \mathcal{L}_K(T) \text{ and all } T\in \mathcal{T}.
\end{align}
Scaling shows that the constant~$K_1$ is independent of the simplex~$T\in \mathcal{T}$.
In the remaining subsection we explicitly compute the constant $K_1$. 

The mapping $(v,w) \mapsto \sum_{j=0}^d \langle \lambda_j \frD_j^T v, \frD_j^T w\rangle_T$ is a symmetric bilinear form on $\mathcal{L}_K(T) \times \mathcal{L}_K(T)$. 
Hence, there exists a self-adjoint and positive semi-definite operator $\frS\colon \mathcal{L}_K(T) \to \mathcal{L}_K(T)$ with
\begin{align*}
  \skp{\frS v_K}{w_K}_T &= \sum_{j=0}^d \langle \lambda_j \frD_j^T v_K,\frD_j^T
                      w_K\rangle_T\qquad\text{for all }v_K,w_K\in \mathcal{L}_K(T).
\end{align*}
Let us show that $\frS$ is also positive definite. 
Indeed, if $\skp{\frS v_K}{v_K}_T=0$ for some $v_K\in \mathcal{L}_K(T)$, then $\sum_{j=0}^d \langle \lambda_j \frD_j^T v_K,\frD_j^T v_K\rangle_T=0$, which implies $\frD_j^T v_K=0$ for all $j = 0,\dots,d $. 
With the decomposition $v_K = \sum_{j=0}^d \lambda_j \frD_j^T v_K$, we conclude that $v_K=0$. 
This proves that $\frS$ is positive definite.
We can rewrite~\eqref{eq:K1loc} as
\begin{align}
  \label{eq:S_K1}
  \skp{\frS v_K}{v_K} \leq K_1 \norm{ v_K }_{2,T}^2 \qquad\text{for all }v_K\in \mathcal{L}_K(T).
\end{align}
Thus, the smallest constant~$K_1$ in \eqref{eq:K1loc} is the largest
eigenvalue of~$\frS$.

Before characterizing the eigenspaces and eigenvalues of $\frS$ we investigate its action on monomials.
Recall the convention $0\cdot \lambda^\sigma = 0$ for all multi indices $\sigma\in \mathbb{Z}^{d+1}$.
\begin{lemma}[Operator $\frS$]
  For all multi indices $\sigma\in \mathbb{N}^{d+1}_0$ with $|\sigma| \leq K$ the operator $\frS$ satisfies
  \begin{align}\label{eq:S_lambda}
    \frS \lambda^\sigma = \frac{K^2 + \abs{\sigma}(\abs{\sigma}+d)}{K^2}
    \lambda^\sigma
    + \sum_{j=0}^d
    \frac{\sigma^2_j}{K^2} \lambda^{\sigma-e_j}.
  \end{align}
\end{lemma}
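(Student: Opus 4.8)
The plan is to compute $\frS\lambda^\sigma$ directly from its defining bilinear form by pairing it against the monomial basis $\lambda^\tau$ with $|\tau| = |\sigma|$ and also against lower-order basis elements, and to check that the claimed right-hand side reproduces exactly the same pairings. Concretely, since $\skp{\frS\lambda^\sigma}{w}_T = \sum_{j=0}^d \skp{\lambda_j \frD_j^T \lambda^\sigma}{\frD_j^T w}_T$, I would first use Lemma~\ref{lem:Di-lower} to get a closed form for $\lambda_j \frD_j^T(\lambda^\sigma)$, hence for $\frD_j^T(\lambda^\sigma)$ after dividing by $\lambda_j$ (valid since $\lambda_j > 0$ a.e.\ on $T$), obtaining
\begin{align*}
  \frD_j^T(\lambda^\sigma) = \frac{\sigma_j}{K}\lambda^{\sigma - e_j} + \frac{K - |\sigma|}{K}\lambda^\sigma
\end{align*}
with the convention $0\cdot\lambda^{\sigma-e_j}=0$. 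The products $\lambda_j\frD_j^T(\lambda^\sigma)$ and $\lambda_j\frD_j^T(\lambda^\tau)$ are then explicit linear combinations of $\lambda^{\sigma+\tau-e_j}$-type monomials, and the $L^2(T)$-inner products of monomials are given by the Beta-type integration formula in Remark~\ref{rem:IntegrMonomials}.

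**Key steps in order.** First, establish the closed form for $\frD_j^T(\lambda^\sigma)$ as above. Second, for a test monomial $\lambda^\tau$ with $|\tau|\le K$, write out $\sum_{j=0}^d \skp{\lambda_j\frD_j^T\lambda^\sigma}{\frD_j^T\lambda^\tau}_T$ and expand using the two-term formulas for both factors; every resulting term is $\dashint_T \lambda^\rho\dx$ times $|T|$ for various $\rho$ obtained from $\sigma + \tau$ by subtracting one or two unit vectors. Third, substitute the factorial formula $\dashint_T\lambda^\rho\dx = \rho!\,d!/(|\rho|+d)!$ and simplify; the key algebraic identities needed are of the form $\sum_{j=0}^d \sigma_j = |\sigma|$ and the telescoping sums $\sum_{j=0}^d \sigma_j(\sigma_j - 1)\cdots$ that convert products of factorials back into a clean rational expression in $|\sigma|$, $d$, $K$. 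Fourth, compare with the pairing of the claimed right-hand side $\frac{K^2 + |\sigma|(|\sigma|+d)}{K^2}\lambda^\sigma + \sum_j \frac{\sigma_j^2}{K^2}\lambda^{\sigma - e_j}$ against $\lambda^\tau$, again via the integration formula, and verify equality for all $\tau$. Since the $\lambda^\tau$ with $|\tau|\le K$ span $\mathcal{L}_K(T)$ (actually one only needs $|\tau| = |\sigma|$ and $|\tau| = |\sigma| - 1$ to pin down a polynomial of the stated form, but testing against the full basis is cleanest), this determines $\frS\lambda^\sigma$ uniquely and the formula follows. A lighter alternative: since $\lambda^{\sigma-e_j}$ for varying $j$ together with $\lambda^\sigma$ are the only monomials that can appear (degrees $|\sigma|$ and $|\sigma|-1$, supported on the face structure dictated by $\sigma$), I could instead directly expand $\sum_j \lambda_j (\frD_j^T\lambda^\sigma)(\frD_j^T w)$ as a polynomial identity in $\mathcal{L}_{2K-1}(T)$ — but the inner-product route avoids dividing polynomials and is safer.

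**Main obstacle.** The genuine difficulty is purely the bookkeeping in the factorial simplification: after expanding the bilinear form one gets a sum over $j$ of ratios $\frac{(\sigma+\tau-e_j)!\,d!}{(|\sigma+\tau|-1+d)!}$ (and similar with $e_j$ subtracted twice, or from only one factor), and one must massage $\sum_j \sigma_j\tau_j$, $\sum_j \sigma_j(\sigma_j-1)$, etc.\ into the compact coefficient $\frac{K^2+|\sigma|(|\sigma|+d)}{K^2}$. The cross term where $\frD_j^T\lambda^\sigma$ contributes its $\frac{\sigma_j}{K}\lambda^{\sigma-e_j}$ part while $\frD_j^T\lambda^\tau$ contributes its $\frac{K-|\tau|}{K}\lambda^\tau$ part (and vice versa) is where sign/index errors are easy, so I would handle the diagonal case $\tau=\sigma$ first as a sanity check — there the identity reduces to showing $\sum_{j=0}^d \skp{\lambda_j\frD_j^T\lambda^\sigma}{\frD_j^T\lambda^\sigma}_T$ equals the expected quadratic-form value, which already exhibits all the needed cancellations in miniature. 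Everything else is routine once the closed form for $\frD_j^T$ and the monomial integration formula are in hand.
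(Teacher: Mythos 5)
Your proposal is correct and follows essentially the same route as the paper: obtain the closed form for $\frD_j^T(\lambda^\sigma)$ from Lemma~\ref{lem:Di-lower}, pair against monomials via the integration formula of Remark~\ref{rem:IntegrMonomials}, and compare with the pairings of the claimed right-hand side. The paper streamlines the bookkeeping by testing only against $\lambda^\alpha$ with $\abs{\alpha}=K$ (already a basis of $\mathcal{L}_K(T)$), for which $\frD_j^T(\lambda^\alpha)=\frac{\alpha_j}{K}\lambda^{\alpha-e_j}$ is a single term, so the cross terms you identify as the main obstacle never arise.
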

\begin{proof}
  Let $\alpha,\sigma\in \mathbb{N}_0^{d+1}$ be multi indices with $\abs{\sigma} \leq K = \abs{\alpha}$. 
  Lemma~\ref{lem:Di-lower} implies
  \begin{align*}
    \lefteqn{\skp{\frS \lambda^\sigma}{\lambda^\alpha}_T
    =
    \sum_{j=0}^d \langle \lambda_j \frD_j^T
    (\lambda^\sigma),\frD_j^T (\lambda^\alpha)\rangle_T} \quad
    &
    \\
    &=
      \sum_{j=0}^d \biggskp{ \frac{\sigma_j}{K}
      \lambda^{\sigma-e_j} + \frac{K-\abs{\sigma}}{K}
      \lambda^\sigma}{ \frac{\alpha_j}{K} \lambda^\alpha} _T
    \\
    &=
      \sum_{j=0}^d \int_T \left( \frac{\sigma_j \alpha_j}{K^2} \lambda^{\sigma+\alpha-e_j} +
      \frac{K-\abs{\sigma}}{K}     \frac{\alpha_j}{K}\lambda^{\sigma+\alpha} \right)
      \dx 
    \\
    &= d!\, |T|
      \sum_{j=0}^d \left(\frac{\sigma_j(\sigma_j+\alpha_j) - \sigma_j^2}{K^2}
      \frac{(\sigma+\alpha-e_j)!}{(\abs{\sigma+\alpha}+d-1)!} +
      \frac{K-\abs{\sigma}}{K}  \frac{\alpha_j}{K}
      \frac{(\sigma+\alpha)! }{(\abs{\sigma+\alpha}+d)!}\right)  
    \\
    &=
      d!\, |T| \sum_{j=0}^d \left( \frac{\sigma_j(\abs{\sigma+\alpha}+d)}{K^2}
      \frac{(\sigma+\alpha)!}{(\abs{\sigma+\alpha}+d)!} - \frac{\sigma_j^2}{K^2}
      \frac{(\sigma+\alpha-e_j)!}{(\abs{\sigma+\alpha}+d-1)!} \right)\\
    &\qquad \quad + d!\, |T|\, \frac{K-\abs{\sigma}}{K}
      \frac{(\sigma+\alpha)! }{(\abs{\sigma+\alpha}+d)!}      
    \\
    &=
      \biggskp{\frac{|\sigma| (|\sigma|+K+d)}{K^2} \lambda^\sigma - \sum_{j=0}^d \frac{\sigma_j^2}{K^2} \lambda^{\sigma-e_j} + \frac{K-\abs{\sigma}}{K} \lambda^\sigma }{\lambda^\alpha}_{\!\!T}
    \\
    &=
      \biggskp{\frac{K^2 + \abs{\sigma}(\abs{\sigma}+d)}{K^2}
      \lambda^\sigma
      + \sum_{j=0}^d
      \frac{\sigma^2_j}{K^2} \lambda^{\sigma-e_j}}{\lambda^\alpha}_{\!\!T}.
  \end{align*}
  Since $\mathcal{L}_K(T) = \textup{span}\lbrace \lambda^\alpha \colon \alpha\in \mathbb{N}^{d+1}_0$ with $\alpha = |K|\rbrace$, the calculation proves \eqref{eq:S_lambda}.
\end{proof}

The following proposition shows that the eigenspaces of $\frS$ are the orthogonal spaces $\mathcal{Z}_k$, for $k=0,\dots,K$, defined by
\begin{align}\label{eq:defZk}
  \mathcal{Z}_k \coloneqq \lbrace v_k \in \mathcal{L}_k(T)\colon \langle v_k,w_{k-1}\rangle_T=0\text{ for all }w_{k-1}\in \mathcal{L}_{k-1}(T)\rbrace.
\end{align}
For the computation we use ideas from \cite[Theorem~11]{Der85}.

\begin{proposition}[Eigenvalues]\label{prop:eigenvaluesS}
  The eigenspaces of~$\frS\colon\mathcal{L}_K(T)\to \mathcal{L}_K(T)$ are the spaces $\mathcal{Z}_k$ and the corresponding eigenvalues are given by
  \begin{align*}
  \mu_k \coloneqq \frac{K^2+k(k+d)}{K^2}\qquad\text{for }k=0,\dots,K.
  \end{align*}
  In particular, the inequality in~\eqref{eq:K1tilde} holds with $K_1 = (2K+d)/K$.
\end{proposition}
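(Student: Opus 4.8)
The plan is to combine the triangular structure of $\frS$ with respect to the degree filtration $\mathcal{L}_0(T)\subset\mathcal{L}_1(T)\subset\dots\subset\mathcal{L}_K(T)$ with the self-adjointness of $\frS$. First I would record that, just as for $k=K$, the set $\{\lambda^\sigma\colon\abs{\sigma}=k\}$ is a basis of $\mathcal{L}_k(T)$: every polynomial of degree at most $k$ is a linear combination of such monomials (multiply by $1=(\sum_j\lambda_j)^{k-\abs{\sigma}}$) and the number of these $\sigma$ equals $\dim\mathcal{L}_k(T)$. Hence the $\mathcal{L}_k(T)$ form an increasing filtration of $\mathcal{L}_K(T)$, and $\mathcal{L}_K(T)=\bigoplus_{k=0}^K\mathcal{Z}_k$ is an $\skp{\cdot}{\cdot}_T$-orthogonal decomposition with $\mathcal{Z}_k=\mathcal{L}_k(T)\cap\mathcal{L}_{k-1}(T)^\perp$, where $\mathcal{L}_{-1}(T)\coloneqq\{0\}$ and the orthogonal complement is taken inside $\mathcal{L}_K(T)$.

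Next I would read off from \eqref{eq:S_lambda} that, writing $v\in\mathcal{L}_k(T)$ as $v=\sum_{\abs{\sigma}=k}c_\sigma\lambda^\sigma$,
\[
  \frS v=\mu_k\,v+\frac{1}{K^2}\sum_{\abs{\sigma}=k}c_\sigma\sum_{j=0}^d\sigma_j^2\,\lambda^{\sigma-e_j},\qquad\mu_k\coloneqq\frac{K^2+k(k+d)}{K^2},
\]
and that the last sum lies in $\mathcal{L}_{k-1}(T)$. In particular $\frS(\mathcal{L}_k(T))\subseteq\mathcal{L}_k(T)$ for all $k\leq K$, so $\frS$ respects the filtration; since $\frS$ is self-adjoint with respect to $\skp{\cdot}{\cdot}_T$, it then also maps each orthogonal complement $\mathcal{L}_k(T)^\perp$ into itself, and therefore each $\mathcal{Z}_k=\mathcal{L}_k(T)\cap\mathcal{L}_{k-1}(T)^\perp$ is $\frS$-invariant.

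Then I would identify $\frS|_{\mathcal{Z}_k}$: for $v_k\in\mathcal{Z}_k$ the displayed formula gives $\frS v_k=\mu_k v_k+r$ with $r\in\mathcal{L}_{k-1}(T)$, and since $v_k\perp\mathcal{L}_{k-1}(T)$ this yields $\skp{\frS v_k}{v_k}_T=\mu_k\norm{v_k}_{2,T}^2$. As $\frS|_{\mathcal{Z}_k}$ is self-adjoint with constant Rayleigh quotient $\mu_k$, polarization shows it equals multiplication by $\mu_k$ on $\mathcal{Z}_k$; hence $\mathcal{Z}_k$ is an eigenspace of $\frS$ with eigenvalue $\mu_k$. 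Because $k\mapsto k(k+d)$ is strictly increasing on $\mathbb{N}_0$, the values $\mu_0,\dots,\mu_K$ are pairwise distinct, and together with $\bigoplus_{k=0}^K\mathcal{Z}_k=\mathcal{L}_K(T)$ this shows these are all the eigenspaces. Finally, since $\mu_k$ increases in $k$, one has $\lambda_{\max}(\frS)=\mu_K=(2K+d)/K$, so by \eqref{eq:S_K1} (equivalently \eqref{eq:K1loc}) the inequality \eqref{eq:K1tilde} holds with $K_1=(2K+d)/K$.

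The step I expect to require the most care is the invariance of $\mathcal{Z}_k$: the key observation is that $\frS$ respects the degree filtration, so that self-adjointness forces it to preserve the orthogonal complements as well, after which the eigenvalue identification via the Rayleigh quotient is immediate. If one preferred to avoid self-adjointness at that point, one could instead argue by a dimension count — the filtration-triangular form of $\frS$ pins down its eigenvalues as the $\mu_k$ with multiplicities $\dim\mathcal{Z}_k$, and distinctness of the $\mu_k$ together with $\frS$-invariance of the $\mathcal{L}_k(T)$ identifies the eigenspaces with the $\mathcal{Z}_k$.
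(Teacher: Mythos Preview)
Your proof is correct and follows essentially the same route as the paper: both arguments use \eqref{eq:S_lambda} to see that $\frS$ preserves each $\mathcal{L}_k(T)$, invoke self-adjointness to obtain invariance of $\mathcal{Z}_k$, and then read off the eigenvalue from $\skp{\frS v_k}{v_k}_T=\mu_k\norm{v_k}_{2,T}^2$. Your constant-Rayleigh-quotient/polarization step is a slightly more direct substitute for the paper's Steps~2--3 (which instead diagonalize $\frS|_{\mathcal{Z}_k}$ and compute every eigenvalue to be $\mu_k$), and your explicit observation that the $\mu_k$ are pairwise distinct cleanly justifies that the $\mathcal{Z}_k$ are precisely the eigenspaces.
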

\begin{proof}
   \textit{Step 1 (Show that
    $\frS \mathcal{Z}_k \subset \mathcal{Z}_k$).} 
    The identity in \eqref{eq:S_lambda} verifies
  \begin{align}\label{eq:inclusion}
    \frS \mathcal{L}_k(T) \subset \mathcal{L}_k(T)\qquad\text{for all }k=0,\dots,K.
  \end{align}
  This and the orthogonality of~$\mathcal{Z}_k$ on $\mathcal{L}_{k-1}(\mathcal{T})$ shows that
  \begin{align}\label{eq:identSinZk}
    \langle \frS z_k,w_{k-1}\rangle_T = \langle z_k, \frS
    w_{k-1}\rangle_T = 0 \qquad \text{for all } z_k\in \mathcal{Z}_k, \, w_{k-1}\in\mathcal{L}_{k-1}(T).
  \end{align}
  Combining \eqref{eq:inclusion} and  \eqref{eq:identSinZk} implies that $\frS \mathcal{Z}_k \subset \mathcal{Z}_k$ for all $k=0,\dots,K$.

  \textit{Step 2 ($\mathcal{Z}_k$ is spanned by eigenfunctions).}
  Since $\frS|_{\mathcal{Z}_K}\colon\mathcal{Z}_k \to \mathcal{Z}_k$, $k=0,\dots,K$, is a self-adjoint and positive definite operator, there exist linearly independent eigenfunctions $z_{k,1},\dots,z_{k,\dim \mathcal{Z}_k} \in \mathcal{Z}_k$ with eigenvalues $\mu_{k,1},\dots,\mu_{k,\dim \mathcal{Z}_k} > 0$ in the sense that any $j = 0,\dots,\dim \mathcal{Z}_k$ we have that
  \begin{align}\label{eq:EigenspacesS}
    \langle \frS z_{k,j},w_k\rangle_T = \mu_{k,j} \, \langle z_{k,j},w_k\rangle_T\qquad\text{for all }w_k \in \mathcal{Z}_k.
  \end{align}
  The orthogonality of $\frS z_{k,j}\in \mathcal{Z}_k$ on $\mathcal{Z}_\ell$ with $\ell\in \mathbb{N}\setminus \lbrace k \rbrace$ extends the identity in \eqref{eq:EigenspacesS} to all functions in $\mathcal{L}_K(T)$. 
  This means that one has that
  \begin{align}\label{eq:EVPzk}
    \langle \frS z_{k,j},w_K\rangle_T = \mu_{k,j} \, \langle z_{k,j},w_K\rangle_T \quad \text{ for all } w_K \in \mathcal{L}_K(T).
  \end{align}
  Hence, the function $z_{k,j}$ is also an eigenfunction of $\frS \colon \mathcal{L}_K(T) \to \mathcal{L}_K(T)$.
  
  \textit{Step 3 (Computation of the eigenvalues).} 
For $k = 0,\dots,K$ let $z_k \in \mathcal{Z}_k \setminus \lbrace 0 \rbrace$ be an eigenfunction of $\frS$ with eigenvalue $\mu_k$, that is, $(z_k,\mu_k) \in \mathcal{Z}_k\times \mathbb{R}$ satisfies \eqref{eq:EVPzk}. 
There exist coefficients $z_\sigma\in \mathbb{R}$, for $\abs{\sigma} = k$, and a function $v_{k-1}\in \mathcal{L}_{k-1}(T)$ with
  \begin{align*}
    z_k = \sum_{|\sigma| = k} z_\sigma \lambda^\sigma + v_{k-1}.
  \end{align*}
  The identity in \eqref{eq:S_lambda} and the $L^2$-orthogonality of $z_k$ and $\frS z_k$ on $\mathcal{L}_{k-1}(T)$ imply 
 \begin{align*}
 \mu_k\, \langle z_k,z_k\rangle_T 
 &= \langle \frS z_k, z_k\rangle_T  
 =  
      \frac{K^2 + \abs{\sigma}(\abs{\sigma}+d)}{K^2}
      \sum_{|\sigma| = k} \langle z_\sigma \lambda^\sigma
      ,z_k\rangle_T \\
 &=    \frac{K^2 + \abs{\sigma}(\abs{\sigma}+d)}{K^2} \langle
    z_k,z_k\rangle_T.
  \end{align*}  
  This shows that any eigenfunction of $\frS$ in $\mathcal{Z}_k$ (Step 2) has eigenvalue 
  \begin{align*}
    \mu_k &= \frac{K^2 + \abs{\sigma}(\abs{\sigma}+d)}{K^2} = \frac{K^2 + k(k+d)}{K^2}.
  \end{align*}
  For $k=K$ we obtain the largest eigenvalue~$\frac{2K+d}{K}$. 
  This proves~\eqref{eq:S_K1}, \eqref{eq:K1loc} and in particular \eqref{eq:K1tilde} with~$K_1= \frac{2K+d}{K}$.
\end{proof}
\begin{remark}[Sharp estimate]
  The estimate
  $\operatorname{cond}_2(C|_{\mathcal{L}_K^1(\mathcal{T})})\leq
  \frac{2K+d}{K}$ in Proposition~\ref{prop:lower-bound-comp} is
  essentially sharp.

  First let us show this for a triangulation~$\mathcal{T}$ that consists of one single $d$-simplex~$T$. 
  To avoid confusion we denote the operators~$C$
  and $C_i$ in this situation by~$\Cloc$ and $\Cloc_i$,
  respectively.  
  By Lemma~\ref{lem:UpperBound} we know that
  $\lambda_{\max} (\Cloc|_{\mathcal{L}_K^1(\mathcal{T})})=1$, and Theorem~\ref{thm:propCd} shows that $1$ is an eigenfunction of $\Cloc$ with eigenspace $\mathcal{L}_{K-1}(T)$. 
  It remains to show that
  $\lambda_{\min} (\Cloc|_{\mathcal{L}_K(T)})=\frac{K}{2K+d}$. 
  For this purpose,
  we prove that the non-zero elements of~$\mathcal{Z}_K$ are
  eigenfunctions of~$\Cloc$ with eigenvalue $\frac{K}{2K+d}$.

  By \cite[Sec.\ 5.3]{DunXu14} for the space~$\mathcal{L}_K(T)$ there are $\skp{\cdot}{\cdot}$-biorthogonal bases $\mathcal{B}_\alpha$ and
  $\mathcal{M}_\alpha$ indexed over 
  $\alpha \in \setN_0^{d+1}$ with
  $\alpha_0=0$ and $\abs{\alpha}\leq K$ such that
  $\mathcal{B}_\alpha = \partial_\lambda^\alpha(\lambda^\alpha
  \lambda_0^{\abs{\alpha}})$, 
  $\mathcal{M}_\alpha-\lambda^\alpha \in \mathcal{L}_{K-1}(T)$, and
  $\skp{\mathcal{B}_\alpha}{\mathcal{M}_\beta}= 0$ if
  $\alpha\neq \beta$. 
  Moreover, for $i=0,\dots,d$ we find 
  $\skp{\cdot}{\cdot}_{\lambda_i}$-orthogonal bases $\mathcal{B}_\sigma^i$ and $\mathcal{M}_\sigma^i$ of $\mathcal{L}_{K-1}(T)$ indexed over $\sigma \in \setN_0^{d+1}$ with $\sigma_i=0$ and $\abs{\sigma}\leq K-1$ such that
  $\mathcal{B}^i_\sigma = \lambda_i^{-1} \partial_\lambda^\sigma(\lambda^{\sigma+e_i}
  \lambda_0^{\abs{\sigma}})$ and
  $\mathcal{M}^i_\sigma-\lambda^\sigma \in \mathcal{L}_{K-2}(T)$ and
  $\skp{\mathcal{B}^i_\sigma}{\mathcal{M}^i_\tau}_{\lambda_i}= 0$ if
  $\sigma\neq \tau$.
  This allows us to represent~$\Cloc_i$ for $v \in \mathcal{L}_K(T)$ as
  \begin{align}
    \label{eq:repr_Cloc_i}
    \Cloc_i v
    = \sum_{|\sigma|\leq K-1,\sigma_0 = 0} \frac{\skp{v}{\lambda_i \mathcal{M}^i_\sigma}}{\skp{ \mathcal{B}_\sigma^i}{\lambda_i \mathcal{M}^i_\sigma}}\mathcal{B}_\sigma^i.
  \end{align}
  Suppose that $\bar{\alpha} \in \setN_0^{d+1}$ with $\bar{\alpha}_0=0$ and $\abs{\bar{\alpha}}=K$. 
  The polynomials $\mathcal{B}_{\bar{\alpha}}$ for such~$\bar{\alpha}$ form a basis of~$\mathcal{Z}_K$.
  By~\eqref{eq:repr_Cloc_i} and the biorthogonality it follows that
  \begin{align}
    \label{eq:repr_Cloc_i2}
    \Cloc_i \mathcal{B}_{\bar{\alpha}}
    = \sum_{|\sigma|\leq K-1,\sigma_0 = 0} \frac{
       \skp{\mathcal{B}_{\bar{\alpha}}}{\lambda^{\bar{\alpha}+e_i}}
    }{
    \skp{ \mathcal{B}_\sigma^i}{\lambda^{\bar{\alpha}+e_i}}
    }\mathcal{B}_\sigma^i.
  \end{align}
  Let us define $\delta_{\alpha_i>0}$ as $1$ if $\alpha_i>0$ and as~$0$ if not.
  From~\eqref{eq:repr_Cloc_i2},
  the definition of $\mathcal{B}_{\bar{\alpha}}$, and a short calculation using integration by parts it follows that
  \begin{align}
    \label{eq:repr-aux1}
    \begin{aligned}
    \Cloc_i \mathcal{B}_{\bar{\alpha}}
    &
      = \delta_{\alpha_i>0} \, \frac{\skp{\mathcal{B}_{\bar{\alpha}} }{\lambda^\alpha}}{\skp{ \mathcal{B}_{\bar{\alpha}}^i}{ \lambda^{\bar{\alpha}}}}\mathcal{B}_{\bar{\alpha}}^i
      = - \delta_{\alpha_i>0}\; \alpha_i \frac{K}{2K+d} \mathcal{B}_{\bar{\alpha}-e_i}^i \qquad \text{for $i\neq 0$},
    \\
    \Cloc_0 \mathcal{B}_{\bar{\alpha}}
    &
      = - \sum_{j=1}^d \delta_{\alpha_j>0} \frac{\skp{\mathcal{B}_{\bar{\alpha}} }{\lambda^{\bar{\alpha}}}}{\skp{ \mathcal{B}_{\bar{\alpha}}^0}{ \lambda^{\bar{\alpha} +e_0-e_j}}}\mathcal{B}_{\bar{\alpha}-e_j}^0
      = \sum_{j=1}^d \delta_{\alpha_j>0} \frac{\alpha_j^2}{2K+d} \mathcal{B}_{\bar{\alpha}-e_j}^0.
    \end{aligned}
  \end{align}
  Now,~\eqref{eq:repr-aux1} and
  $1 = \sum_{j=1}^d \alpha_j/K$ 
  yield that
  \begin{align*}
    \Cloc \mathcal{B}_{\bar{\alpha}}
    &= \sum_{i=0}^d \lambda_i \Cloc_i \mathcal{B}_{\bar{\alpha}} = \frac{K}{2K+d} \sum_{j=1}^d\frac{\bar{\alpha}_j}{K}
      \partial^{\bar{\alpha} - e_j}(\bar{\alpha}_j \lambda^{\bar{\alpha}-e_j} - K \lambda^{\bar{\alpha}} \lambda_0^{|\bar{\alpha}|-1})
    \\
    & = \frac{K}{2K+d} \sum_{j=1}^d \frac{\bar{\alpha}_j}{K} \partial^{\bar{\alpha}}(\lambda^{\bar{\alpha}} \lambda_0^{|\bar{\alpha}|}) = \frac{K}{2K+d} \mathcal{B}_{\bar{\alpha}}.
  \end{align*}
  This proves that~$\mathcal{B}_{\bar{\alpha}}$ is an eigenfunction
  of~$\Cloc$ with eigenvalue~$\frac{K}{2K+d}$. Thus, every non-zero
  element of~$\mathcal{Z}_K$ is also an eigenfunction of~$\Cloc$ with
  eigenvalue~$\frac{K}{2K+d}$.
  Hence, sharpness of 
  $\operatorname{cond}_2(C|_{\mathcal{L}_K^1(\mathcal{T})})=
  \frac{2K+d}{K}$ is proved in the case of a triangulation consisting of a single $d$-simplex.

  The estimate is also sharp for general triangulations if $K \geq 2$.
  In this case we use the local eigenfunction $\mathcal{B}_{(K-1)e_1+e_2}$ of~$\Cloc$ and symmetrize it
  over~$(\lambda_0,\dots, \lambda_d)$ to obtain a fully symmetric
  eigenfunction of~$\Cloc$ with eigenvalue~$\frac{K}{2K+d}$. 
  For arbitrary triangulations the local symmetric functions can be patched together to obtain a global continuous function~$z \in \mathcal{L}^1_K(\mathcal{T})$, which is an eigenfunction of~$C$ with eigenvalue~$\frac{K}{2K+d}$.

  For $K=1$ we cannot symmetrize the local eigenfunction and thus,
  consider slightly less general triangulations. 
  Assume that the
  triangulation is $d+1$-colorable, i.e., each vertex of a simplex is assigned with a different number in the set $\set{0,\dots,d}$. 
  For example the uniform triangulation
  of a square and all its refinements using the bisection algorithm by Maubach and Traxler satisfy this condition. 
  Then, again the local eigenfunctions  $\mathcal{B}_{e_1}$
  of~$\Cloc$ can be patched together to a global eigenfunction~$z \in \mathcal{L}^1_1(\mathcal{T})$ of~$C$ with eigenvalue~$\frac{1}{2+d}$.
\end{remark}

\subsection{Zero Boundary Values}
\label{sec:dirichl-bound-valu}

In this subsection we consider the $L^2$-projection onto Lagrange spaces with zero boundary traces. 
Again let $\Omega$ be a bounded, polyhedral domain in~$\Rd$ with $d\in \mathbb{N}$. 
Moreover, let~$\Gamma \subset \partial \Omega$ denote the part of the boundary with zero boundary values. 
We assume that~$\Gamma$ is resolved by the triangulation and we define the space
\begin{align*}
  \mathcal{L}_{K,\Gamma}^1(\mathcal{T}) \coloneqq \set{ v\in
  \mathcal{L}_K^1(\mathcal{T}) \colon v|_{\Gamma}  = 0}. 
\end{align*}
By $Q_{\Gamma}$ we denote the $L^2$-projection to~$\mathcal{L}^1_{K,\Gamma}(\mathcal{T})$ with respect to~$\skp{\cdot}{\cdot}$.

As in~\eqref{eq:space-decomp} we decompose $\mathcal{L}_{K,\Gamma}^1(\tria)$ using the Lagrange basis~$\varphi_i$ of $\mathcal{L}^1_1(\mathcal{T})$. 
With $\mathcal{L}_{K-1,\Gamma}^1(\omega_i) \coloneqq \set{ v_{K-1} \in  \mathcal{L}_{K-1}^1(\omega_i)\colon \varphi_i v_{K-1} \in \mathcal{L}_{K,\Gamma}^1(\tria)}$
the decomposition reads 
\begin{align}
  \label{eq:space-decomp-D}
  \mathcal{L}_{K,\Gamma}^1(\tria) = \sum_{i\in \vertices} \varphi_i
  \mathcal{L}_{K-1,\Gamma}^1(\omega_i).
\end{align}
Let $C_{i,\Gamma}$ denote the (local) orthogonal projection with respect to the inner product
$\skp{\cdot}{\cdot}_{\varphi_i}$ mapping $L^2(\omega_i)$ onto the
space $\mathcal{L}^1_{K-1,\Gamma}(\omega_i)$. 
As before, we can extend~$C_{i,\Gamma}$ to $C_{i,\Gamma} \colon L^2(\Omega) \to \mathcal{L}^1_{K-1,\Gamma}(\omega_i)$. 
We define the operator $C_{\Gamma} \colon L^2(\Omega) \to \mathcal{L}^1_{K,\Gamma}(\mathcal{T})$ by
\begin{align}\label{eq:def_C_D}
  C_{\Gamma} \coloneqq \sum_{i\in \vertices} \varphi_i C_{i,\Gamma}.
\end{align}
We use the same distance~$\delta$ on $\tria$ induced by the nodal neighbor notion, see Definition~\ref{def:Metric}.  
Let us show that~$C_{\Gamma}$ and $\delta$ satisfy~\ref{itm:self-adjoint}--\ref{itm:locality} with $\mathcal{L}^1_K(\mathcal{T})$ replaced by $\mathcal{L}^1_{K,\Gamma}(\mathcal{T})$. 

The proof of~\ref{itm:self-adjoint} (Self-adjoint), \ref{itm:distance} (Distance) and~\ref{itm:locality} (Locality) follows as before. 
The proof of the upper bound $\lambda_{\max}( C_{\Gamma}) \leq 1$ is also the same.

With the same operators~$\frD_i \colon \mathcal{L}^1_K(\mathcal{T}) \to \mathcal{L}^1_{K-1}(\omega_i)$ as in Section~\ref{sec:LocDecomp} we have 
\begin{align}
  v_K = \sum_{i\in \vertices} \varphi_i \frD_i v_K\qquad\text{for all
  }v_K\in \mathcal{L}_K^1(\mathcal{T}).
\end{align}
Due to Remark~\ref{rem:D-dirichlet} the operators $\varphi_i \frD_i$ preserve zero boundary values in the sense that~$\frD_i \colon \mathcal{L}^1_{K,\Gamma}(\mathcal{T}) \to \mathcal{L}^1_{K-1,\Gamma}(\omega_i)$ and~$\varphi_i \frD_i \colon \mathcal{L}^1_{K,\Gamma}(\mathcal{T}) \to \varphi_i \mathcal{L}^1_{K-1,\Gamma}(\omega_i)$. 
In particular, the operators $\varphi_i \frD_i$ are compatible with the function space decomposition~\eqref{eq:space-decomp-D}.
Since we use the same decomposition operators~$\varphi_i \frD_i$ and $\mathcal{L}^1_{K,\Gamma}(\mathcal{T}) \subset \mathcal{L}^1_K(\mathcal{T})$ we have for all $v_K \in \mathcal{L}^1_{K,\Gamma}(\mathcal{T})$ that
\begin{align}
  \sum_{i\in \vertices}\int_\Omega \varphi_i\, |\frD_i
  v_K|^2\dx 
  &\leq 
    K_1 \norm{v_K}_2^2 \quad \text{with }K_1 = \frac{2K+d}{K}.
\end{align}
The arguments in Section~\ref{sec:lower-eigenv-bound} lead to \ref{itm:ellipticity} with
\begin{align*}
 \textup{cond}_2(C_{\Gamma}|_{\mathcal{L}_{K,\Gamma}^1(\mathcal{T})}) \leq  \frac{1}{\lambda_{\min}(C_{\Gamma}|_{\mathcal{L}_{K,\Gamma}^1(\mathcal{T})})} 
 \leq K_1 
 =  \frac{2K+d}{K}.
\end{align*}

Overall, we obtain the same estimates for the operator~$C_{\Gamma}$ and the distance~$\delta$ for the slightly modified space decomposition~\eqref{eq:space-decomp-D}. 
This allows us to extend the results in the subsequent sections to the $L^2$-projection $Q_{\Gamma}$ mapping to $\mathcal{L}_{K,\Gamma}^1(\mathcal{T})$, see Section~\ref{sec:stability-zero-val}. 

\section{Weighted Estimates}
\label{sec:WeightedEst}
In this section we derive stability estimates for~$Q$ in weighted Lebesgue and Sobolev spaces. 
In Section~\ref{sec:weightedL2est} we 
deduce weighted estimates in $L^2$ from the decay estimate in Proposition~\ref{prop:decay}. 
This is the starting point for the proof of weighted $L^p$ (Section~\ref{sec:LpStab}) and Sobolev (Section~\ref{sec:SobEst}) stability. 
Notice that the latter parts  solely rely on the weighted $L^2$-estimate and the distance~$\delta$, that is, the result is independent of the design of $C$ and the decay
estimate itself. 
A similar approach to pass from weighted $L^2$-estimates to $L^p$ and $W^{1,p}$-estimates is used  in~\cite{CrouzeixThomee87,Boman06,ErikssonJohnson95}. 
For simplicity of the exposition we derive our results in the case without zero boundary values. 
Using Section~\ref{sec:dirichl-bound-valu} all results directly transfer to the other case and the corresponding estimates are collected in Subsection~\ref{sec:stability-zero-val}.

\subsection{Weighted $L^2$-Stability}\label{sec:weightedL2est}
We start with proving weighted $L^2$-estimates. 
The proof is based on the decay estimate in Section~\ref{sec:DecayWithC}. 
Throughout this section let $\delta$ be some (geodesic) distance function on $\mathcal{T}$ as in~\ref{itm:distance}.
We use the notation $\max_T f \coloneqq \max_{x \in T} f(x)$, for a function $f$ defined on $T$, and analogously for $\min$, $\esssup$ and $\essinf$. 
\begin{subequations}
  \label{eq:graded-weight}
  \begin{definition}[Grading]\label{def:grading-cont}
    We call a positive function $\rho \in L^1(\Omega)$ a weight with grading $\gamma_\rho \geq 1$ (with respect to the distance~$\delta$), if it satisfies the following conditions for all $T, T' \in \mathcal{T}$: 
    \begin{align}
      \label{eq:graded-weight1}
      \esssup_T \rho &\leq \gamma_\rho  \essinf_T \rho;
      \\
      \label{eq:graded-weight2}
      \esssup_{T'} \rho &\leq \gamma_\rho \esssup_T \rho
                          \quad
                          \text{and} \quad
                          \essinf_{T'} \rho \leq \gamma_\rho \essinf_T
                          \rho \quad \text{if $\delta(T,T') =1$.}
    \end{align}
  \end{definition}
By induction it follows from~\eqref{eq:graded-weight2} that for all $T,T' \in \mathcal{T}$
  \begin{align}
    \label{eq:graded-weight3}
    \esssup_{T'} \rho &\leq \gamma_\rho^{\delta(T,T')} \esssup_T \rho
                        \qquad
                        \text{and} \qquad
                        \essinf_{T'} \rho \leq \gamma_\rho^{\delta(T,T')} \essinf_T \rho.
  \end{align}
\end{subequations}
\begin{remark}[Properties of grading]\hfill
  \label{rem:weight}
  \begin{enumerate}
  \item 
  \label{itm:weight-inverse}
    The inverse of a weight~$\rho$ with grading $\gamma_\rho$ is also a weight with grading~$\gamma_\rho$.  
    \item 
    \label{itm:weight-product}
    If $\rho, \psi$ are weights with grading~$\gamma_\rho$ and $\gamma_\psi$, respectively, then the product $\rho \psi$ is a weight with grading $\gamma_{\rho \psi} \leq \gamma_{\rho} \gamma_{\psi}$. 
  \item \label{itm:weight-const}
    If $\rho$ is a weight with grading~$\gamma_\rho$, then we can define an equivalent piece-wise constant weight $\bar{\rho} \in \mathcal{L}^0_0(\mathcal{T})$ by
    \begin{align*}
      \bar{\rho}|_T &\coloneqq \esssup_T \rho.
    \end{align*}
    Then $\bar{\rho}$ is also a weight with grading~$\gamma_\rho$ and
    \begin{align*}
      \rho &\leq \bar{\rho} \leq \gamma_\rho\, \rho.
    \end{align*}
  \item 
  \label{itm:weight-local-est}
  In particular, for any local estimate, e.g., stability or inverse estimates, the corresponding weighted version for weights $\rho$ with grading $\gamma_{\rho}$ hold, with constant depending on $\gamma_{\rho}$. 
  \item 
  \label{itm:weight-continuous}
    If $\rho$ is a continuous weight, then assumption~\eqref{eq:graded-weight} is satisfied if and only if
    \begin{align*}
      \max_{T \in \mathcal{T}} \frac{\max_T \rho}{\min_T \rho} \leq \gamma_\rho.
    \end{align*}
    Indeed, if $\delta(T,T') =1$, then
    \begin{align*}
      \max_{T'} \rho &\leq \gamma_\rho \min_{T \cap T'} \rho \leq
                       \gamma_\rho \max_T \rho
                       \quad
                       \text{and} \quad
                       \min_{T'} \rho \leq \max_{T \cap T'} \rho \leq
                       \gamma_\rho \min_T \rho.
    \end{align*}
  \end{enumerate}
\end{remark}
A typical example for weights $\rho$ are powers of mesh size functions as introduced in Definition~\ref{def:MeshSizeFunction} below. 
For all weights with grading $\gamma_\rho = 1$, the weighted $L^2$-stability of $Q$ as stated in Theorem~\ref{thm:weighted-L2} below is trivial. 
For $\gamma_\rho >1$ the situation is more delicate. 
\begin{definition}[Worst grading]\label{def:WorstGrading}
  By $\gamma_\textup{max}$ we denote the worst grading in the sense that for all weights $\rho \in L^1(\Omega)$ with grading $\gamma_\rho< \gamma_\textup{max}$ there is a constant $C(\gamma_\rho) < \infty$ with
\begin{align*}
\lVert \rho Q u \rVert_2 \leq C(\gamma_\rho)\,\lVert\rho u \rVert_2\qquad\text{for all }u \in L^2(\Omega).
\end{align*} 
\end{definition}
Based on the decay estimate in Proposition~\ref{prop:decay} we achieve the following lower bound for the worst grading.
\begin{theorem}[Weighted $L^2$-stability]\label{thm:weighted-L2}
  For all weights~$\rho \in L^1(\Omega)$ with grading~$\gamma_\rho < 1/q$ for $q$ as in Proposition~\ref{prop:decay} one has that
  \begin{align*}
    \lVert \rho Q u \rVert_2 \leq \frac{6\gamma_\rho^3}{1-\gamma_\rho q}\,\lVert\rho u \rVert_2\qquad\text{for all }u  \in L^2(\Omega).
  \end{align*}
  In particular, one has that $\gamma_\textup{max} \geq 1/q$. 
\end{theorem}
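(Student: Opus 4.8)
The plan is to reduce to a piecewise-constant weight, then to estimate the weighted norm of $Qu$ by splitting $Qu$ according to the simplex on which it is evaluated, localizing $u$ to annular layers $\{T' : \delta(T,T') = k\}$ around each $T$, and applying the decay estimate of Proposition~\ref{prop:decay} to each layer. First I would invoke Remark~\ref{rem:weight}\ref{itm:weight-const} to replace $\rho$ by the equivalent piecewise-constant weight $\bar\rho \in \mathcal{L}^0_0(\mathcal{T})$, which costs only a factor $\gamma_\rho$ on each side and still has grading $\gamma_\rho$; working with $\bar\rho$ lets me write $\|\bar\rho Qu\|_2^2 = \sum_{T \in \mathcal{T}} \bar\rho|_T^2 \, \|Qu\|_{2,T}^2$ and $\|\bar\rho u\|_2^2 = \sum_{T'} \bar\rho|_{T'}^2 \, \|u\|_{2,T'}^2$ as genuine sums over simplices.

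Next, for a fixed $T$ I would decompose $u = \sum_{k \ge 0} \indicator_{L_k} u$, where $L_k := \{T' \in \mathcal{T} : \delta(T,T') = k\}$ is the $k$-th distance layer around $T$, so that $Qu|_T = \sum_k (\indicator_{\{T\}} Q(\indicator_{L_k} u))$. Proposition~\ref{prop:decay} with $L = \{T\}$, $L' = L_k$ gives $\|\indicator_{\{T\}} Q(\indicator_{L_k} u)\|_2 \le 2 q^{k-1} \|\indicator_{L_k} u\|_2$ (and $\le \|\indicator_{L_k} u\|_2$ for $k \le 1$). Multiplying by $\bar\rho|_T$ and using the grading bound \eqref{eq:graded-weight3} in the form $\bar\rho|_T \le \gamma_\rho^{k}\, \bar\rho|_{T'}$ for $T' \in L_k$, I get $\bar\rho|_T \,\|Qu|_T\|_2 \lesssim \sum_k q^{k} \gamma_\rho^{k} \sum_{T' \in L_k} \bar\rho|_{T'} \|u\|_{2,T'}$, up to harmless constants absorbing the $q^{-1}$ and the $k \le 1$ cases. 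Since $\gamma_\rho q < 1$, the geometric weight $(\gamma_\rho q)^k$ is summable, and a Cauchy--Schwarz argument in $k$ (splitting $(\gamma_\rho q)^k = (\gamma_\rho q)^{k/2}(\gamma_\rho q)^{k/2}$) together with summing over $T$ and swapping the order of summation — using that each $T'$ lies in the layer $L_k$ of only those $T$ with $\delta(T,T') = k$, and that $\sum_k$ of the contributions of a fixed $T'$ is again controlled by the same geometric series — yields $\|\bar\rho Qu\|_2 \lesssim \frac{1}{1 - \gamma_\rho q}\|\bar\rho u\|_2$. Tracking the constants ($2$ from the decay estimate, $q^{-1}$, two factors $\gamma_\rho$ from passing to $\bar\rho$, and the symmetry in the double sum) is where the explicit $6\gamma_\rho^3/(1-\gamma_\rho q)$ comes from.

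The main obstacle I expect is the bookkeeping in the double-sum exchange: after Cauchy--Schwarz in the layer index $k$ one is left, for each $T$, with $\bar\rho|_T^2 \|Qu|_T\|_2^2 \lesssim \big(\sum_k (\gamma_\rho q)^k\big)\sum_k (\gamma_\rho q)^k \sum_{T' \in L_k} \bar\rho|_{T'}^2\|u\|_{2,T'}^2$, and one must sum this over $T$ and recognize that for fixed $T'$ the inner double sum over $(T,k)$ with $\delta(T,T') = k$ is again a single geometric series in $k$ (now using \eqref{eq:graded-weight3} in the reverse direction, $\bar\rho|_{T'} \le \gamma_\rho^k \bar\rho|_T$ is \emph{not} what is needed — rather one keeps $\bar\rho|_{T'}$ and simply counts, so the only input is $\delta$-symmetry and the summability of $(\gamma_\rho q)^k$). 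Making this step clean without losing an extra uncontrolled combinatorial factor — the number of $T'$ in a layer is \emph{not} assumed bounded, which is exactly the point of using layers rather than individual simplices as in Remark~\ref{rem:CR} — is the delicate part; it works precisely because Proposition~\ref{prop:decay} already controls the \emph{whole} layer $L_k$ in one estimate. Finally, replacing $\bar\rho$ by $\rho$ via $\rho \le \bar\rho \le \gamma_\rho \rho$ gives the stated inequality and hence $\gamma_\textup{max} \ge 1/q$.
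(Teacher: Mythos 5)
Your argument breaks down at the double-sum exchange, and the failure is exactly the one that Remark~\ref{rem:CR} identifies in the Crouzeix--Thom\'ee approach. After applying Proposition~\ref{prop:decay} with $L=\{T\}$ a \emph{single} simplex, Cauchy--Schwarz in $k$, summing over $T$, and swapping the order of summation, the contribution of a fixed $T'$ is
\begin{align*}
  \bar\rho|_{T'}^2\,\norm{u}_{2,T'}^2 \;\sum_{T\in\mathcal{T}} (\gamma_\rho q)^{\delta(T,T')}
  \;=\; \bar\rho|_{T'}^2\,\norm{u}_{2,T'}^2 \;\sum_{k\geq 0} (\gamma_\rho q)^{k}\,\#\{T\colon \delta(T,T')=k\},
\end{align*}
and the cardinality of the distance-$k$ annulus around a fixed simplex is not controlled by any hypothesis of the theorem: for highly graded meshes it can grow geometrically in $k$ (these are precisely the constants $\alpha,\beta$ of Crouzeix--Thom\'ee), so the series need not be bounded by $C(\gamma_\rho)/(1-\gamma_\rho q)$ and may even diverge. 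Your remark that the counting problem is avoided ``because Proposition~\ref{prop:decay} already controls the whole layer in one estimate'' is only true on the source side $L'=L_k(T)$; the problem reappears on the target side once you sum over $T$, because you localized that side to individual simplices. Summability of $(\gamma_\rho q)^k$ over $k$ alone is not enough.

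The paper's proof uses a genuinely different layer decomposition that makes the counting issue disappear: the layers $L_i$ in~\eqref{def:layer} are \emph{level sets of the weight}, $L_i=\{T\colon \gamma_\rho^{i-1}<\max_T\rho\leq\gamma_\rho^i\}$, indexed over $i\in\mathbb{Z}$. They partition $\mathcal{T}$, the grading gives $\abs{i-j}\leq\delta(L_i,L_j)$, and Proposition~\ref{prop:decay} is applied with \emph{both} arguments being entire layers. The whole estimate then reduces to a Cauchy--Schwarz (Young-type convolution) inequality in the single integer index $i$, so no per-simplex count ever enters and the constant $6\gamma_\rho^3/(1-\gamma_\rho q)$ depends only on $\gamma_\rho$ and $q$. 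To repair your argument you would either have to adopt this weight-level decomposition or add the extra assumption on the growth of distance annuli that the paper is explicitly designed to avoid.
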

\begin{proof}
  If $\gamma_\rho=1$, then $\rho$ is a constant and the claim is obvious. 
  Thus, we assume that $\gamma_\rho>1$.
  For a weight $\rho\in L^1(\Omega)$ with grading $\gamma_\rho$ we define the layers
  \begin{align}\label{def:layer}
    L_i \coloneqq \{T \in \tria \colon \gamma_\rho^{i-1} < \max_T \rho  \leq \gamma_\rho^i \} \qquad \text{ for all } i \in \mathbb{Z}. 
  \end{align}
  The layers decompose the triangulation~$\mathcal{T}$ into non-overlapping sub-collections of simplices.
  If $T \in L_i$ and $T' \in L_j$, then by~\eqref{eq:graded-weight} it follows that
  \begin{align*}
    \gamma_\rho^{i-1} &< \max_T \rho \leq
                        \gamma_\rho^{\delta(T,T')} \max_{T'} \rho \leq 
                        \gamma_\rho^{\delta(T,T')+j}.
  \end{align*}
  Thus, $i-1 < \delta(T,T')+j$ or equivalently $i \leq \delta(T,T')+j$. 
  Hence, by symmetry we obtain $\abs{i-j} \leq \delta(T,T')$ for any $T \in L_i$ and any $T' \in L_j$, and thus we have that
  \begin{align}
    \label{eq:dist-Li-Lk}
   \abs{i-j} \leq  \delta(L_i, L_j).
  \end{align}
  Using that $(L_i)_{i \in \mathbb{Z}}$ decompose~$\tria$ and the definition of~$L_i$ in \eqref{def:layer}, we obtain that
  \begin{align}\label{eq:ProofWieghtedl2a}
    \begin{aligned}
      \bignorm{\rho Q u}_2^2
      &= \sum_{j\in\mathbb{Z}}  \norm{\indicator_{L_j} \rho
        Q u}_2^2
     \leq \sum_{j\in\mathbb{Z}} \big(\gamma_\rho^j \norm{\indicator_{L_j}
        Q u}_2\big)^2 
      \\
      &\leq \sum_{j\in\mathbb{Z}} \Big(\gamma_\rho^j \sum_{i\in\mathbb{Z}} \norm{\indicator_{L_j}
        Q(\indicator_{L_{j-i}}u)}_2\Big)^2. 
    \end{aligned}
  \end{align}
 By Proposition~\ref{prop:decay} we have that
  \begin{align*}
    \norm{\indicator_{L_j}
    Q(\indicator_{L_{j-i}}u)}_2
    \leq     2\,q^{\max \set{\delta(L_j,L_{j-i})-1,0}} \norm{\indicator_{ L_{j-i}} u}_2.
  \end{align*}
  In combination with $\abs{i} \leq \delta(L_j,L_{j-i})$ and $\gamma_{\rho}q<1$, from \eqref{eq:ProofWieghtedl2a} it follows that 
  \begin{align*}
    &\norm{\rho Q u}_2 
    \leq 2 \bigg( \sum_{j\in \mathbb{Z}} \Big(\gamma_\rho^j \sum_{i\in \mathbb{Z}}
      q^{\max \set{\abs{i}-1,0}}\norm{\indicator_{L_{j-i}}u}_2\Big)^2  \bigg)^{1/2}
    \\
    &\leq 2 \bigg( \sum_{j\in \mathbb{Z}} \Big(\gamma_\rho^j \sum_{i\in \mathbb{Z}}
      q^{\max \set{\abs{i}-1,0}}\norm{\indicator_{L_{j-i}}
      \gamma_\rho^{i-j+2} \rho u}_2\Big)^2  \bigg)^{1/2}
    \\
    &\leq 2 \gamma_\rho^2 \bigg( \sum_{j\in \mathbb{Z}} \Big(\sum_{i\in \mathbb{Z}} \gamma_\rho^i
      q^{\max\set{\abs{i}-1,0}}\norm{\indicator_{L_{j-i}}
      \rho u}_2\Big)^2  \bigg)^{1/2}
    \\
    &\leq 2 \gamma_\rho^3 \bigg( \sum_{j\in \mathbb{Z}} \Big(\sum_{i\in \mathbb{Z}}
      (\gamma_\rho q)^{\max\set{\abs{i}-1,0}}\norm{\indicator_{L_{j-i}} \rho u}_2\Big)^2  \bigg)^{1/2}
    \\
    &\leq 2 \gamma_\rho^3 \bigg( \sum_{j\in \mathbb{Z}} \Big[\Big(
      \sum_{k \in \mathbb{Z}} (\gamma_\rho q)^{\max \set{\abs{k}-1,0}}\Big) 
      \Big( \sum_{i\in \mathbb{Z}} \Big(
      (\gamma_\rho q)^{\max \set{\abs{i}-1,0}}
      \norm{\indicator_{L_{j-i}}\rho u}^2_2\Big) \Big]\bigg)^{1/2}\\
    &= 2 \gamma_\rho^3
      \Big( \sum_{i\in \mathbb{Z}} 
      (\gamma_\rho q)^{\max \set{\abs{i}-1,0}} \Big)
      \norm{\rho u}_2
      \leq \frac{6 \gamma_\rho^3}{1-\gamma_\rho q}\,
      \norm{\rho u}_2.
  \end{align*}
\end{proof}
\begin{remark}[Decay $q$]\label{rem:Decay}
 Proposition~\ref{prop:lower-bound-comp} and \eqref{eq:ConvSpeed2} show that Proposition~\ref{prop:decay} and therefore also Theorem~\ref{thm:weighted-L2} hold with
  \begin{align}\label{eq:qnew}
    q &\coloneqq q_\textup{new} \coloneqq \frac{\sqrt{2K+d} -
        \sqrt{K}}{\sqrt{2K+d} + \sqrt{K}}.
  \end{align}
  Notice, however, that Theorem~\ref{thm:weighted-L2} utilizes solely the decay estimate of Proposition~\ref{prop:decay}.
\end{remark}
An important tool in the following proofs is the maximal operator $M_\gamma$. 
The construction of~$M_\gamma$ is inspired by the \emph{miracle of extrapolation}. 
Indeed, the operator shares many properties with the Rubio de Francia operator~\cite{CruzUribeMartellPerez11}.
\begin{definition}[Maximal operator $M_{\gamma}$]
  \label{def:M-graded}
  For $\gamma>1$ let the maximal operator $M_\gamma\colon  \mathcal{L}^0_0(\tria) \to \mathcal{L}^0_0(\mathcal{T})$  be defined by
  \begin{align*}
    M_\gamma(v_0)|_T &\coloneqq \max_{T' \in \mathcal{T}} \big( \gamma^{-\delta(T,T')}
                    \bigabs{v_0|_{T'}} \big) \qquad \text{for all $v_0 \in \mathcal{L}^0_0(\mathcal{T})$.}
  \end{align*}
\end{definition}
Later we shall use this operator to show weighted $L^p$ and $W^{1,p}$-stability.
\begin{lemma}[Properties of $M_\gamma$]
  \label{lem:prop-M-gamma}
  For all $v_0 \in \mathcal{L}^0_0(\mathcal{T})$ 
  the following is satisfied: 
  \begin{enumerate}
  \item 
  \label{itm:prop-M-grading} (Grading) 
  The function $M_\gamma(v_0)$ is a weight with grading~$\gamma$.
  \item 
  \label{itm:prop-M-majorant}  (Majorant)
   It holds that $\abs{v_0} \leq M_\gamma(v_0)$ in $\Omega$.
  \item
  \label{itm:prop-M-powers} (Powers)
   For any $s >0$ one has $(M_\gamma(v_0))^s = M_{\gamma^s}(\abs{v_0}^s)$ in~$\Omega$.
  \end{enumerate}
\end{lemma}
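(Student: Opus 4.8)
The plan is to verify the three assertions in order, each by a short direct computation from the definition $M_\gamma(v_0)|_T = \max_{T' \in \tria}(\gamma^{-\delta(T,T')}|v_0|_{T'}|)$. Throughout, the key structural fact I would use is that $\delta$ is a metric on $\tria$ (property \ref{itm:distance}), in particular that it satisfies the triangle inequality $\delta(T,T'') \leq \delta(T,T') + \delta(T',T'')$ and is symmetric; also $\delta(T,T)=0$ so that the term $T'=T$ already gives $\gamma^0|v_0|_T| = |v_0|_T|$ in the maximum.

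First I would prove \ref{itm:prop-M-majorant} (Majorant), which is the quickest: taking $T'=T$ in the defining maximum yields $M_\gamma(v_0)|_T \geq \gamma^{-\delta(T,T)}|v_0|_T| = |v_0|_T|$, and since $v_0$ and $M_\gamma(v_0)$ are both piecewise constant on $\tria$ this gives $|v_0| \leq M_\gamma(v_0)$ on all of $\Omega$. Next I would prove \ref{itm:prop-M-grading} (Grading): positivity of $M_\gamma(v_0)$ is clear (assuming $v_0 \not\equiv 0$; if $v_0\equiv0$ the statement is vacuous or one works with the convention that the zero weight is graded), and condition \eqref{eq:graded-weight1} is automatic since $M_\gamma(v_0)$ is constant on each $T$ (so $\esssup_T = \essinf_T$). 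For \eqref{eq:graded-weight2}, fix $T, \hat T \in \tria$ and pick $T'$ realizing the maximum for $\hat T$, i.e.\ $M_\gamma(v_0)|_{\hat T} = \gamma^{-\delta(\hat T,T')}|v_0|_{T'}|$. By the triangle inequality $\delta(T,T') \leq \delta(T,\hat T) + \delta(\hat T,T')$, hence
\begin{align*}
  M_\gamma(v_0)|_{\hat T} = \gamma^{-\delta(\hat T,T')}|v_0|_{T'}| \leq \gamma^{\delta(T,\hat T)}\,\gamma^{-\delta(T,T')}|v_0|_{T'}| \leq \gamma^{\delta(T,\hat T)} M_\gamma(v_0)|_T,
\end{align*}
where the last step uses that $T'$ is a competitor in the maximum defining $M_\gamma(v_0)|_T$. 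Specializing to $\delta(T,\hat T)=1$ gives both inequalities in \eqref{eq:graded-weight2} (note $\esssup = \essinf$ here, so the two conditions coincide); actually this argument directly yields the stronger \eqref{eq:graded-weight3}.

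Finally I would prove \ref{itm:prop-M-powers} (Powers): for $s>0$ the map $t \mapsto t^s$ is increasing on $[0,\infty)$, so it commutes with $\max$ over a finite (or, here, bounded-below-by-zero) family; thus on each $T$,
\begin{align*}
  \big(M_\gamma(v_0)|_T\big)^s = \Big(\max_{T'\in\tria}\gamma^{-\delta(T,T')}|v_0|_{T'}|\Big)^s = \max_{T'\in\tria}\big(\gamma^{-\delta(T,T')}|v_0|_{T'}|\big)^s = \max_{T'\in\tria}(\gamma^s)^{-\delta(T,T')}\big||v_0|^s\big|_{T'} = M_{\gamma^s}(|v_0|^s)|_T.
\end{align*}
The only genuinely delicate point — and the one I would be careful about — is that the maximum over $T' \in \tria$ is taken over a possibly infinite collection, so I should note that the supremum is attained: since $\gamma > 1$ and $\delta$ takes values in $\mathbb{N}_0$, the factor $\gamma^{-\delta(T,T')} \to 0$ as $\delta(T,T')\to\infty$ while $|v_0|$ is bounded on $\Omega$ (as $v_0 \in \mathcal{L}^0_0(\tria) \subset L^\infty$ and, for $\tria$ finite, this is immediate), so only finitely many $T'$ can exceed, say, $\tfrac12|v_0|_T|$, and the max is realized. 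With that observation the commutation-with-$\max$ arguments in \ref{itm:prop-M-grading} and \ref{itm:prop-M-powers} are fully justified, and the lemma follows.
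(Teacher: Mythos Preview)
Your proof is correct and follows essentially the same approach as the paper: the majorant and powers properties are dispatched immediately from the definition, and the grading is obtained via the triangle inequality for~$\delta$ (the paper phrases this as a bound on the ratio $M_\gamma(v_0)|_T / M_\gamma(v_0)|_{T'}$, but the underlying step is identical). Your extra care about attainment of the maximum is unnecessary here since $\tria$ is a regular triangulation of a bounded polyhedral domain and hence finite, but it does no harm.
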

\begin{proof}
  Property~\ref{itm:prop-M-majorant} is obvious using~$T'=T$ in the definition of $M_{\gamma}$. 
  Also~\ref{itm:prop-M-powers} is immediate, since~$v_0$ is constant on each simplex $T\in \tria$. 
  It remains to prove~\ref{itm:prop-M-grading}. 
  Since~$v_0$ is piece-wise constant, \eqref{eq:graded-weight1} holds. 
 For $T,T' \in \mathcal{T}$ one has that
  \begin{align*}
    \frac{(M_\gamma(v_0))|_T}{(M_\gamma(v_0))|_{T'}}
    &\leq \frac{ \max_{T''} \big( \gamma^{-\delta(T,T'')}
      \bigabs{v_0|_{T''}} \big)}{  \max_{T''} \big( \gamma^{-\delta(T',T'')}
      \bigabs{v_0|_{T''}} \big)}
    \leq
      \max_{T''} \frac{ \gamma^{-\delta(T,T'')}
      \bigabs{v_0|_{T''}}}{ \gamma^{-\delta(T',T'')}
      \bigabs{v_0|_{T''}} }
    \leq  \gamma^{\delta(T,T')}.
  \end{align*}
  This proves~\eqref{eq:graded-weight2}. 
\end{proof}
\begin{remark}[Decay vs. weighted $L^2$-estimates]
  \label{rem:equivalence-decay-weighted}
  Theorem~\ref{thm:weighted-L2} is derived from the decay estimates in Proposition~\ref{prop:decay}. 
  In particular, for the decay parameter~$q$ we obtain weighted $L^2$-estimates for all weights~$\rho \in L^1(\Omega)$ with grading~$\gamma_\rho < 1/q$.

Let us show the reverse, i.e., that weighted $L^2$-estimates for weights with grading~$\gamma$ imply decay estimates with decay parameter $q\coloneqq 1/\gamma$.

  Let~$\gamma \geq 1$ and let $L,L' \subset \mathcal{T}$ be collections of simplices. 
  Then by Lemma~\ref{lem:prop-M-gamma} $M_\gamma(\indicator_L)$ is a weight with grading~$\gamma$ that satisfies $\indicator_L \leq M_\gamma(\indicator_L)$ and
  \begin{align*}
    \indicator_{L'} M_\gamma( \indicator_L)
    \leq     \gamma^{-\delta(L,L')}.
  \end{align*}
  Thus, for $u \in L^2(\Omega)$ the weighted $L^2$-stability implies that
  \begin{align*}
    \norm{ \indicator_{L}Q (\indicator_{L'} u )}_2
    &\leq  \norm{M_\gamma( \indicator_L) Q (\indicator_{L'} u)}_2
      \leq c\,\norm{M_\gamma( \indicator_L)
      \indicator_{L'} u}_2
      \leq c\,\gamma^{-\delta(L,L')} \norm{
      \indicator_{L'} u}_2.
  \end{align*}
  If $\delta(L,L')=0$, we calculate $\norm{ \indicator_{L}Q (\indicator_{L'} u)}_2 \leq \norm{ Q (\indicator_{L'} u)}_2 \leq \norm{\indicator_{L'} u}_2$. 
  Overall, 
  \begin{align}
    \label{eq:weighted-to-decay}
    \norm{ \indicator_{L}Q (\indicator_{L'} u)}_2
        &\leq \min \bigset{c\,\gamma^{-\delta(L,L')},1}\, \norm{
      \indicator_{L'} u}_2.
  \end{align}
This proves that weighted $L^2$-estimates for weights with grading~$\gamma$ imply decay estimates with decay parameter~$q = 1/\gamma$. 
\end{remark}

\subsection{Weighted $L^p$-Stability}\label{sec:LpStab}
Based on the stability of~$Q$ in weighted $L^2$-spaces we  derive weighted~$L^p$-estimates. 
These estimates require suitable mesh
gradings, characterized by the grading of a so-called mesh size function. 
\begin{definition}[Mesh size function]\label{def:MeshSizeFunction}
 A positive weight function $h\in L^1(\Omega)$ is called mesh size function, if the diameter $h_T \coloneqq \textup{diam}(T)$ is equivalent to $h|_T$ for all $T\in \mathcal{T}$ with hidden constants independent of~$T$, meaning that
  \begin{align}\label{eq:equiConstMeshFct}
    \essinf_T h \eqsim h_T \eqsim \esssup_T h.
  \end{align}
\end{definition}
\begin{lemma}[Volume decay]
  \label{lem:aux1} 
    Let $h \in L^1(\Omega)$ be a mesh size function with grading~$\gamma_h$. 
    If $ \gamma_h^{d} < \gamma$, then there exists a constant $c < \infty$ depending only on the shape regularity of $\mathcal{T}$
    and the (hidden) constants in \eqref{eq:equiConstMeshFct} such that
  \begin{align*}
    \sum_{T' \in \mathcal{T}} \abs{T'}\, \gamma^{-\delta(T,T')}
     &\leq c\,  \frac{\log(\gamma_h)}{\log(\gamma/\gamma_h^d)} \,\abs{T}\qquad\text{for all }T\in \mathcal{T}.
  \end{align*}
  \end{lemma}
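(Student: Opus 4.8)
The plan is to reduce the sum over all simplices $T'$ to a sum over distance layers $\{T' : \delta(T,T') = k\}$ for $k \in \mathbb{N}_0$, and then to estimate two competing quantities in each layer: the number of simplices at distance $k$ (which grows geometrically, controlled by shape regularity together with the grading of $h$), and the volume $|T'|$ of each such simplex (which we bound in terms of $|T|$ via the grading of the mesh size function). First I would fix $T \in \mathcal{T}$ and introduce the layer sets $N_k := \{T' \in \mathcal{T} : \delta(T,T') = k\}$, so that $\sum_{T' \in \mathcal{T}} |T'|\, \gamma^{-\delta(T,T')} = \sum_{k=0}^\infty \gamma^{-k} \sum_{T' \in N_k} |T'|$.

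The key estimate is a bound on $\sum_{T' \in N_k} |T'|$. For this I would argue that every $T' \in N_k$ is contained in the ball $B(x_T, c_1 \sum_{j=0}^{k} h_{T_j})$ where $T = T_0, T_1, \dots, T_k = T'$ is a connecting chain of neighbours; using $h_{T_j} \eqsim \essinf_{T_j} h \le \gamma_h^{\,j} \essinf_T h \eqsim \gamma_h^{\,j} h_T$ from the grading property \eqref{eq:graded-weight3} together with \eqref{eq:equiConstMeshFct}, the radius is bounded by $c_2 \gamma_h^{\,k} h_T$. Since the layers $N_0, \dots, N_k$ are pairwise disjoint and tile a region inside this ball, $\sum_{j=0}^k \sum_{T' \in N_j} |T'| \le |B(x_T, c_2 \gamma_h^{\,k} h_T)| \le c_3 (\gamma_h^{\,k} h_T)^d \le c_4 \gamma_h^{\,kd} |T|$, where the last step uses shape regularity to compare $h_T^d$ with $|T|$. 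In particular $\sum_{T' \in N_k} |T'| \le c_4 \gamma_h^{\,kd} |T|$.

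Plugging this in gives $\sum_{T' \in \mathcal{T}} |T'|\, \gamma^{-\delta(T,T')} \le c_4 |T| \sum_{k=0}^\infty (\gamma_h^{\,d}/\gamma)^k$, which converges precisely because $\gamma_h^{\,d} < \gamma$, with value $c_4 |T| \cdot \frac{1}{1 - \gamma_h^{\,d}/\gamma} = c_4 |T| \cdot \frac{\gamma}{\gamma - \gamma_h^{\,d}}$. To match the stated form of the bound one rewrites this: since $\gamma_h \ge 1$, one has $\log \gamma_h \ge 0$, and using the elementary inequality $\frac{1}{1-t} \le 1 + \frac{\text{const}}{-\log t}$ is not quite what is needed; instead I would observe that the cleanest route is to split the geometric series at the index $k_0 \sim \log(\gamma_h)/\log(\gamma/\gamma_h^d)$ — this is the scale at which $\gamma_h^{k}$ has grown by a factor comparable to the decay built up by $(\gamma/\gamma_h^d)^{-k}$ — or, more directly, to note that $\frac{1}{1 - \gamma_h^d/\gamma}$ and $\frac{\log \gamma_h}{\log(\gamma/\gamma_h^d)}$ are comparable up to absolute constants when these quantities are bounded, and the sum is trivially small when $\gamma_h$ is close to $1$; I would absorb this comparison into the constant $c$.

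The main obstacle I expect is precisely this last bookkeeping step: converting the transparent geometric-series bound $\frac{\gamma}{\gamma - \gamma_h^d}$ into the asymmetric logarithmic expression $\frac{\log(\gamma_h)}{\log(\gamma/\gamma_h^d)}$ claimed in the statement, which is the form actually needed later (presumably to track how the constant degenerates as $\gamma_h^d \uparrow \gamma$ and simultaneously vanishes as $\gamma_h \downarrow 1$). The geometric aspects — chaining neighbours, containment in a ball, shape regularity comparisons — are routine given the grading hypothesis \eqref{eq:graded-weight3} and the definition of $\delta$ via nodal neighbours; it is the sharp form of the scalar estimate on $\sum_k (\gamma_h^d/\gamma)^k$ that requires care, and I would handle it by the dyadic splitting of the series at the crossover index described above.
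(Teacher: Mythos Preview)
Your layer decomposition and ball-containment argument are correct and give the clean bound
\[
\sum_{T'\in\mathcal{T}}|T'|\,\gamma^{-\delta(T,T')}\;\le\;\frac{c}{1-\gamma_h^{\,d}/\gamma}\,|T|,
\]
with $c$ depending only on shape regularity. For the downstream applications (where $\gamma_h>1$ is fixed) this would suffice. But it does \emph{not} prove the lemma as stated: the claimed constant is $c\,\dfrac{\log\gamma_h}{\log(\gamma/\gamma_h^{\,d})}$ with $c$ independent of~$\gamma_h$ and~$\gamma$, and this expression tends to~$0$ as $\gamma_h\downarrow 1$ while your bound stays bounded below by~$1$. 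Your two proposed fixes do not close this gap. The comparability claim is false: $\frac{1}{1-\gamma_h^{\,d}/\gamma}$ and $\frac{\log\gamma_h}{\log(\gamma/\gamma_h^{\,d})}$ are not equivalent up to absolute constants (let $\gamma_h\to 1$). The dyadic splitting at $k_0\sim\log\gamma_h/\log(\gamma/\gamma_h^{\,d})$ is not a well-defined crossover for this series and cannot manufacture the factor $\log\gamma_h$, which simply does not appear in the layer-volume estimate.

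The paper obtains the stated form by a genuinely different route. After the same chaining estimate $|x-x'|\lesssim \min\{h_T,h_{T'}\}\,\gamma_h^{\,\delta(T,T')}$, it sets $\epsilon\coloneqq\tfrac12(\log\gamma/\log\gamma_h-d)>0$, so that $\gamma^{-\delta(T,T')}\le\gamma_h^{-\delta(T,T')(d+\epsilon)}\lesssim\bigl(\tfrac{\min\{h_T,h_{T'}\}}{h_T+h_{T'}+|x-x'|}\bigr)^{d+\epsilon}$. This converts the discrete sum into a continuous integral,
\[
|T|^{-1}\sum_{T'}|T'|\,\gamma^{-\delta(T,T')}\;\lesssim\;\int_\Omega\frac{h_T^{\,\epsilon}}{(h_T+|x-x'|)^{d+\epsilon}}\,dx'\;\lesssim\;\epsilon^{-1},
\]
and $\epsilon^{-1}=2\log\gamma_h/\log(\gamma/\gamma_h^{\,d})$ is exactly the claimed factor. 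The $\log\gamma_h$ in the numerator arises from the very definition of~$\epsilon$ (converting a power of~$\gamma$ into a power of~$\gamma_h$); it is intrinsic to the integral comparison and is not recoverable from the purely combinatorial geometric-series estimate.
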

\begin{proof}
  Let $T,T' \in \mathcal{T}$, $x\in T$ and $x' \in T'$ and denote $N\coloneqq \delta(T,T')$. 
  The property \ref{itm:distance} of the distance $\delta$ yields the existence of a chain $T_0,\dots, T_N$ of simplices with $T_0=T$, $T_N=T'$ and $\delta(T_{j-1},T_{j})=1$ for $j = 1, \ldots, N$.  
  With the grading of the function $\overline{h}\in \mathcal{L}_0^0(\mathcal{T})$ given by Remark~\ref{rem:weight}\ref{itm:weight-const} and the fact that $h_{T_j} \lesssim \overline{h}|_{T_j}$ we have that 
  \begin{align*}
    \abs{x-x'} &\leq \sum_{j=0}^N h_{T_j} \lesssim \sum_{j=0}^N \overline{h}|_{T_j} \lesssim \overline{h}|_{T} \sum_{j=0}^N \gamma_h^j \lesssim h_{T} \gamma_h^N.
  \end{align*}
  By symmetry we can replace~$h_{T}$ in the last term by $\min \set{h_{T}, h_{T'}}$. 
  Hence, the equivalence of $\overline{h}|_{T}$ and $h_T$ and the grading of $h$ yield that
  \begin{align}
    h_T + h_{T'} + \abs{x-x'} &\lesssim \min \set{h_T,h_{T'}}\,
                                \gamma_h^{\delta(T,T')}. 
  \end{align}
Then, by assumption we have $\epsilon \coloneqq \tfrac{1}{2} (\log(\gamma)/\log(\gamma_h) -d)>0$  and
  \begin{align}
    \label{est:gamma-h}
    \gamma^{-\delta(T,T')} 
    <  
    \gamma_h^{-\delta(T,T')(d+\epsilon)} 
    \lesssim \bigg( \frac{\min \set{h_T,h_{T'}}}{ h_T+h_{T'} +
    \abs{x-x'}} \bigg)^{d+\epsilon}.
  \end{align}
   Using the shape regularity to estimate $\abs{T}$ from below by $h_T^d$ we obtain
  \begin{align*}
    \abs{T}^{-1} \sum_{T' \in \mathcal{T}} \abs{T'}  \gamma^{-\delta(T,T')}
   &\lesssim \, \sum_{T' \in \mathcal{T}} \int_{T'} \frac{1}{\abs{T}} \bigg( \frac{\min
      \set{h_T,h_{T'}}}{ h_T+h_{T'} + 
      \abs{x-x'}} \bigg)^{d+\epsilon}\dx'\\
      &\lesssim \, \sum_{T' \in \mathcal{T}} \int_{T'} \frac{h_{T}^{\epsilon}}{ (h_T+h_{T'} + 
      \abs{x-x'})^{d+\epsilon}}\dx' \lesssim  \epsilon^{-1}.
  \end{align*}
 By definition of $\epsilon$ the claim follows.
\end{proof}
Recall~$M_\gamma$ from Definition~\ref{def:M-graded}. 
\begin{lemma}[$L^p$-stability of $M_\gamma$]
  \label{lem:M-gamma-Lp}
  Let $h \in L^1(\Omega)$ be a mesh size function with grading~$\gamma_h$, let
  $p \in [1, \infty]$ and let $\gamma>1$ be such that
  \begin{align*}
    \gamma &> \gamma_h^{d/p}.
  \end{align*}
  With the $p$-independent constant $c>0$ from Lemma~\ref{lem:aux1} there holds that
  \begin{align}\label{eq:TempInequ}
    \norm{M_\gamma(v_0)}_p &\leq \left( c\, \frac{\log(\gamma_h)}{\log(\gamma^p/\gamma_h^d)} \right) ^{1/p} \norm{v_0}_p\quad\text{for all }v_0 \in \mathcal{L}^0_0(\mathcal{T}).
  \end{align}
 The inequality in \eqref{eq:TempInequ} holds with equality for $p = \infty$ (with  constant $1$). 
\end{lemma}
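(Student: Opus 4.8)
The plan is to dispatch the endpoint $p=\infty$ by hand and to reduce the case $p<\infty$ to the volume-decay estimate of Lemma~\ref{lem:aux1}. For $p=\infty$, the majorant property (Lemma~\ref{lem:prop-M-gamma}\ref{itm:prop-M-majorant}) gives $\abs{v_0}\leq M_\gamma(v_0)$ pointwise, while from the definition of $M_\gamma$ and $\gamma>1$ one has, on every $T\in\mathcal{T}$, that $M_\gamma(v_0)|_T=\max_{T'\in\mathcal{T}}\big(\gamma^{-\delta(T,T')}\bigabs{v_0|_{T'}}\big)\leq\max_{T'\in\mathcal{T}}\bigabs{v_0|_{T'}}=\norm{v_0}_\infty$. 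Hence $\norm{M_\gamma(v_0)}_\infty=\norm{v_0}_\infty$, which is the asserted equality with constant $1$.

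For $1\leq p<\infty$ I would first observe that, enlarging the grading by an arbitrarily small amount if necessary (a weight with grading $\gamma_h$ has any larger grading, and both the hypothesis $\gamma>\gamma_h^{d/p}$ and the right-hand side of~\eqref{eq:TempInequ} are stable under a small increase of $\gamma_h$ followed by a limit), we may assume $\gamma_h>1$. Since $v_0$ and $M_\gamma(v_0)$ are piecewise constant, we write
\begin{align*}
  \norm{M_\gamma(v_0)}_p^p=\sum_{T\in\mathcal{T}}\abs{T}\,\Big(\max_{T'\in\mathcal{T}}\gamma^{-\delta(T,T')}\bigabs{v_0|_{T'}}\Big)^p.
\end{align*}
Applying the elementary bound $\big(\max_i a_i\big)^p\leq\sum_i a_i^p$ for nonnegative reals and $p\geq1$ with $a_{T'}=\gamma^{-\delta(T,T')}\bigabs{v_0|_{T'}}$, then using Tonelli and the symmetry of $\delta$, we obtain
\begin{align*}
  \norm{M_\gamma(v_0)}_p^p\leq\sum_{T\in\mathcal{T}}\abs{T}\sum_{T'\in\mathcal{T}}\gamma^{-p\delta(T,T')}\bigabs{v_0|_{T'}}^p=\sum_{T'\in\mathcal{T}}\bigabs{v_0|_{T'}}^p\sum_{T\in\mathcal{T}}\abs{T}\,(\gamma^p)^{-\delta(T',T)}.
\end{align*}

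The key step is to bound the inner sum by Lemma~\ref{lem:aux1} applied with $\gamma^p$ in place of $\gamma$: the hypothesis $\gamma>\gamma_h^{d/p}$ is exactly $\gamma_h^d<\gamma^p$, so the lemma gives, for every $T'\in\mathcal{T}$,
\begin{align*}
  \sum_{T\in\mathcal{T}}\abs{T}\,(\gamma^p)^{-\delta(T',T)}\leq c\,\frac{\log(\gamma_h)}{\log(\gamma^p/\gamma_h^d)}\,\abs{T'},
\end{align*}
with the same constant $c$ as in Lemma~\ref{lem:aux1}. Substituting this back and taking $p$-th roots yields~\eqref{eq:TempInequ}. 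The one point requiring care — more a matter of bookkeeping than a genuine obstacle — is that $c$ must be genuinely independent of $p$: this holds because in Lemma~\ref{lem:aux1} the constant depends only on the shape regularity of $\mathcal{T}$ and on the equivalence constants in~\eqref{eq:equiConstMeshFct}, with all dependence on the decay base confined to the explicit logarithmic factor, so replacing $\gamma$ by $\gamma^p$ leaves $c$ untouched. Everything else is the reindexing above together with a direct invocation of Lemma~\ref{lem:aux1}.
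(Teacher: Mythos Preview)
Your proof is correct and essentially the same as the paper's: both handle $p=\infty$ directly, bound the maximum by the sum, swap the order of summation, and invoke Lemma~\ref{lem:aux1} with $\gamma^p$ in place of $\gamma$. The only cosmetic difference is that the paper first treats $p=1$ and then reduces $p\in(1,\infty)$ to this case via the powers identity $(M_\gamma(v_0))^p=M_{\gamma^p}(\abs{v_0}^p)$ from Lemma~\ref{lem:prop-M-gamma}\ref{itm:prop-M-powers}, whereas you treat all finite $p$ in one stroke with $(\max_i a_i)^p\leq\sum_i a_i^p$; these are the same computation written two ways.
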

  \begin{proof}
  Let $v_0 \in \mathcal{L}^0_0(\tria)$.
  
  \textit{Step 1.}
    Let $p = \infty$, then \eqref{eq:TempInequ} follows from 
    \begin{align*}
      \big(M_\gamma(v_0)\big)|_T 
      & \leq \norm{v_0}_\infty\qquad\text{for all }T\in \mathcal{T}.
    \end{align*}
    \textit{Step 2.}
    Let us consider the case~$p=1$ with $\gamma > \gamma_h^d$. 
We find that
    \begin{align*}
      &\norm{M_\gamma(v_0)}_1
      = \sum_{T \in \mathcal{T}} \abs{T}\,
        (M_\gamma(v_0))|_T
     =
        \sum_{T \in \mathcal{T}} \abs{T}        \max_{T' \in \mathcal{T}} \big( \gamma^{-\delta(T,T')}
        \bigabs{v_0|_{T'}}\big)
      \\
      &\quad \leq
        \sum_{T \in \mathcal{T}} \abs{T} \sum_{T' \in \mathcal{T}} \big( \gamma^{-\delta(T,T')}
        \bigabs{v_0|_{T'}}\big)
      =
        \sum_{T' \in \mathcal{T}} \int_{T'} \abs{v_0}\dx \bigg( \sum_{T \in \mathcal{T}} \frac{\abs{T}}{\abs{T'}}\, \gamma^{-\delta(T,T')}
        \bigg).
    \end{align*} 
    By Lemma~\ref{lem:aux1} and $\gamma > \gamma_h^d$ we obtain
    \begin{align*}
      \norm{M_\gamma(v_0)}_1 &\leq c \, \frac{\log(\gamma_h)}{\log(\gamma/\gamma_h^d)}
                               \sum_{T' \in \mathcal{T}} \int_{T'}
                               \abs{v_0}\dx  = c\,  \frac{\log(\gamma_h)}{\log(\gamma/\gamma_h^d)} \norm{v_0}_1.
    \end{align*}
    \textit{Step 3.}
    Now let $p\in(1,\infty)$. 
    Lemma~\ref{lem:prop-M-gamma}\ref{itm:prop-M-powers} yields that
    \begin{align*}
      \norm{M_\gamma(v_0)}_p
      &
        =  
        \norm{ \abs{M_\gamma(v_0)}^p}_1^{1/p} =
        \norm{ M_{\gamma^p}(v_0^p)}_1^{ 1/p}.
    \end{align*}
    Since by assumption $\gamma_h^d < \gamma^p$, we can apply the estimate for~$p=1$ and obtain
    \begin{align*}
      \norm{M_\gamma(v_0)}_p
      &\leq
        \left( c\,  \frac{\log(\gamma_h)}{\log(\gamma^p/\gamma_h^d)} \bignorm{ v_0^p}_1\right)^{1/p}
        =  
         \left(c\, \frac{\log(\gamma_h)}{\log(\gamma^p/\gamma_h^d)}\right)^{1/p} \bignorm{ v_0}_p,
    \end{align*}
    which finishes the proof. 
  \end{proof}                                                                      
\begin{theorem}[Weighted $L^p$-estimate]\label{thm:weighted-Lp}
  Let $\rho \in L^1(\Omega)$ be a weight with grading $\gamma_{\rho}$, let $h \in L^1(\Omega)$ be a mesh size function with grading $\gamma_h$, and let $p \in [1,\infty]$. 
  With $\gamma_{\max}$ as in Definition~\ref{def:WorstGrading} assume that
  \begin{align*}
    \gamma_\rho \gamma_h^{d \abs{\frac12 - \frac 1p}} < \gamma_{\max}.
  \end{align*}
  Then, there exists a constant $c = c(\gamma_{\rho},\gamma_h, d,p,K,\chi_0)< \infty$ (with shape-regularity parameter $\chi_0$ of $\mathcal{T}$) such that 
  \begin{align*}
    \norm{\rho Q u}_p &\leq c\, \norm{\rho u}_p\qquad\text{for all }u \in L^p(\Omega).
  \end{align*}
\end{theorem}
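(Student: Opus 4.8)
The plan is to deduce the statement for general $p$ from the weighted $L^2$-stability of Theorem~\ref{thm:weighted-L2}, using the maximal operator $M_\gamma$ as the "extrapolation engine". First I would observe that the range $p\in[1,2)$ reduces to the range $p'\in(2,\infty]$ by a duality/self-adjointness argument: since $\mathcal T$ is finite, $\rho$ is bounded, so $\rho g\in L^1(\Omega)$ for $g\in L^{p'}(\Omega)$, and because $Qu\in\mathcal L^1_K(\mathcal T)$ and $Q$ is the $L^2$-orthogonal projection,
\begin{align*}
  \skp{\rho Qu}{g}=\skp{Qu}{\rho g}=\skp{Qu}{Q(\rho g)}=\skp{u}{Q(\rho g)}=\skp{\rho u}{\rho^{-1}Q(\rho g)}.
\end{align*}
By Remark~\ref{rem:weight}\ref{itm:weight-inverse} the weight $\rho^{-1}$ has grading $\gamma_\rho$, and $\abs{\tfrac12-\tfrac1{p'}}=\abs{\tfrac12-\tfrac1p}$, so the hypothesis is preserved; taking the supremum over $\norm g_{p'}\le1$ and applying Hölder's inequality then gives $\norm{\rho Qu}_p\le c\norm{\rho u}_p$ from the corresponding $L^{p'}$-bound. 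For $p=2$ the statement is immediate from Theorem~\ref{thm:weighted-L2} and Remark~\ref{rem:weight}\ref{itm:weight-const} (the assumption becoming $\gamma_\rho<\gamma_{\max}$), so the work is in the range $p\in(2,\infty]$.

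For $p\in(2,\infty]$ I would argue as follows. Since $\gamma_h^{d\abs{1/2-1/p}}\ge1$, the assumption $\gamma_\rho\gamma_h^{d\abs{1/2-1/p}}<\gamma_{\max}$ permits a choice of $\gamma>1$ with $\gamma_h^{2d\abs{1/2-1/p}}<\gamma<(\gamma_{\max}/\gamma_\rho)^2$. Because $2d\abs{1/2-1/p}=d/(p/2)'$, Lemma~\ref{lem:M-gamma-Lp} shows $M_\gamma$ is bounded on $L^{(p/2)'}(\Omega)$ with norm $\beta=\beta(\gamma_h,d,p,\gamma,\chi_0)$. Writing $\bar\rho\in\mathcal L^0_0(\mathcal T)$, $\bar\rho|_T:=\esssup_T\rho$ (a weight with grading $\gamma_\rho$, $\rho\le\bar\rho\le\gamma_\rho\rho$, by Remark~\ref{rem:weight}\ref{itm:weight-const}), I use $L^{p/2}$--$L^{(p/2)'}$-duality to get $\norm{\rho Qu}_p^2=\sup_{v\ge0,\,\norm v_{(p/2)'}\le1}\int_\Omega\abs{Qu}^2\rho^2v\dx$ and fix such a $v$. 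Let $\bar v\in\mathcal L^0_0(\mathcal T)$, $\bar v|_T:=\dashint_Tv\dx$, so $\norm{\bar v}_{(p/2)'}\le\norm v_{(p/2)'}\le1$ by Jensen. On each $T$ I bound $\rho^2\le\bar\rho^2|_T$ and combine the local inverse estimate $\esssup_T\abs{Qu}^2\lesssim|T|^{-1}\norm{Qu}_{2,T}^2$ with $\int_Tv\dx=|T|\,\bar v|_T$; summing over $T\in\mathcal T$ and using $\bar v\le M_\gamma\bar v$ (Lemma~\ref{lem:prop-M-gamma}\ref{itm:prop-M-majorant}) yields
\begin{align*}
  \int_\Omega\abs{Qu}^2\rho^2v\dx\lesssim\int_\Omega\bar\rho^2\,\bar v\,\abs{Qu}^2\dx\le\int_\Omega\Sigma^2\abs{Qu}^2\dx,\qquad\Sigma:=\bar\rho\,(M_\gamma\bar v)^{1/2}.
\end{align*}
By Lemma~\ref{lem:prop-M-gamma}\ref{itm:prop-M-grading} and Remark~\ref{rem:weight}\ref{itm:weight-product}, $\Sigma$ has grading at most $\gamma_\rho\gamma^{1/2}<\gamma_{\max}$, so Theorem~\ref{thm:weighted-L2} (via Definition~\ref{def:WorstGrading}) applies: $\int_\Omega\Sigma^2\abs{Qu}^2\dx=\norm{\Sigma Qu}_2^2\lesssim\norm{\Sigma u}_2^2=\int_\Omega(\Sigma\rho^{-1})^2(\rho u)^2\dx$. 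Finally Hölder's inequality with exponents $(p/2)'$ and $p/2$, the pointwise bound $\Sigma\rho^{-1}\le\gamma_\rho(M_\gamma\bar v)^{1/2}$, and the $L^{(p/2)'}$-boundedness of $M_\gamma$ give $\int_\Omega(\Sigma\rho^{-1})^2(\rho u)^2\dx\lesssim\norm{M_\gamma\bar v}_{(p/2)'}\norm{\rho u}_p^2\le\beta\norm{\bar v}_{(p/2)'}\norm{\rho u}_p^2\lesssim\norm{\rho u}_p^2$. Taking the supremum over $v$ completes this case.

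The hard part is organizing the middle step so that the auxiliary grading $\gamma$ can be chosen to satisfy three competing demands at once: $\gamma>\gamma_h^{d/(p/2)'}$ so that $M_\gamma$ is bounded on $L^{(p/2)'}$ (Lemma~\ref{lem:M-gamma-Lp}); $\gamma_\rho\gamma^{1/2}<\gamma_{\max}$ so that the test weight $\Sigma=\bar\rho(M_\gamma\bar v)^{1/2}$ lies in the regime where weighted $L^2$-stability holds; and $\Sigma^2$ still dominating $\bar\rho^2\bar v$ pointwise. The exponent bookkeeping $d/(p/2)'=2d\abs{1/2-1/p}$ shows these are jointly feasible precisely when $\gamma_\rho\gamma_h^{d\abs{1/2-1/p}}<\gamma_{\max}$, which is exactly the hypothesis and explains its continuous dependence on $p$; it is also where the use of the averaged majorant $\bar v$ (rather than a pointwise sup) is essential, since only then is $\norm{\bar v}_{(p/2)'}\le\norm v_{(p/2)'}$ available. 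Everything else — the local inverse estimate, the Hölder steps, and the duality reduction — is routine.
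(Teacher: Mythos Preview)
Your proof is correct. The reduction for $p\in[1,2)$ by duality with weight $\rho^{-1}$ is exactly the paper's Step~3, and $p=2$ is immediate. The substantive difference is in the range $p\in(2,\infty]$.

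The paper argues by a self-improving bootstrap: it writes $\norm{\bar\rho Qu}_p^p=\bignorm{\abs{\bar\rho Qu}^{(p-2)/2}(\bar\rho Qu)}_2^2$, replaces the factor $\abs{\bar\rho\,\overline{Qu}}^{(p-2)/2}$ by its $M_\gamma$-majorant (so that the resulting weight has grading $\gamma\gamma_\rho$), applies weighted $L^2$-stability, and then H\"older together with the $L^{2p/(p-2)}$-bound for $M_\gamma$ yields $\norm{\bar\rho Qu}_p^p\lesssim\norm{\bar\rho Qu}_p^{p-2}\norm{\bar\rho u}_p^2$. Here the auxiliary weight depends on $Qu$ itself; the step requires $\gamma_h^{d(1/2-1/p)}<\gamma<\gamma_{\max}/\gamma_\rho$, and the case $p=\infty$ is handled separately (the paper's Step~2) via the localized weight $M_\gamma(\indicator_T)\rho$. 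You instead dualize $\norm{\rho Qu}_p^2$ against $v\in L^{(p/2)'}$ and build the auxiliary weight $\Sigma=\bar\rho\,(M_\gamma\bar v)^{1/2}$ from the test function, which has grading $\gamma_\rho\gamma^{1/2}$; the compatible window becomes $\gamma_h^{2d\abs{1/2-1/p}}<\gamma<(\gamma_{\max}/\gamma_\rho)^2$, giving the same threshold on $p$. Your route is the Rubio de Francia extrapolation pattern the paper alludes to but does not implement; it has the advantage of treating $p=\infty$ uniformly (since $(p/2)'=1$ and Jensen becomes equality), so no separate endpoint argument is needed. Both approaches use the same ingredients---Theorem~\ref{thm:weighted-L2}, Lemma~\ref{lem:prop-M-gamma}, Lemma~\ref{lem:M-gamma-Lp}, a local inverse estimate, and Remark~\ref{rem:weight}---and yield the same constant dependence.
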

\begin{proof}
  Theorem~\ref{thm:weighted-L2} yields the statement for $p=2$. 
  To prove the claim for all other $p \in [1,\infty]$ we proceed in three steps. 
  Throughout the proof, let $u \in L^p(\Omega)$ for the respective exponent $p\in [1,\infty]$. 

  \textit{Step 1.}   
  Let us consider $p \in (2,\infty)$. 
  We choose $\gamma>1$ such that
  \begin{align*}
  \gamma_{\rho} \gamma_h^{d \left( \frac12 - \frac 1p\right)} <
  \gamma_{\rho} \gamma < \gamma_{\max}.
  \end{align*}
  Let $\overline{\rho} \in \mathcal{L}^0_0(\mathcal{T})$ be defined by $\overline{\rho}|_T \coloneqq \esssup_T \rho$ and analogously $\overline{Q u}|_T \coloneqq \esssup_T \abs{Q u}$ for all $T\in \mathcal{T}$.
   By Remark~\ref{rem:weight}\ref{itm:weight-const} the function $\overline{\rho}$ is also a weight with grading~$\gamma_\rho$ and it satisfies $\rho \leq \overline{\rho} \leq \gamma_\rho\, \rho$.
  The pointwise estimate in Lemma~\ref{lem:prop-M-gamma}\ref{itm:prop-M-majorant} leads to  
  \begin{align*}
    \norm{\overline{\rho} Q u}_p^p 
    &=
      \biggnorm{ \abs{ \overline{\rho} Q u}^{\frac{p-2}{2}}
      (\overline{\rho} Qu )}_2^2
      \leq
      \biggnorm{ \abs{ \overline{\rho} \overline{Qu }}^{\frac{p-2}{2}} (\overline{\rho} Qu)}_2^2
      \leq 
      \biggnorm{M_\gamma\big(\abs{\overline{\rho}  \overline{Qu}}^{\frac{p-2}{2}}\big)\, \overline{\rho} Qu}_2^2.
  \end{align*}
  Lemma~\ref{lem:M-gamma-Lp}\ref{itm:prop-M-grading} and Remark~\ref{rem:weight}\ref{itm:weight-product} show that $M_\gamma\big(\abs{\overline{\rho}  \overline{Qu}}^{\frac{p-2}{2}}\big)\, \overline{\rho}$ is a weight with grading $\gamma \gamma_{\rho}$. 
Since $\gamma \gamma_{\rho}<\gamma_{\max}$, the weighted $L^2$-stability of $Q$ in Theorem~\ref{thm:weighted-L2} yields that
  \begin{align*}
    \norm{\overline{\rho} Q u}_p^p \leq 
     \biggnorm{M_\gamma\big(\abs{\overline{\rho}  \overline{Q u}}^{\frac{p-2}{2}}\big)\, \overline{\rho} Qu}_2^2
    \lesssim
    \biggnorm{M_\gamma\big(\abs{\overline{\rho}
    \overline{Qu}}^{\frac{p-2}{2}}\big)\, \overline{\rho} u}_2^2. 
  \end{align*}
Applying H\"older's inequality we obtain 
  \begin{align}\label{eq:ProofTemp4442}
    \norm{\overline{\rho} Qu}_p^p
      &\lesssim
        \biggnorm{ M_\gamma\big(\abs{\overline{\rho} \overline{Qu}}^{\frac{p-2}{2}}\big)}_{\frac{2p}{p-2}}^{2}  \norm{\overline{\rho} u}_p^2.
  \end{align}
  Since $\overline{\rho} \overline{Qu} \in \mathcal{L}^0_{0}(\mathcal{T})$ and $\gamma_h^{d \left( \frac{2p}{p-2}\right)^{-1} } =  \gamma_h^{d \left(\frac 12 - \frac 1p\right) } <\gamma$,
   Lemma~\ref{lem:M-gamma-Lp} ensures the stability of $M_{\gamma}$. 
   The stability, \eqref{eq:ProofTemp4442} and an inverse estimate yield that
  \begin{align*}
    \norm{\overline{\rho} Qu}_p^p
    \lesssim
    \biggnorm{\abs{\overline{\rho} \overline{Qu}}^{\frac{p-2}{2}}}_{\frac{2p}{p-2}}^{2}  \norm{\overline{\rho} u}_p^2 
    \lesssim \norm{\overline{\rho} \overline{Qu}}_{p}^{p-2}  \, \norm{\overline{\rho} u}_p^2
    \lesssim 
    \norm{\overline{\rho} Q u}_{p}^{p-2}  \, \norm{\overline{\rho} u}_p^2.
  \end{align*}
  The relation between $\rho$ and $\overline{\rho}$ (Remark~\ref{rem:weight}\ref{itm:weight-const}) verifies the claim for $p \in (2,\infty)$.\\[0.5em]
  \textit{Step 2.}  
  Let $p=\infty$. For a weight $\rho$ choose $T \in \mathcal{T}$ such that $\norm{\rho Qu }_{\infty} = \norm{\rho Qu}_{\infty,T}$. 
  Further, let $\gamma>1$ such that $\gamma_{\rho} \gamma_h^{d/2} < \gamma_{\rho} \gamma <\gamma_{\max}$. 
First we shall apply the pointwise estimate in Lemma~\ref{lem:prop-M-gamma}\ref{itm:prop-M-majorant}. 
Then, since $\gamma \gamma_{\rho} < \gamma_{\max}$, the weighted $L^2$-estimate for $Qu$ according to Theorem~\ref{thm:weighted-L2} with weight $M_{\gamma}(\indicator_{T}) \rho$ is available. 
This leads to
\begin{align*}
\norm{ \indicator_{T} \rho Qu }_2
 \leq \norm{M_{\gamma}(\indicator_{T}) \rho Qu}_2 
\lesssim 
\norm{M_{\gamma}(\indicator_{T}) \rho u}_2.
\end{align*}
By H\"older's inequality and the stability of $M_{\gamma}$ in $L^{2}(\Omega)$, provided that $\gamma_h^{d/2}<\gamma$ we find that
\begin{align}\label{est:Lp-inf-1}
\norm{ \indicator_{T} \rho Qu }_2
\lesssim 
\norm{M_{\gamma}(\indicator_{T})}_2 \norm{\rho u}_{\infty} \lesssim 
\abs{T}^{\frac 12} \norm{\rho u}_\infty.
\end{align}
Since by Remark~\ref{rem:weight}\ref{itm:weight-local-est} $\rho$ is a weight, a weighted inverse estimate (with constant depending also on $\gamma_{\rho}$) is at our disposal. 
This and \eqref{est:Lp-inf-1} yield 
\begin{align*}
\norm{\rho Qu }_{\infty} = \max_T \abs{\rho Qu} \lesssim
\left(\dashint_{T} \abs{\rho Qu}^2 \dx \right)^{\frac 12}
\lesssim \norm{\rho u }_{\infty}.
\end{align*}
This shows the claim for $p=\infty$. \\[1em]
\textit{Step 3.}  
  Let $p \in [1,2)$. 
  Recall that by Remark~\ref{rem:weight}\ref{itm:weight-inverse} the inverse $\rho^{-1}$ is a weight with grading $\gamma_{\rho}$. 
  Hence, since $p'\geq 2$ and 
  \begin{align*}
    \gamma_{\rho}\gamma_h^{d\left( \frac 12 - \frac{1}{p'} \right)}
    = \gamma_{\rho} \gamma_h^{d\left( \frac 1p - \frac 12 \right)}
    < \gamma_{\max},
  \end{align*}
  we have that $\norm{\rho^{-1} Qv}_{p'} \lesssim \norm{\rho^{-1} v}_{p'}$, for $v\in L^{p'}(\Omega)$. 
For $w \in L^{p'}(\Omega)$ one has that
\begin{align*}
\skp{\rho Qu}{w} = \skp{Qu}{\rho w} = 
\skp{u}{Q(\rho w)} = \skp{\rho u}{\rho^{-1} Q(\rho w)}.
\end{align*}  
This identity, duality, and the $L^{p'}$-stability with weight $\rho^{-1}$ show that
  \begin{align*}
    \norm{\rho Q u }_p 
    = \sup_{w \in L^{p'}(\Omega)} \frac{\skp{\rho Qu}{w} }{\norm{w}_{p'}} 
      =
      \sup_{w \in L^{p'}(\Omega)} \frac{\skp{\rho u}{\rho^{-1} Q(\rho w)} }{\norm{w}_{p'}}
      \lesssim \norm{\rho u}_p.
  \end{align*}
  This finishes the proof.
\end{proof}

\subsection{Weighted Sobolev Stability}
\label{sec:SobEst}
In the following we deduce weighted $W^{1,p}$-stability from weighted $L^p$-estimates with the help of weighted inverse estimates and approximation properties of the Scott--Zhang interpolation operator. 
Let $h \in L^1(\Omega)$ be a mesh size function (Definition~\ref{def:MeshSizeFunction}) with grading $\gamma_h$. Recall the worst grading $\gamma_{\max}$ as in Definition~\ref{def:WorstGrading}. 
\begin{theorem}[Weighted $W^{1,p}$-estimate]\label{thm:weighted-Sobolev}
 Let $\rho \in L^1(\Omega)$ be a weight with grading $\gamma_{\rho} \geq 1$ and let $p \in [1, \infty]$. Assume that 
  \begin{align*}
    \gamma_\rho \gamma_h^{1+d \abs{\frac12 - \frac 1p}} < \gamma_{\max}.  
  \end{align*}
  Then there exists a constant $c = c(\gamma_{\rho},\gamma_h, d,p,K,\chi_0)< \infty$ (where $\chi_0$ denotes the shape regularity parameter of $\mathcal{T}$) such that
  \begin{align*}
    \norm{\rho \nabla Q u}_p &\leq c\, \norm{\rho \nabla u}_{p}\qquad\text{for all }u \in W^{1,p}(\Omega).
  \end{align*}
\end{theorem}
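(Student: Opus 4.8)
The plan is to reduce the $W^{1,p}$-estimate to the already-established weighted $L^p$-stability (Theorem~\ref{thm:weighted-Lp}) via the Scott--Zhang interpolation operator $\PiSZ$, which is simultaneously $W^{1,p}$-stable and locally $L^p$-stable with optimal approximation. First I would write, for $u\in W^{1,p}(\Omega)$,
\begin{align*}
  \nabla Q u = \nabla Q(u - \PiSZ u) + \nabla Q(\PiSZ u) = \nabla Q(u-\PiSZ u) + \nabla(\PiSZ u),
\end{align*}
where the last equality uses that $\PiSZ u \in \mathcal{L}^1_K(\mathcal{T})$ is fixed by $Q$ (assuming the Scott--Zhang operator maps into the same Lagrange space, which one arranges by choosing its polynomial degree $\geq K$; in fact one may take $Q$ of the constant-free part, but subtracting $\PiSZ u$ is cleaner). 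The second term is controlled directly: weighted $W^{1,p}$-stability of $\PiSZ$ (Remark~\ref{rem:weight}\ref{itm:weight-local-est} upgrades the standard local stability to the weighted version, since $\rho$ is a weight with grading $\gamma_\rho$) gives $\norm{\rho\nabla\PiSZ u}_p \lesssim \norm{\rho\nabla u}_p$ provided $\gamma_\rho\gamma_h$ is admissible — and it is, since $\gamma_\rho\gamma_h \le \gamma_\rho\gamma_h^{1+d|\frac12-\frac1p|} < \gamma_{\max}$.

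For the first term the idea is: apply a weighted inverse estimate on each simplex to pass from $\nabla Q(u-\PiSZ u)$ to $h^{-1}Q(u-\PiSZ u)$, then use $L^p$-stability of $Q$ with the \emph{modified} weight $\rho h^{-1}$, then use the approximation property of $\PiSZ$ to absorb the factor $h^{-1}$. Concretely,
\begin{align*}
  \norm{\rho\nabla Q(u-\PiSZ u)}_p
  \lesssim \norm{\rho\, h^{-1} Q(u-\PiSZ u)}_p,
\end{align*}
using that $h^{-1}$ is equivalent to a piecewise-constant weight $\overline{h}^{-1}$ with grading $\gamma_h$ (Remark~\ref{rem:weight}), so the inverse estimate may be applied elementwise with the hidden constant uniform. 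Now $\rho h^{-1}$ (or rather $\rho\,\overline{h}^{-1}$) is a weight with grading $\le \gamma_\rho\gamma_h$ by Remark~\ref{rem:weight}\ref{itm:weight-product}. The crucial check is the admissibility condition for Theorem~\ref{thm:weighted-Lp} applied with this weight: we need $(\gamma_\rho\gamma_h)\,\gamma_h^{d|\frac12-\frac1p|} < \gamma_{\max}$, i.e.\ exactly $\gamma_\rho\gamma_h^{1+d|\frac12-\frac1p|} < \gamma_{\max}$, which is precisely the hypothesis. Hence $\norm{\rho h^{-1} Q(u-\PiSZ u)}_p \lesssim \norm{\rho h^{-1}(u-\PiSZ u)}_p$.

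Finally I would invoke the weighted local approximation estimate for the Scott--Zhang operator, $\norm{h^{-1}(u-\PiSZ u)}_{p,T} \lesssim \norm{\nabla u}_{p,\omega_T}$ on the (enlarged) patch, upgraded to the weighted version using again that $\rho$ has grading $\gamma_\rho$ (so $\rho$ is comparable across the finitely many simplices of a patch, with constant depending on $\gamma_\rho$ and the shape regularity); summing the $p$-th powers over $T$ and using finite overlap of patches yields $\norm{\rho h^{-1}(u-\PiSZ u)}_p \lesssim \norm{\rho\nabla u}_p$. Combining the three parts gives the claim, with $c$ depending on $\gamma_\rho,\gamma_h,d,p,K$ and the shape-regularity parameter $\chi_0$. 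The main obstacle I anticipate is bookkeeping rather than depth: one must be careful that each passage — the elementwise inverse estimate, the patchwise approximation estimate, and especially the grading of the composite weight $\rho\overline{h}^{-1}$ — is compatible with the \emph{weighted} framework, so that every hidden constant genuinely depends only on $\gamma_\rho,\gamma_h$ and $\chi_0$; and one must ensure the Scott--Zhang operator is chosen to reproduce $\mathcal{L}^1_K(\mathcal{T})$ so that $Q\PiSZ u = \PiSZ u$. The sharpness of the exponent $1+d|\frac12-\frac1p|$ comes out automatically as the sum of the inverse-estimate cost ($\gamma_h^1$) and the $L^p$-interpolation cost ($\gamma_h^{d|\frac12-\frac1p|}$).
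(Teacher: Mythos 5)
Your proposal is correct and follows essentially the same route as the paper's proof: split off $\PiSZ u$ using $Q\PiSZ=\PiSZ$, apply a weighted inverse estimate to reduce $\nabla Q(u-\PiSZ u)$ to $h^{-1}Q(u-\PiSZ u)$, invoke Theorem~\ref{thm:weighted-Lp} with the composite weight $\rho h^{-1}$ of grading $\gamma_\rho\gamma_h$ (which is exactly what the hypothesis $\gamma_\rho\gamma_h^{1+d\abs{\frac12-\frac1p}}<\gamma_{\max}$ licenses), and finish with the weighted Scott--Zhang approximation and stability estimates. The only superfluous remark is the admissibility check for the $\PiSZ$-stability term, which is a purely local estimate and needs no condition involving $\gamma_{\max}$.
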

\begin{proof}
  Let $\PiSZ \colon W^{1,1}(\Omega) \to \mathcal{L}^1_K(\mathcal{T})$ denote the Scott--Zhang interpolation operator introduced in \cite{SZ.1990}. 
  Denote by $\omega_T$ the one-layer neighborhood of a simplex $T \in \tria$, and recall that $h$ is locally equivalent to $h_T \coloneqq \diameter(T)$, for $T\in \tria$. 
The operator $\PiSZ$ satisfies, for all $T \in \tria$ and $v \in W^{1,p}(\Omega)$ the estimates
\begin{align*}
 \norm{h^{-1}( v - \PiSZ(v))}_{p,T} \lesssim 
\norm{\nabla v}_{p,\omega_T}\quad \text{and}\quad
\norm{\nabla\PiSZ(v)}_{p,T} \lesssim 
\norm{\nabla v}_{p,\omega_T}.
\end{align*}
Thanks to Remark~\ref{rem:weight}\ref{itm:weight-local-est} the corresponding weighted local and global estimates are available and the same is true for the inverse estimates.

Because $Q$ is the identity on $\mathcal{L}^1_K(\tria)$, we find that $Q \PiSZ = \PiSZ$. 
Hence, a weighted inverse estimate and the local equivalence of $h$ with $h_T$ yield that
\begin{align*}
\norm{\rho \nabla(Q u)}_p 
&\leq
 \norm{\rho \nabla (Q(u- \PiSZ (u)))}_p +  \norm{\rho \nabla (\PiSZ (u))}_p \\
 &\lesssim  
 \norm{\rho h^{-1} (Q(u- \PiSZ (u)))}_p +  \norm{\rho \nabla (\PiSZ (u))}_p.
\end{align*}
Since $h$ and $\rho$ are weights with grading $\gamma_h$ and $\gamma_\rho$, by Remarks~\ref{rem:weight}\ref{itm:weight-inverse} and~\ref{itm:weight-product} the function $\rho h^{-1}$ is a weight with grading $\gamma_\rho \gamma_h$. 
By assumption we have that
 \begin{align*}
 (\gamma_{\rho} \gamma_h) \gamma_h^{d \abs{\frac 12 - \frac 1p}} < \gamma_{\max}.
 \end{align*}
Hence, by the weighted $L^p$-estimate in Theorem~\ref{thm:weighted-Lp} it follows that 
\begin{align*}
\norm{\rho h^{-1} (Q(u- \PiSZ (u)))}_p \lesssim
\norm{\rho h^{-1} (u- \PiSZ (u))}_p.
\end{align*} 
Finally, on both terms we apply the weighted versions of the above approximation and stability estimates for $\PiSZ$ to obtain 
\begin{align*}
\norm{\rho \nabla(Q u)}_p 
& \lesssim
\norm{\rho h^{-1} (u- \PiSZ (u))}_p +  \norm{\rho \nabla (\PiSZ (u))}_p\\
&\lesssim
\norm{\rho h^{-1} h \nabla u}_p +  \norm{\rho \nabla  u}_p \lesssim \norm{\rho \nabla u}_p.
\end{align*}
\end{proof}

\subsection{Zero Boundary Values}
\label{sec:stability-zero-val}

Recall the $L^2$-projection $Q_{\Gamma}$ mapping onto the space $\mathcal{L}^1_{K,\Gamma}(\tria)$ as introduced in Section~\ref{sec:dirichl-bound-valu}. 
For a resolved subset of the boundary $\Gamma \subset \partial \Omega$ the space $\mathcal{L}^1_{K,\Gamma}(\tria)$ contains the functions in $\mathcal{L}^1_K(\tria)$ with zero traces on $\Gamma$. 
Similarly we define $W^{1,p}_{\Gamma}(\Omega) = \{u \in W^{1,p}(\Omega)\colon u|_{\Gamma}= 0\}$. 
Then we obtain analogous stability results for $Q_{\Gamma}$ as for $Q$. 

\begin{theorem}[Weighted $L^p$- and $W^{1,p}$-stability]\label{thm:Lp-W1p-zero-trace}
  Let $\rho \in L^1(\Omega)$ be a weight with grading $\gamma_{\rho}$, let $h \in L^1(\Omega)$ be a mesh size function with grading $\gamma_h$, let $p \in [1,\infty]$ and let $\gamma_{\max}$ be as in Definition~\ref{def:WorstGrading} replacing $Q$ by $Q_{\Gamma}$. 
 \begin{enumerate}[label=(\roman*)]
 \item \label{itm:wLp-est-QD} ($L^p$-stability) If 
  \begin{align*}
    \gamma_\rho \gamma_h^{d \abs{\frac12 - \frac 1p}} < \gamma_{\max},
  \end{align*}
  then, there exists a constant $c = c(\gamma_{\rho},\gamma_h, d,p,K,\chi_0)< \infty$ (with shape-regularity parameter $\chi_0$ of $\mathcal{T}$) such that 
  \begin{align*}
    \norm{\rho Q_{\Gamma} u}_p &\leq c\, \norm{\rho u}_p\qquad\text{for all }u \in L^p(\Omega).
  \end{align*}
 \item \label{itm:wW1p-est-QD} ($W^{1,p}$-stability) If 
  \begin{align*}
    \gamma_\rho \gamma_h^{1+d \abs{\frac12 - \frac 1p}} < \gamma_{\max},
  \end{align*}
  then, there exists a constant $c = c(\gamma_{\rho},\gamma_h, d,p,K,\chi_0)< \infty$ such that 
  \begin{align*}
    \norm{\rho \nabla Q_{\Gamma} u}_p &\leq c\, \norm{\rho \nabla u}_p\qquad\text{for all }u \in W^{1,p}_{\Gamma}(\Omega).
  \end{align*}  
  \end{enumerate}
\end{theorem}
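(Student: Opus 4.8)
The plan is to transport to $Q_\Gamma$, essentially verbatim, the entire chain of results of Sections~\ref{sec:DecayWithC}--\ref{sec:SobEst} that was established there for $Q$, on the basis of the preparatory work of Section~\ref{sec:dirichl-bound-valu}. There it is already shown that the operator $C_\Gamma$ from~\eqref{eq:def_C_D} together with the nodal distance $\delta$ of Definition~\ref{def:Metric} satisfies the abstract conditions~\ref{itm:self-adjoint}--\ref{itm:locality} with $\mathcal{L}^1_K(\mathcal{T})$ replaced by $\mathcal{L}^1_{K,\Gamma}(\mathcal{T})$, and that $\operatorname{cond}_2(C_\Gamma|_{\mathcal{L}^1_{K,\Gamma}(\mathcal{T})}) \leq (2K+d)/K$. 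First I would note that $Q_\Gamma$ is self-adjoint and restricts to the identity on $\mathcal{L}^1_{K,\Gamma}(\mathcal{T})$, so the two-sided identity $C_\Gamma = Q_\Gamma C_\Gamma = C_\Gamma Q_\Gamma$ of Lemma~\ref{lem:two-sided-identity} holds. Hence the polynomially accelerated subspace correction and the decay estimate of Proposition~\ref{prop:decay} apply to $Q_\Gamma$ with decay parameter $q = q_{\textup{new}}$ from~\eqref{eq:qnew}, and Theorem~\ref{thm:weighted-L2} yields weighted $L^2$-stability of $Q_\Gamma$ for all weights $\rho$ with $\gamma_\rho < 1/q$. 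Thus the worst grading $\gamma_{\max}$ attached to $Q_\Gamma$ as in Definition~\ref{def:WorstGrading} satisfies $\gamma_{\max} \geq 1/q > 1$, and the ranges of exponents claimed in~\ref{itm:wLp-est-QD} and~\ref{itm:wW1p-est-QD} are non-empty.

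For part~\ref{itm:wLp-est-QD} I would repeat the proof of Theorem~\ref{thm:weighted-Lp} line by line, replacing $Q$ by $Q_\Gamma$. The only structural ingredients of that proof are the weighted $L^2$-stability below the threshold $\gamma_{\max}$ (now available for $Q_\Gamma$), the grading, majorant and power properties of the maximal operator $M_\gamma$ of Definition~\ref{def:M-graded} together with its $L^p$-boundedness from Lemmas~\ref{lem:prop-M-gamma} and~\ref{lem:M-gamma-Lp}, local weighted inverse estimates on the discrete space, and — only for $p\in[1,2)$ — the duality identity $\skp{\rho Q_\Gamma u}{w} = \skp{\rho u}{\rho^{-1} Q_\Gamma(\rho w)}$, which uses merely that $Q_\Gamma$ is self-adjoint. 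None of these is sensitive to the boundary constraint, so the same constants carry over.

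For part~\ref{itm:wW1p-est-QD} I would follow the proof of Theorem~\ref{thm:weighted-Sobolev}, with the single change that the Scott--Zhang interpolant is taken in its trace-preserving variant $\PiSZ\colon W^{1,1}_\Gamma(\Omega)\to \mathcal{L}^1_{K,\Gamma}(\mathcal{T})$; such an operator is available from~\cite{SZ.1990} by choosing the dual functionals at the nodes on $\Gamma$ to be supported on faces contained in $\Gamma$, and it retains the local estimates $\norm{h^{-1}(v-\PiSZ v)}_{p,T}\lesssim \norm{\nabla v}_{p,\omega_T}$ and $\norm{\nabla\PiSZ v}_{p,T}\lesssim\norm{\nabla v}_{p,\omega_T}$, which pass to weighted versions via Remark~\ref{rem:weight}\ref{itm:weight-local-est}. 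Since $Q_\Gamma$ is the identity on $\mathcal{L}^1_{K,\Gamma}(\mathcal{T})$ we have $Q_\Gamma\PiSZ = \PiSZ$; then, splitting $Q_\Gamma u = Q_\Gamma(u-\PiSZ u) + \PiSZ u$, estimating $\nabla Q_\Gamma(u-\PiSZ u)$ via a weighted inverse estimate by $\rho h^{-1} Q_\Gamma(u-\PiSZ u)$, applying part~\ref{itm:wLp-est-QD} with the weight $\rho h^{-1}$ of grading $\gamma_\rho\gamma_h$ (admissible since $\gamma_\rho\gamma_h\cdot\gamma_h^{d\abs{\frac12-\frac1p}} < \gamma_{\max}$ by hypothesis), and finally applying the weighted Scott--Zhang estimates to both terms, yields the bound by $\norm{\rho\nabla u}_p$.

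The computations at each step are mechanical copies of Sections~\ref{sec:weightedL2est}--\ref{sec:SobEst}; the one point that genuinely needs verification — and the main, albeit minor, obstacle — is the existence of the trace-preserving Scott--Zhang operator into $\mathcal{L}^1_{K,\Gamma}(\mathcal{T})$ with the quoted local estimates, which is standard and already contained in~\cite{SZ.1990}.
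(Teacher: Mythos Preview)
Your proposal is correct and follows essentially the same route as the paper: transfer the decay estimate and the weighted $L^2$-, $L^p$-, and $W^{1,p}$-arguments from $Q$ to $Q_\Gamma$ using Section~\ref{sec:dirichl-bound-valu}, and for the $W^{1,p}$-part invoke the trace-preserving Scott--Zhang operator. The paper's own proof is just a terse version of what you wrote.
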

\begin{proof}
Recall that in Section~\ref{sec:dirichl-bound-valu} we have noted that $C_{\Gamma}$ satisfies the same properties as $C$. 
Then, the decay estimate in Proposition~\ref{prop:decay}, the weighted $L^2$-stability in Theorem~\ref{thm:weighted-L2} and the weighted $L^p$-stability in Theorem~\ref{thm:weighted-Lp} translate, which proves \ref{itm:wLp-est-QD}. 
The proof of $W^{1,p}$-stability follows with the fact that the Scott--Zhang interpolation operator preserves discrete boundary values. 
\end{proof}

\section{Refinement Strategies}
\label{sec:results-diff-refin}
In this section we review several mesh refinement strategies. 
Moreover, we present the range of polynomial degrees~$K$ and exponents~$p$ for which unweighted $L^p$ and $W^{1,p}$-estimates are satisfied due to results in the previous sections.

\subsection{Overview of Refinement Strategies}
\label{subsec:RefStrat}
A number of mesh refinement strategies have been developed for two and higher space dimensions. 
Such strategies are used in particular in adaptive finite element methods (AFEM) to resolve singularities of solutions. 
To ensure $L^p$ and $W^{1,p}$-stability estimates it is important to resort to refinement strategies which guarantee a reasonable grading. 

To achieve a sufficiently small grading it is beneficial to avoid working with the diameters $\diameter(T)$ and use an equivalent mesh size function in the sense of Definition~\ref{def:MeshSizeFunction} instead. 
In fact, the grading of such a mesh size function~$h$ can be significantly smaller than the one of $\sum_{T \in \tria} \diameter(T)\indicator_T\in \mathcal{L}^0_0(\tria)$. 

We compare the following refinement strategies: 
\begin{tabbing}
  \quad \= 2D-RGB \qquad \=  Red-Green-Blue refinement by Carstensen
  \\
  \> 2D-NVB$^+$  \>
  Newest vertex bisection with additional assumption on~$\mathcal{T}_0$
  \\
  \> 2D-NVB$^-$ \qquad \>
  Newest vertex bisection without additional assumption on~$\mathcal{T}_0$
  \\
  \> 2D-RG \> Red-Green Refinement
  \\
  \> 2D-RG-GHS \>  Red-Green refinement modified by
  Gaspoz--Heine--Siebert 
  \\[1.5mm]
  \> BiSec-MT \> Generalization of newest vertex bisection to $d\geq 2$
  \\
  \> BiSecLG$(\alpha)$ \> Modification of BiSec-MT with limited grading for parameter $\alpha \in \mathbb{N}$
\end{tabbing}
A summary of the grading (with respect to the distance $\delta$ as in Definition~\ref{def:Metric}) available for each of those refinement strategies can be found in Table~\ref{tab:grading-overview}. 
\begin{table}[ht]
  \centering
\begin{TAB}(r)[2pt]{|l|c|l|}{|c|c|c|c|c|c|c|c|}
    Refinement Strategy& Grading $\gamma_h$ & Reference
    \\
    2D-RGB & $ 2^{ 3/2}$ & \cite{C.2004}
    \\
    2D-NVB$^+$ & $ 2$ & \cite{GHS.2016}
    \\
    2D-NVB$^-$ & $ 2^{ 3/2}$ & \cite{C.2004,GHS.2016}
    \\
    2D-RG & $4$ & \cite{GHS.2016} 
    \\
    2D-RG-GHS & $2$ & \cite{GHS.2016}
    \\
    BiSec-MT & $(2)$ & Conjecture~\ref{conj:NVB-grad}
    \\
    BiSecLG$(\alpha)$ & $2^{\alpha / d}$ &
    Section~\ref{sec:grad-pres-nvb}
    \\
  \end{TAB}
  \caption{Grading obtained by refinement strategies}  \label{tab:grading-overview}
\end{table}
Let us give some more insight into the grading estimates. 
Starting from an initial triangulation~$\mathcal{T}_0$ the refinement strategy is applied repeatedly. 
Usually, in the analysis of the mesh size function the initial triangulation $\mathcal{T}_0$ is assumed to be uniform (with possibly large constant). 
In particular, the mesh size function $h_0$ of~$\mathcal{T}_0$ is chosen as constant. 
This affects the equivalence constants of the mesh size function~$h$, see~\eqref{eq:equiConstMeshFct}.  
Then it is shown that the refinement rules ensure a limited grading. 

\textbf{(2D-RGB)} 
In \cite{C.2004} Carstensen introduces a refinement strategy based on red, green and blue refinement steps. 
He was the first to work with a regularized nodal mesh size function~\cite{Carstensen02}. 
The construction first assigns to each node~$j$ the value~$2^{-\ell(j)}$, where $\ell(j)$ it the highest refinement level of the adjacent triangles. 
Then he regularizes this nodal function using the edge based distance function. 
This approach establishes a mesh size function with grading~$\gamma_h \leq 2^{3/2}$, see \cite[Theorem~4.1]{Carstensen02}.

\textbf{(2D-NVB$^+$)} 
The newest vertex bisection was introduced by Mitchell in \cite{Mitchell91} for suitable initial triangulations~$\mathcal{T}_0$. 
In particular, it is assumed that each simplex of~$\mathcal{T}_0$ has a matching reflected neighbor, cf.\ \cite{Stevenson08}. 
In~\cite[Theorem~3.1]{GHS.2016} Gaspoz, Heine and Siebert obtain a mesh size function with grading~$\gamma_h \leq 2$ using a regularized mesh size function similar
to the one of~\cite{Carstensen02}.

\textbf{(2D-NVB$^{-}$)} 
Gaspoz, Heine and Siebert also study the newest vertex bisection of Mitchell \cite{Mitchell91} without additional
assumptions on~$\mathcal{T}_0$. 
In that case they work with a mesh size function with grading~$\gamma_h \leq 2^{3/2}$, see~\cite[Section~5.2]{GHS.2016}.

\textbf{(2D-RG)} 
Further, in \cite[Section~5.3]{GHS.2016} Gaspoz, Heine and Siebert consider the red-green refinement and construct a mesh function with grading~$\gamma_h=4$.

\textbf{(2D-RG-GHS)} 
Gaspoz, Heine and Siebert also introduce a modified red-green refinement that avoids green neighbors~\cite[Section~5.4]{GHS.2016}. 
By this they obtain a mesh size function with grading~$\gamma_h=2$.

To the best of our knowledge there are no grading estimates available for any adaptive refinement scheme in higher space dimensions. 

\textbf{(BiSec-MT)}
Maubauch \cite{Maubach95} and Traxler \cite{Traxler97} develop a generalization of the newest vertex bisection to higher space dimensions $d\geq 2$, see also  \cite{Stevenson08}. 
Private communication with Fernando Gaspoz \cite{G.2020} leads to the following conjecture\footnote{The same conjecture was formulated by one of the referees.}.  
\begin{conjecture}[Grading of BiSec-MT]\label{conj:NVB-grad} 
For dimensions $d \geq 2$ let $\tria_0$ be an initial triangulation satisfying the matching neighbor condition by Stevenson~\cite{Stevenson08}. 
Then, for any triangulation generated by the BiSec-MT algorithm there exists a mesh size function $h \in L^1(\Omega)$ with grading $\gamma_h =2$ for which the constants in \eqref{eq:equiConstMeshFct} only depend on $\tria_0$. 
\end{conjecture}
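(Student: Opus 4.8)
The plan is to transfer the two‑dimensional constructions of Carstensen \cite{C.2004,Carstensen02} and of Gaspoz, Heine and Siebert \cite{GHS.2016} to arbitrary dimension $d \ge 2$, that is, to build a \emph{regularized} mesh size function tailored to the BiSec-MT algorithm rather than working with $h_T = \textup{diam}(T)$ directly. Recall the basic structure of BiSec-MT \cite{Maubach95,Traxler97,Stevenson08}: every simplex carries a generation $g(T) \in \mathbb{N}_0$ with $g(\text{child}) = g(T)+1$, one bisection halves the volume, and after $d$ successive bisections a simplex is similar to its ancestor scaled by $\tfrac12$. Under the matching neighbor condition on $\tria_0$ all descendants therefore fall into finitely many similarity classes with uniform shape regularity, whence $\textup{diam}(T) \eqsim 2^{-g(T)/d}$ with constants depending only on $\tria_0$. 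The piecewise constant choice $h = \sum_{T \in \tria} 2^{-g(T)/d}\indicator_T$ already satisfies the equivalence \eqref{eq:equiConstMeshFct}, but it only has a grading of the form $2^{c(d)}$ with a possibly large $c(d)$; the task is precisely to regularize so as to bring the grading down to $\gamma_h = 2$.

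First I would introduce the nodal bookkeeping. To each vertex $z \in \vertices$ I assign an integer level $\ell(z)$ (vertices of $\tria_0$ get $\ell(z) = 0$, and a vertex created when bisecting the refinement edge of a generation-$g$ simplex gets the level dictated by the $d$-periodic self-similarity of BiSec-MT), and I set $h|_T \coloneqq \min_{z \in \vertices \cap T} 2^{-\ell(z)}$, possibly after an additional smoothing against the distance $\delta$ in the spirit of the operator $M_\gamma$ from Definition~\ref{def:M-graded}. The first point to verify is the equivalence \eqref{eq:equiConstMeshFct}: using shape regularity, the finiteness of the similarity classes and the refinement history one controls, for a fixed $T$, the levels of \emph{all} its $d+1$ vertices relative to $g(T)$, which yields $h|_T \eqsim 2^{-g(T)/d} \eqsim \textup{diam}(T)$ with constants depending only on $\tria_0$.

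The second, decisive point is the grading estimate of Definition~\ref{def:grading-cont}. Condition \eqref{eq:graded-weight1} is trivial for a piecewise constant $h$, so everything comes down to \eqref{eq:graded-weight2}: for nodal neighbors $T, T'$ (sharing a vertex $z_0$, i.e.\ $\delta(T,T') = 1$) one must show $\min_{z \in T'} 2^{-\ell(z)} \le 2\min_{z \in T} 2^{-\ell(z)}$ and the symmetric statement. This rests on two facts about BiSec-MT meshes: (i) the vertex values $2^{-\ell(z)}$ \emph{inside a single simplex} span a factor of at most $2$, the $d$-dimensional analogue of the elementary observation behind $\gamma_h = 2$ for 2D newest vertex bisection; and (ii) two simplices sharing a vertex have comparable generations, $\abs{g(T) - g(T')} \le c(d)$, which follows from conformity of BiSec-MT meshes and Stevenson's analysis of the completion procedure. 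Combining (i) and (ii) along the shared vertex $z_0$ pins the grading to $2$. An alternative, perhaps cleaner route exploits the $d$-periodic self-similarity to reduce the whole estimate to a finite check over the finitely many local refinement patterns around a vertex.

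The main obstacle is exactly the higher-dimensional combinatorics encoded in fact (i). In two dimensions the level of every vertex and the choice of the bisected edge are completely transparent, which is what lets the Gaspoz--Heine--Siebert argument run; for $d \ge 3$ the Maubach--Traxler tagging --- the type of a simplex modulo $d$, the rule selecting which edge is bisected, and the way refinement of one simplex propagates through the completion step needed to keep the mesh conforming --- is considerably more delicate. Proving that the vertex levels within one simplex differ by at most $1$ in the relevant normalization (rather than by some $c(d)$) seems to require a careful induction on the refinement history, tracking for each simplex the multiset of creation levels of its $d+1$ vertices and showing it remains ``consecutive'' in the appropriate sense; the matching neighbor hypothesis on $\tria_0$ is what should make this multiset structure uniform and allow the induction to close. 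This is the step that has so far resisted a proof, which is why the statement is recorded here only as a conjecture.
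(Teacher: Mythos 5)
The statement you are trying to prove is recorded in the paper only as a \emph{conjecture}: the authors explicitly state that the grading $\gamma_h = 2$ for BiSec-MT in dimensions $d \ge 3$ has not been proved, attribute the conjecture to private communication with Gaspoz, and precisely because no proof is available they introduce the modified algorithm BiSecLG$(\alpha)$, whose grading $2^{\alpha/d}$ they \emph{can} guarantee. There is therefore no proof in the paper to compare yours against, and your proposal does not close the gap either --- as you yourself concede in the final paragraph.

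To be concrete about where your argument stops being a proof: the overall architecture (regularized nodal mesh size function, equivalence $h|_T \eqsim 2^{-g(T)/d} \eqsim \diameter(T)$ from the finitely many similarity classes, reduction of \eqref{eq:graded-weight2} to a local statement about vertex levels) faithfully transfers the two-dimensional arguments of Carstensen and of Gaspoz--Heine--Siebert, and the equivalence \eqref{eq:equiConstMeshFct} part is indeed unproblematic (it is essentially (4.1) in Stevenson's paper, which the authors also use in Lemma~\ref{lem:DistStev}). The missing piece is your fact (i): that the normalized vertex levels within a single simplex of a conforming BiSec-MT mesh span a factor of at most $2$, uniformly in the refinement history. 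For $d \ge 3$ the Maubach--Traxler type/tag bookkeeping and, above all, the nonlocal completion step make it unclear that the multiset of vertex creation levels of a simplex stays ``consecutive''; your proposed induction on the refinement history is a plausible strategy but is not carried out, and no finite-check reduction is exhibited. Until that combinatorial lemma is established, the statement remains exactly what the paper calls it: a conjecture. If you need a rigorous grading bound in $d \ge 3$ today, the paper's own route is to enforce the level condition \eqref{eq:LG} algorithmically via BiSecLG$(d)$, which yields $\gamma_h = 2$ by construction while preserving the closure estimate \eqref{est:NVB-BDD}.
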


\textbf{(BiSecLG$(\alpha)$)}
In the following Section \ref{sec:grad-pres-nvb} for $\alpha \in \mathbb{N}$ we modify the BiSec-MT algorithm in such a way that there exists a mesh size function with grading $\gamma_h = 2^{\alpha/d}$. 
\subsubsection*{Stability Results} 
Let us discuss the stability results obtained for meshes with a given grading $\gamma_h$ for suitable choices of exponents $p$ and polynomial degrees $K$, depending on the dimension. 
Recall that with $q_\textup{new}$ as defined in \eqref{eq:qnew} we have the following lower bound on the worst grading  
\begin{align*}
\gamma_{\max} \geq \frac{1}{q_\textup{new}} = \frac{\sqrt{2K + d} + \sqrt{K}}{\sqrt{2K+d}-\sqrt{K}},
\end{align*}
as in \eqref{def:WorstGrading}. 
Thus, Theorems~\ref{thm:weighted-Lp} and \ref{thm:weighted-Sobolev} ensure (unweighted) $L^p$ and $W^{1,p}$-stability, provided that
\begin{align}\label{est:cond-stab}
\gamma_h^{d\abs{\frac 12 - \frac 1p}} < q_\textup{new}^{-1} \quad \text{ and }\quad
\gamma_h^{1+d\abs{\frac 12 - \frac 1p}} < q_\textup{new}^{-1}, 
\end{align}
respectively.  
The Tables~\ref{tab:grading-stability-2D} and \ref{tab:grading-stability-3D} present the ranges of $p$, for which $L^p$ and $W^{1,p}$-stability holds for given grading $\gamma_h$ and degree $K$, in dimensions $d\in \{2,3\}$. 
Note that the range of admissible exponents $p$ increases in $K$ for fixed $\gamma_h$ and $d$. 
\begin{table}[ht]
  \centering
\begin{TAB}(r,0.4cm,0.5cm)[2pt]{|c|c|c|c|l|}{|cc|c|cccc|cccc|cccc|}
    Grading  & Degree  &$L^p$-stability&$W^{1,p}$-stability & Refinement
    \\
    $\gamma_h $ &  $K $ & for $p \in$ & for $p \in $ & Strategies\\
        $ 2^{1/2}$ & $ 1$ & $[1,\infty]$ & $ [1,\infty]$ & BiSecLG$(1)$
    \\
     $ 2$ & $ 1$ & $[1,\infty]$ & $ [1.2619,4.8188]$ & NVB$^+$, RG-GHS
    \\
      & $2$ & $[1,\infty]$ & $ [1.0527,19.9937] $ & 
    \\
      & $3$ & $[1,\infty]$ & $[1,\infty]$ & 
    \\
      & $\infty$& $[1,\infty]$ & $[1,\infty]$ &  
	\\
     $ 2^{3/2}$ & $1$& $[1,\infty]$ & $ [1.8928,2.1200]$ & RGB, NVB$^-$ 
    \\
      & $2$& $[1,\infty]$ & $[1.5790,2.7271] $ &  
    \\    
      & $3$& $[1,\infty]$ & $ [1.4589,3.1794]$ &  
    \\       
    & $\infty$& $[1,\infty]$ & $ [1.1797, 6.5660]$ &  
	\\
     $ 4$ & $1$ & $ [1.1158,9.6376]$ & $\emptyset$ & RG 
    \\
      & $2$ & $ [1.0257,39.9874]$ & $\emptyset$ &  
    \\
      & $3$ & $[1,\infty]$ & $ [1.9452,2.0580]$ &  
    \\
        & $\infty$& $[1,\infty]$ & $[1.5729, 2.7455]$ &  
\\
  \end{TAB}
  \caption{Guaranteed stability estimates in 2D}  \label{tab:grading-stability-2D}
\end{table}

\begin{table}[ht]
  \centering
\begin{TAB}[3pt]{|c|c|c|c|l|}{|cc|c|cccc|} 
\hline
    Grading  & Degree  &$L^p$-stability&$W^{1,p}$-stability & Refinement 
    \\
    $\gamma_h $ &  $K $ & for $p \in$ & for $p \in $ & Strategies\\
     $ 2^{1/3}$ & $ 1$ & $[1,\infty]$ & $ [1,\infty]$ & BiSecLG$(1)$ \\
     $ 2$ & $ 1$ & $[1.0387,26.9019]$ & $ [1.5886,2.6990]$ & BiSec-MT (conjecture) and 
    \\
      & $2$ & $[1,\infty]$ & $ [1.3508,3.8511] $ & BiSecLG$(3)$
    \\
      & $3$ & $[1,\infty]$ & $[1.2501,4.9997]$ & 
    \\
          & $\infty$ & $[1,\infty]$ & $[1,\infty]$ & 
   \\
  \end{TAB}
  \caption{Guaranteed stability estimates in 3D}  \label{tab:grading-stability-3D}
\end{table}
In particular, considering \eqref{est:cond-stab} we have the special cases of 
\begin{enumerate}
\item $L^{1}$ and $L^{\infty}$-stability, if $\gamma_h^{d/2} < q_\textup{new}^{-1}$,
\item 
$W^{1,2}$-stability, if $\gamma_h < q_\textup{new}^{-1}$,
\item $W^{1,1}$ and $W^{1,\infty}$-stability, if $\gamma_h ^{1 + d/2} < q_\textup{new}^{-1}$.
\end{enumerate}
Let us discuss our results in the context of those ones in the literature with comparable assumption on the grading. 
Recall that the results on $L^p$ and $W^{1,p}$-stability in Section~\ref{sec:WeightedEst} are independent of the construction in the previous sections. 
Thus, we can determine the admissible range of parameters by considering the respective values of the decay $q$, cf. Table~\ref{tab:ComparBY} or bounds on $\gamma_{\max}$ obtained in the literature. 

\begin{table}[ht]
  \centering
\begin{TAB}[3pt]{|c|c|c|l|}{|c|ccc|cccc|cc|}%
    Grading $\gamma_h  $ & Reference & Range of $K $ & Refinement Strategies
    \\
     $ 2$                    &
                   \cite{BramblePasciakSteinbach02,C.2004} & $ \{1\}$
                   & NVB$^+$, RG-GHS
                   \\
 & \cite{BanYse14, GHS.2016} & $ \{1,2,\dots , 12\}$ & 
    \\
      & Theorem~\ref{thm:weighted-Sobolev} & $\{1,2,\dots\}$ & 
       \\
                   $ 2^{3/2}$ & \cite{BramblePasciakSteinbach02,C.2004} & $ \{1\}$ & RGB, NVB$^-$
    \\
      &  \cite{BanYse14, GHS.2016} & $ \{1, 3,4, \ldots, 9 \}$ & \\  
       &  \cite{GaspozHeineSiebert19pre} & $ \{1,2,3,4\}$ & \\  
         & Theorem~\ref{thm:weighted-Sobolev} & $\{1,2,\dots\}$ & 
       \\ 
    $4$ & \cite{GaspozHeineSiebert19pre} & $\{2,3,4\}$ & RG\\
      & Theorem~\ref{thm:weighted-Sobolev} & $\{3,4,\dots\}$ & 
    \\
  \end{TAB}
  \caption{Guaranteed range of $K$ for $W^{1,2}$-stability in 2D}
    \label{tab:H1stability-2D}
\end{table}
\begin{table}[ht]
  \centering
\begin{TAB}[3pt]{|c|c|c|l|}{|c|cc|cc|}
 Grading $\gamma_h  $ & Reference & Range of $K $ & Refinement Strategies \\
  $ 2^{1/3}$ & \cite{BanYse14}
   & $\{1,2,\dots, 13 \}$ & BiSecLG$(1)$ \\
       & Theorem~\ref{thm:weighted-Sobolev}  & $\{1,2,\dots \}$ & \\
        $ 2$ & \cite{BanYse14} & $\{1,2,\dots,7 \}$ & BiSec-MT (conjecture) and
    \\
      & Theorem~\ref{thm:weighted-Sobolev} & $\{1,2,\dots\}$ & 
       BiSecLG$(3)$
       \\
\end{TAB}
  \caption{Guaranteed range of $K$ for $W^{1,2}$-stability in 3D}
    \label{tab:H1stability-3D}
\end{table}
Tables~\ref{tab:H1stability-2D} and \ref{tab:H1stability-3D} compare the ranges of $K$ for which $W^{1,2}$-stability is achieved for given grading $\gamma_h$. 
Note that the results contained in \cite{BanYse14,GHS.2016} can be read off from the values of the decay $q_{\textup{BY}}$ contained in Table~\ref{tab:ComparBY}. 
The comparison shows that our estimates improve existing results for all $d\geq 2$ and $K\in \mathbb{N}$, except of the case $K=2$, dimension $d=2$ and $\gamma_h = 4$ covered in \cite{GaspozHeineSiebert19pre}. 
More specifically, for $d=2$ we have $W^{1,2}$-stability for all polynomial degrees $K \in \mathbb{N}$ whenever $\gamma_h\leq 2^{3/2}$. 
This includes in particular triangulations obtained by the newest vertex bisection. 
For $d=3$ assuming the conjectured grading $\gamma_h =  2$ for the BiSec-MT algorithm, we obtain $W^{1,2}$-stability for all $K  \in \mathbb{N}$. 
Since this grading has not been proved yet in the following subsection we show that the modification BiSecLG$(d)$ of BiSec-MT enforces grading $\gamma_h = 2$. 
For dimension $d\leq 6$ this yields   
 $W^{1,2}$-stability for all $K\in \mathbb{N}$. 

For general $p \in [1,\infty]$, we are not aware of any results on $L^p$ and $W^{1,p}$-stability for realistic gradings for $d \in \{2,3\}$.  
This includes the newest bisection refinement and variants thereof.  
For $d = 2$ and $\gamma_h \leq 2^{3/2}$ (including the newest vertex bisection) we obtain $L^{\infty}$-stability for any $K \geq1$, see Table~\ref{tab:grading-stability-2D}. 
For $\gamma_h = 2$ (including the case NVB$^+$) $W^{1,\infty}$-stability is satisfied for all $K \geq 3$. 
Furthermore, in dimension $d=3$ we have $L^{\infty}$-stability if $\gamma_h \leq 2$ for all $K \geq 2$, see Table~\ref{tab:grading-stability-3D}. 
This includes both the BiSec-MT algorithm for the conjectured grading $\gamma_h = 2$ as well as the BiSecLG$(\alpha)$ algorithm for $\alpha = 3$ presented in the following subsection.  
All remaining stability results can be obtained using the BiSecLG$(1)$ algorithm leading to grading $\gamma_h^{1/d}$, see \eqref{eq:grad-BiSecLG} below. 
Indeed, Table~\ref{tab:grading-stability-2D} shows that in 2D $W^{1,\infty}$-stability holds also for $K=1,2$, and Table \ref{tab:grading-stability-3D} shows that in 3D both $L^{\infty}$-stability is satisfied for $K=1$ and $W^{1,\infty}$-stability holds for any $K \in \mathbb{N}$. 

\subsection{Bisection with Limited Grading (BiSecLG)}
\label{sec:grad-pres-nvb}
In this section we propose a variant of the BiSec-MT algorithm that guarantees a reduced grading in general dimensions $d \geq 2$. 
Analogously as for the modification in \cite{DemlowStevenson2011} each refinement cycle is accompanied by an additional refinement enforcing a grading condition on the refined triangulation. 
Let us assume that~$\mathcal{T}_0$ is a given initial triangulation satisfying the matching neighbor condition by Stevenson~\cite[Section~4]{Stevenson08}. 
An important feature of the BiSec-MT algorithm is the fact that the number of additional simplices generated by repeated refinements can be controlled by the number of simplices marked for refinement. 
For $d=2$ dimensions the proof goes back to Binev, Dahmen and DeVore~\cite{BinDahDeV04}. 
In \cite{Stevenson08} Stevenson generalizes the result to all dimensions $d \in \mathbb{N}$. 
More specifically, let $(\mathcal{T}_n)_{n \in \mathbb{N}_0}$ denote a sequence of triangulations obtained by the refinement strategy which consists of marking simplices for refinement, bisecting and applying the conformal closure routine. 
Let $\mathcal{M}_{m} \subset \mathcal{T}_m$ for $m\in \mathbb{N}_0$ denote the set of simplices marked for refinement, then
\begin{align}\label{est:NVB-BDD}
\# \mathcal{T}_n - \# \mathcal{T}_0 \lesssim \sum_{m=0}^{n-1} \# \mathcal{M}_m\quad \text{ for all } n \in \mathbb{N}. 
\end{align} 
This estimate is essential in the proof of optimal convergence rates in adaptive finite element methods, cf.~\cite{Stevenson07,CarstensenFeischlPagePraetorius14,DieningKreuzerStevenson16}. 

For $\alpha \in \mathbb{N}$ we  propose the modification BiSecLG$(\alpha)$ of BiSec-MT producing a mesh size function~$h$ with grading~$\gamma_h = 2^{\alpha/d}$ in $d \geq 2$ dimensions while preserving estimate \eqref{est:NVB-BDD}, analogously as done in \cite{DemlowStevenson2011}. 
Let us first describe the refinement algorithm. 
We proceed as in~\cite{Stevenson08} and start from an initial triangulation~$\mathcal{T}_0$ satisfying the matching neighbor condition. 
All simplices of~$\mathcal{T}_0$ have level $\ell = 0$ and 
each bisection of a simplex increases its level by one. 
We modify the refinement routine of Stevenson such that all simplices that touch have levels differing by
at most $\alpha$ for fixed $\alpha \in \mathbb{N}$. 
More precisely, we say that~$\mathcal{T}$ arising from $\tria_0$ through bisection has $\alpha$-\textit{limited grading} if
\begin{align}
  \label{eq:LG}
  \abs{\ell(T) - \ell(T')} \leq \alpha \qquad \text{for all $T,T' \in
  \mathcal{T}$ with $T \cap T' \neq \emptyset$.}
\end{align}
Note that while each bisection halves the volume, $d$ uniform bisection steps are required to halve the diameter. 
If $\mathcal{T}$ arises from $\tria_0$ through bisections and has limited grading \eqref{eq:LG}, then the local mesh size
function~$h|_T \coloneqq 2^{-\ell(T)/d}$ admits the grading
\begin{align}\label{eq:grad-BiSecLG}
  \gamma_h &\leq 2^{\alpha/d}.
\end{align}
Condition \eqref{eq:LG} is the point in which BiSecLG$(\alpha)$ differs from the modification in \cite{DemlowStevenson2011}. 
In fact their algorithm ensures a limited grading with grading notion based on the Euclidean distance between simplices, as used in Lemma~\ref{lem:DistStev} below. 
Note that they obtain validity of \eqref{est:NVB-BDD} by similar arguments as the ones used in the following. 

Let $\texttt{refine}(\mathcal{T},T)$ denote the triangulation arising from Stevenson's refinement routine, which bisects the simplex~$T \in \mathcal{T}$ and applies the conformal closure. 
Let $\texttt{refine}(\mathcal{T},\mathcal{M})$ be the smallest conforming refinement of $\mathcal{T}$ such that the intersection with $\mathcal{M} \subset \mathcal{T}$ is empty. 
Note that this refinement results from a successive application of $\texttt{refine}(\mathcal{T},T)$ for $T \in \mathcal{M}$, see Theorem~5.1 and Section~6 in \cite{Stevenson08}. 
The following algorithm displays the modified refinement routine $\texttt{refine-LG}(\mathcal{T},\mathcal{M})$ for some $\alpha \in \mathbb{N}$. 

\begin{algorithm}[H]
  \SetAlgoLined
  \TitleOfAlgo{Bisection with Limited Grading (BiSecLG$(\alpha)$):}
  \KwData{Partition~$\mathcal{T}_0 \coloneqq \mathcal{T}$, marked simplices~$\mathcal{M}_0 \coloneqq \mathcal{M} \subset \mathcal{T}$ and $m\coloneqq 0$}
  \KwOut{Smallest conforming refinement of~$\mathcal{T}$ with limited grading~\eqref{eq:LG}}
  \Repeat{$\mathcal{M}_{m}=\emptyset$}{
    Increase $m \coloneqq m+1$; \\
	Set the triangulation $\mathcal{T}_{m} \coloneqq\texttt{refine}(\mathcal{T}_{m-1}, \mathcal{M}_{m-1})$; 
	\hfill\textbf{(closure step)}
 \\
    Define the set $\mathcal{M}_{m}$ of all~$T \in \mathcal{T}_{m}$ with $\ell(T) < \ell(T')- \alpha$ and $T \cap T' \neq \emptyset$ for some $T'\in \mathcal{T}_m$;  \hfill
    \textbf{(grading control)}
        }  
\Return{$\textup{\texttt{refine-LG}}(\mathcal{T},\mathcal{M})\coloneqq \mathcal{T}_{m}$}
\end{algorithm}

In the remainder of this section we show that the BiSecLG algorithm preserves \eqref{est:NVB-BDD} and that the output additionally satisfies \eqref{eq:LG}. 
Let $\tria$ be a regular triangulation resulting from the successive application of the \texttt{refine} routine to an initial triangulation $\tria_0$ that satisfies the matching neighbor condition. 
\begin{lemma}[Properties of BiSecLG$(\alpha)$]
\label{lem:PropNVB-LG}
For any set of marked simplices $\mathcal{M} \subset \mathcal{T}$ the algorithm BiSecLG$(\alpha)$ terminates. 
The resulting triangulation $\texttt{refine-LG}(\tria,\mathcal{M})$ is the smallest conforming refinement of $\tria$ in which all elements of $\mathcal{M}$ are bisected and \eqref{eq:LG} is satisfied. 
Furthermore, if $\mathcal{M}$ consists of a single simplex $T \in \tria$, then any newly generated simplex $T' \in \texttt{refine-LG}(\tria,T)\setminus \mathcal{T}$ satisfies that
\begin{align}\label{est:level-refine}
\ell(T') \leq \ell(T) + 1.
\end{align}
\end{lemma}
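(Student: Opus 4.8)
The plan is to analyze the BiSecLG$(\alpha)$ algorithm by tracking refinement levels through the iterative closure and grading-control steps. First I would establish termination: each pass through the \texttt{Repeat} loop applies \texttt{refine} to the marked set $\mathcal{M}_{m-1}$, which strictly increases the levels of those simplices, and then identifies a new marked set $\mathcal{M}_m$ of simplices that are ``too coarse'' relative to a neighbor. Since levels are bounded above by the maximal level appearing after the initial closure step (which is finite because \texttt{refine}$(\mathcal{T},\mathcal{M})$ produces a finite refinement), and each grading-control marking forces a bisection that raises a level, a standard monotonicity/finiteness argument shows the process must halt with $\mathcal{M}_m = \emptyset$. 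At that point no pair of touching simplices violates \eqref{eq:LG}, so the output satisfies the limited grading condition by construction.

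Next I would argue minimality: $\texttt{refine-LG}(\mathcal{T},\mathcal{M})$ is the \emph{smallest} conforming refinement bisecting all of $\mathcal{M}$ and satisfying \eqref{eq:LG}. Here I would use the fact (from Stevenson~\cite{Stevenson08}) that \texttt{refine}$(\mathcal{T},T)$ and its iterates produce the smallest conforming refinement performing the requested bisections, combined with the observation that any conforming refinement satisfying \eqref{eq:LG} and bisecting all of $\mathcal{M}$ must in particular bisect every simplex that our grading-control step marks — because that simplex has a neighbor whose level already exceeds it by more than $\alpha$, and that neighbor's level is forced in any admissible refinement. A short induction over the loop iterations $m$ then shows our $\mathcal{T}_m$ is dominated by any competing admissible refinement, hence ours is minimal.

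The key quantitative claim is \eqref{est:level-refine}: when $\mathcal{M} = \{T\}$ is a single simplex, every newly created simplex has level at most $\ell(T)+1$. The heart of this is a level-propagation estimate for Stevenson's \texttt{refine}$(\mathcal{T},T)$ routine — bisecting $T$ once (raising its level to $\ell(T)+1$) and then taking the conformal closure only creates simplices of level $\le \ell(T)+1$, \emph{provided} the ambient mesh $\mathcal{T}$ already had bounded level differences. This is where the $\alpha$-limited grading of the incoming mesh is used: before refining, $\mathcal{T}$ satisfies \eqref{eq:LG}, so all neighbors of $T$ have level $\ge \ell(T)-\alpha$, and the closure cascade triggered by a single bisection propagates through chains of neighbors, each step lowering the ``demand'' level by the structure of bisection in the Maubach--Traxler setting, so it never forces a simplex above level $\ell(T)+1$. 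I would likely invoke the known properties of the recurrence of bisection types / tagged simplices, or cite the analogous estimate in Stevenson or \cite{DemlowStevenson2011}, and then check that the subsequent grading-control markings in the loop (now with $\mathcal{M}_1$ possibly containing several simplices, all of level $< \ell(T')-\alpha \le \ell(T)+1-\alpha$ for some neighbor $T'$) only ever bisect simplices whose resulting level stays $\le \ell(T)+1$, closing the induction.

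The main obstacle I expect is precisely the last part: controlling the level of simplices produced during the conformal closure and during the iterated grading-control refinements, and showing the cascade terminates without ever exceeding $\ell(T)+1$. This requires a careful understanding of how bisection levels interact with the conforming-closure recursion in dimension $d \ge 2$ — in particular that the ``reflected neighbor'' structure guaranteed by the matching-neighbor condition keeps the refinement demand from amplifying. I would handle this by reducing to the per-element analysis of \texttt{refine}$(\mathcal{T},T)$ from \cite{Stevenson08} (whose recursion depth is bounded in terms of the level gaps already present), feeding in the incoming $\alpha$-limited grading as the hypothesis that bounds those gaps, and then arguing that the new markings $\mathcal{M}_m$ for $m \ge 1$ are ``downstream'' of the original bisection and hence cannot create anything finer than level $\ell(T)+1$. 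If a fully self-contained argument proves too delicate, the fallback is to cite the parallel result for the Euclidean-distance variant in \cite{DemlowStevenson2011} and note that the proof carries over verbatim with the distance replaced by $\delta$.
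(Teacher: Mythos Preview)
Your overall structure (termination, minimality, level bound) matches the paper, but you are missing the one clean tool that makes everything short: Stevenson's Theorem~5.1 already guarantees, for \emph{any} conforming refinement $\mathcal{T}$ of $\mathcal{T}_0$ and any marked set $\mathcal{M}$, that every newly created simplex in $\texttt{refine}(\mathcal{T},\mathcal{M})$ has level at most $\max_{T\in\mathcal{M}}\ell(T)+1$. No grading hypothesis on the ambient mesh is required. With this in hand, the level estimate~\eqref{est:level-refine} is immediate for the first closure step when $\mathcal{M}=\{T\}$, and then $\mathcal{M}_1$ consists only of simplices of level strictly below $\ell(T)+1-\alpha$, so a second application of Stevenson's bound keeps all newly created simplices at level $\le \ell(T)+1-\alpha+1\le \ell(T)+1$, and so on inductively. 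Termination follows by the same mechanism: the maximal level appearing in $\mathcal{M}_m$ drops by at least $\alpha$ at each pass, giving $M\le 1+\max_{T\in\mathcal{M}}\ell(T)/\alpha$. This is exactly the paper's argument.

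The gap in your plan is the sentence ``This is where the $\alpha$-limited grading of the incoming mesh is used: before refining, $\mathcal{T}$ satisfies~\eqref{eq:LG}.'' The lemma does \emph{not} assume the input $\mathcal{T}$ has $\alpha$-limited grading---it is only assumed to arise from successive applications of \texttt{refine}---so you cannot invoke that hypothesis, and your anticipated ``delicate analysis of level gaps during the closure cascade'' rests on an assumption you don't have. Fortunately none of it is needed: once you cite Stevenson's unconditional level bound, the entire proof is two lines, and the obstacle you flag simply does not arise.
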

\begin{proof}
Since the \texttt{refine} routine terminates \cite[Thm.\ 5.1]{Stevenson08}, it remains to show that the loop in the BiSecLG$(\alpha)$ algorithm terminates after finitely many steps $M\in \mathbb{N}$. 
Theorem~5.1 in \cite{Stevenson08} ensures that for all $m\in\mathbb{N}$ we have that
\begin{align*}
\ell(T') \leq \max_{T\in \mathcal{M}_{m-1}} \ell(T) + 1\qquad\text{for all } T' \in \mathcal{T}_m\setminus \mathcal{T}_{m-1}.
\end{align*}
This and \eqref{eq:LG} show that the levels of all simplices in~$\mathcal{M}_m$ are at least by the integer $\alpha$ smaller than the maximal level of simplices contained in~$\mathcal{M}_{m-1}$ in the preceding closure step. 
In particular, there exists a number $M\in \mathbb{N}$ with 
\begin{align*}
&M \leq 1 + \max_{T\in \mathcal{M}} \ell(T) / \alpha,
\end{align*}
such that $\mathcal{M}_M = \emptyset$, and hence the algorithm terminates. 
With the same arguments also estimate \eqref{est:level-refine} is proved. 
Since the \texttt{refine} routine leads to the smallest conforming triangulation, induction shows that $\texttt{refine-LG}(\tria,\mathcal{M})$ is the smallest conforming refinement of $\tria$ in which all simplices in $\mathcal{M}$ are bisected and \eqref{eq:LG} holds. 
\end{proof}
The following lemma states an estimate similar to the one in \cite[Thm.~5.2]{Stevenson08}. 
By $\dist$ we denote the Euclidean distance in $\mathbb{R}^d$.

\begin{lemma}[Distance]\label{lem:DistStev}
Any new simplex $T' \in \texttt{refine-LG}(\tria,T)\backslash \tria$ satisfies 
\begin{align*}
\dist(T,T') \lesssim  2^{-\ell(T')/d}.
\end{align*}
The hidden constant depends only the initial triangulation $\tria_0$, $d$ and $\alpha$. 
\end{lemma}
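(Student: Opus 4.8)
The plan is to peel off the outer loop of the BiSecLG$(\alpha)$ routine one grading-control round at a time, tracking simultaneously how fast the levels of the successive marked fronts decay and how far from $T$ these fronts reach, and then to sum the resulting geometric series. Write $\tria = \mathcal{T}_0 \subset \mathcal{T}_1 \subset \dots \subset \mathcal{T}_M = \texttt{refine-LG}(\tria,T)$ for the triangulations produced by the algorithm with $\mathcal{M}_0 \coloneqq \{T\}$, and $L_m \coloneqq \max_{\hat S \in \mathcal{M}_m} \ell(\hat S)$. A simplex $T' \in \texttt{refine-LG}(\tria,T)\setminus \tria$ is created in exactly one closure step, so $T' \in \mathcal{T}_m \setminus \mathcal{T}_{m-1} = \texttt{refine}(\mathcal{T}_{m-1},\mathcal{M}_{m-1})\setminus \mathcal{T}_{m-1}$ for some $m \in \{1,\dots,M\}$.

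First I would collect two ingredients. From Stevenson's analysis of the plain \texttt{refine} routine I use that bisecting a single simplex $\hat S$ and forming the conformal closure produces only simplices $T''$ with $\ell(T'') \le \ell(\hat S)+1$ and $\dist(\hat S,T'') \lesssim 2^{-\ell(T'')/d}$, with constants depending only on $\tria_0$ and $d$ (this is the single-simplex version of the asserted estimate, cf.\ \cite[Thm.~5.2]{Stevenson08}); a call $\texttt{refine}(\mathcal{T},\mathcal{M})$ with several marked simplices is treated by decomposing it into successive single-simplex refinements. From Lemma~\ref{lem:PropNVB-LG} and its proof I use the level decay of the marked fronts: since $\mathcal{M}_0 = \{T\}$,
\begin{align*}
  L_m \le \ell(T) - m\alpha \qquad \text{for } m = 0,1,\dots,M.
\end{align*}
Together with the level bound for \texttt{refine} this shows that every simplex created in the $m$-th closure step has level at most $L_{m-1}+1 \le \ell(T)-(m-1)\alpha+1$, hence, by shape regularity, diameter $\lesssim 2^{-(\ell(T)-(m-1)\alpha)/d}$ with constant depending only on $\tria_0$ and $d$.

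Next I would bound how far the fronts travel. Put $r_m \coloneqq \max\{\dist(T,\hat S)\colon \hat S \in \mathcal{M}_{m-1}\}$, so $r_1 = 0$. Applying the chained Stevenson distance estimate to $\mathcal{T}_m = \texttt{refine}(\mathcal{T}_{m-1},\mathcal{M}_{m-1})$ (and using that in an $\alpha$-graded mesh such a closure stays confined to a region whose diameter is controlled by that of the marked simplices) shows that every simplex newly created in step $m$ — in particular every element of $\mathcal{M}_m$ — lies within distance $\lesssim 2^{-(\ell(T)-(m-1)\alpha)/d}$ of the previous front $\mathcal{M}_{m-1}$, the constant now depending on $\tria_0$, $d$ and $\alpha$; hence $r_{m+1} \le r_m + c_1\,2^{-(\ell(T)-(m-1)\alpha)/d}$ with $c_1 = c_1(\tria_0,d,\alpha)$. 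Iterating and using that $\sum_{k=0}^{m-1} 2^{-(\ell(T)-k\alpha)/d}$ is geometric with ratio $2^{\alpha/d}>1$, and so comparable to its largest term, I obtain for $T'$ created in step $m$ that
\begin{align*}
  \dist(T,T') \le r_m + c_1\,2^{-(\ell(T)-(m-1)\alpha)/d} \lesssim 2^{-(\ell(T)-(m-1)\alpha)/d}.
\end{align*}
Since $\ell(T') \le L_{m-1}+1 \le \ell(T)-(m-1)\alpha+1$, we get $2^{-(\ell(T)-(m-1)\alpha)/d} \le 2^{1/d}\,2^{-\ell(T')/d}$, and the claim $\dist(T,T') \lesssim 2^{-\ell(T')/d}$ follows with constant depending only on $\tria_0$, $d$ and $\alpha$.

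The hard part will be the step in the third paragraph: proving, uniformly in the number of simplices marked in a round, that the newly created simplices (hence the new marked front $\mathcal{M}_m$) stay within distance $\lesssim 2^{-(\ell(T)-(m-1)\alpha)/d}$ of $\mathcal{M}_{m-1}$. This requires localizing Stevenson's single-simplex distance bound to a multi-simplex \texttt{refine} call and exploiting that, although the intermediate triangulations $\mathcal{T}_m$ may temporarily fail \eqref{eq:LG}, the grading violations are confined to the fronts $\mathcal{M}_m$, so the refinement activity does not escape a neighborhood of $T$ of the asserted size. Everything else is bookkeeping: the level drop $L_m \le L_{m-1}-\alpha$ from Lemma~\ref{lem:PropNVB-LG} furnishes the geometric ratio $2^{\alpha/d}>1$, which collapses the telescoped distance sum to a single term comparable to $2^{-\ell(T')/d}$.
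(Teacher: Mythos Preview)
Your front-tracking approach is genuinely different from the paper's, and the gap you flag as the ``hard part'' is real --- in fact it hides two concrete errors that the paper's argument avoids entirely.

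First, your level bound goes the wrong way for diameters: from $\ell(T'') \le L_{m-1}+1$ you infer that new simplices in round $m$ have diameter $\lesssim 2^{-(\ell(T)-(m-1)\alpha)/d}$, but $h_{T''} \eqsim 2^{-\ell(T'')/d}$ together with an \emph{upper} bound on $\ell(T'')$ yields only a \emph{lower} bound on $h_{T''}$. Second, the fronts $\mathcal{M}_m$ are not contained in the newly created simplices of round $m$: a simplex in $\mathcal{M}_m$ may be an \emph{old} element of $\tria$ that merely acquired a high-level neighbor, so your parenthetical ``in particular every element of $\mathcal{M}_m$'' does not follow. Both points undermine the recursive inequality $r_{m+1} \le r_m + c_1\,2^{-(\ell(T)-(m-1)\alpha)/d}$: the increment is governed by the size of the \emph{coarsest} simplices touched in a round, not by the maximum level $L_{m-1}$, and Stevenson's estimate $\dist(\hat S,T'') \lesssim 2^{-\ell(T'')/d}$ gives nothing uniform over the whole front in terms of $L_{m-1}$.

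The paper bypasses all of this by tracing a \emph{single} chain backward from $T'$ to $T$ rather than tracking the entire front forward. One produces simplices $T_1=T,\,T_1',\,T_2,\,T_2',\dots,T_N'=T'$ where each $T_j'$ is created by a single-simplex call $\texttt{refine}(\cdot,T_j)$ (so Stevenson's bound $\dist(T_j,T_j') \lesssim 2^{-\ell(T_j')/d}$ applies directly), each $T_{j+1}$ touches $T_j'$, and the grading-control rule forces $\ell(T_{j+1}) \le \ell(T_j') - \alpha - 1$. Along this chain the levels $\ell(T_j')$ satisfy $\ell(T_i') \ge \ell(T') + (N-i)\alpha$, so $\sum_j 2^{-\ell(T_j')/d}$ is a geometric series dominated by its final term $2^{-\ell(T')/d}$, and the triangle inequality $\dist(T,T') \le \sum_j \dist(T_j,T_j') + \sum_j (h_{T_j'} + h_{T_{j+1}})$ finishes the estimate. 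No multi-simplex localization of Stevenson's bound is needed, and no global control of the front radius is required --- only the specific path leading to the particular $T'$ in question.
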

\begin{proof}
First note that (4.1) in \cite{Stevenson08} states that 
\begin{align}\label{eq:h-equiv-NVB}
  2^{-\ell(T)} \eqsim h_T^d,
\end{align}
with constants depending only on the shape regularity of $\tria_0$. 
This property remains unchanged, since we only use the routine $\texttt{refine}$ by Stevenson repeatedly. 

Theorem~5.2 in~\cite{Stevenson08} states that 
any newly created~$T' \in \texttt{refine}(\mathcal{T},T) \setminus \mathcal{T}$ satisfies (with hidden constant only depending on the initial triangulation $\mathcal{T}_0$) that
\begin{align}
  \label{eq:51stevenson}
  \dist(T',T) &\lesssim 2^{-\ell(T')/d}.
\end{align}
Then by design of our algorithm we find simplices $T_1,\dots, T_N$ and $T_1', \dots, T_N'$ with $T_1 = T$ and $T_N'=T'$ such that $T_j'$ is created by a call of~$\texttt{refine}(\cdot,T_j)$, $T_j' \cap T_{j+1} \neq \emptyset$ and $\ell(T_{j+1}) \leq \ell(T_j') - \alpha-1$, for all $j = 1,\ldots,N-1 $. 
This and the fact that $\ell(T_j') \leq \ell(T_j)+1$ by Stevenson proves for any $1 \leq i \leq j \leq N$ one has that
\begin{align*}
  \ell(T_j') \leq \ell(T_j) + 1  \leq \ell(T_{j-1}')-d \leq \dots \leq
  \ell(T_i') -(j-i)\alpha.
\end{align*}
In particular this shows that 
\begin{align}\label{est:level}
\ell(T'_i) &\geq \ell(T'_N) + (N-i)\alpha. 
\end{align}
With~\eqref{eq:51stevenson} it follows that
\begin{align*}
\dist(T'_j, T_j) \lesssim 2^{-\ell(T'_j)/d} \quad \text{ for all } j = 1,\ldots, N. 
\end{align*}
Using this, \eqref{eq:h-equiv-NVB},  
the fact that $\ell(T_j') \leq \ell(T_j)+1$ and \eqref{est:level}, with hidden constants independent of $N$ we obtain that
\begin{align*}
  \dist(T_N',T_1)
  &\leq \sum_{j=1}^N \dist(T_j', T_j)
    + \sum_{j=1}^{N-1} (h_{T_j'} + h_{T_{j+1}}) 
  \\
  &\lesssim \sum_{j=1}^N 2^{-\ell(T'_j)/d} 
    + \sum_{j=1}^{N-1} 2^{-\ell(T'_j)/d} +  \sum_{j=2}^{N} 2^{-\ell(T_{j})/d}  
    \\
     &\lesssim
      \sum_{j=1}^N 2^{-\ell(T'_j)/d} 
    +  \sum_{j=2}^{N} 2^{-(\ell(T'_j)-1)/d}
  \\
   &\lesssim
      \sum_{j=1}^N 2^{-\ell(T'_j)/d} 
        \lesssim
      \sum_{j=1}^N 2^{-(\ell(T'_N)/d + (N-j)\alpha/d)} 
      \\
  &\lesssim 2^{-\ell(T'_N)/d} \sum_{j=1}^N 
  2^{-(N-j)\alpha/d} 
  \lesssim
   2^{-\ell(T'_N)/d} \sum_{i=0}^{N-1} 
  (2^{\alpha/d})^{-i} \\
  &\lesssim 2^{-\ell(T'_N)/d}.
\end{align*}
This proves the claim since $2^{\alpha/d}>1$. 
\end{proof}
\begin{theorem}[Closure estimate]
\label{thm:Control}
Let~$(\mathcal{T}_m)_{m\in \mathbb{N}}$ be a
sequence of refinements obtained by the successive application of the BiSecLG$(\alpha)$ algorithm for some initial triangulation $\mathcal{T}_0$ with the matching neighbor condition. 
Let~$\mathcal{M}_m \subset \mathcal{T}_m$ denote the sets of simplices marked for refinement, then
\begin{align}
  \label{eq:BDD-LG}
  \# \mathcal{T}_n - \# \mathcal{T}_0 \lesssim \sum_{m=0}^{n-1} \# \mathcal{M}_m\qquad\text{for all }n\in \mathbb{N}.
\end{align}
\end{theorem}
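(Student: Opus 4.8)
The statement is the BiSecLG$(\alpha)$-analogue of the closure estimate of Binev, Dahmen and DeVore~\cite{BinDahDeV04} (for $d=2$) and of Stevenson~\cite{Stevenson08} (for general $d$), and of the corresponding estimate obtained by Demlow and Stevenson~\cite{DemlowStevenson2011} for their distance-based variant. The plan is to follow Stevenson's proof; the only place where the modification of the algorithm enters is through the distance estimate of Lemma~\ref{lem:DistStev} and the level bound~\eqref{est:level-refine}, which take over the role of~\cite[Thm.~5.2]{Stevenson08} in the unmodified case.

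First I would reduce to refining a single simplex at a time. Since the conformal closure $\texttt{refine}(\mathcal{T},\mathcal{M})$ is obtained by successively applying $\texttt{refine}(\cdot,T)$ for $T\in\mathcal{M}$, and the grading-control step of BiSecLG$(\alpha)$ only marks additional simplices for further such one-simplex refinements, the whole sequence of refinements leading from $\mathcal{T}_0$ to $\mathcal{T}_n$ is realised as a composition of $J:=\sum_{m=0}^{n-1}\#\mathcal{M}_m$ operations $\texttt{refine-LG}(\cdot,\widehat T_j)$, $j=1,\dots,J$, where $\widehat T_1,\dots,\widehat T_J$ are the externally marked simplices in the order in which they are processed and each $\texttt{refine-LG}(\cdot,\widehat T_j)$ already carries the entire conformal-closure-plus-grading-control cascade triggered by $\widehat T_j$. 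Since a bisection changes the number of simplices by a fixed amount, it suffices to bound the total number of newly created simplices, summed over $j=1,\dots,J$, by a constant multiple of $J$.

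To this end I would charge every newly created simplex $T'$ to the marked simplex $\widehat T=\Lambda(T')$ of the call $\texttt{refine-LG}(\cdot,\widehat T)$ that created it. Lemma~\ref{lem:DistStev} gives $\dist(\widehat T,T')\lesssim 2^{-\ell(T')/d}$, estimate~\eqref{est:level-refine} gives $\ell(T')\le\ell(\widehat T)+1$, and \eqref{eq:h-equiv-NVB} together with the shape regularity of $\mathcal{T}_0$ shows that for a fixed $\widehat T$ and a fixed generation $g$ at most $\lesssim 1$ simplices of generation $g$ lie within Euclidean distance $c\,2^{-g/d}$ of $\widehat T$. These three facts already control the number of simplices of a given generation that are charged to $\widehat T$; the delicate point, and the main obstacle, is that naively summing this count over all admissible generations $g\le\ell(\widehat T)+1$ produces a bound that grows with $\ell(\widehat T)$ rather than being uniform. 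This is precisely the difficulty resolved by the Binev--Dahmen--DeVore / Stevenson labelling, in which one does not charge to the originally marked simplex but distributes the charge along the cascade so that, amortised over the whole sequence, each marked simplex receives only a bounded amount of charge; I would transcribe this labelling, the only adaptation being that the distance and generation bounds it requires are now supplied by Lemma~\ref{lem:DistStev} and~\eqref{est:level-refine}, and one has to check, using that BiSecLG$(\alpha)$ maintains~\eqref{eq:LG}, that the additional grading-control refinements do not create cascades outside the scope of this bookkeeping.

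Summing the resulting per-marked-simplex bounds over $j=1,\dots,J$ then yields $\#\mathcal{T}_n-\#\mathcal{T}_0\lesssim J=\sum_{m=0}^{n-1}\#\mathcal{M}_m$, with the implied constant depending only on $d$, $\alpha$ and the shape regularity of $\mathcal{T}_0$, as claimed.
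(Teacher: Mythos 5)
Your proposal is correct and takes essentially the same route as the paper: the paper simply invokes Stevenson's Theorem~6.1, observing that its only inputs are the analogues of (4.1), Theorem~5.1 and Theorem~5.2 of \cite{Stevenson08}, which are exactly \eqref{eq:h-equiv-NVB}, Lemma~\ref{lem:PropNVB-LG} and Lemma~\ref{lem:DistStev}. You spell out the Binev--Dahmen--DeVore charging argument that underlies that theorem rather than citing it as a black box, but the substance — reducing to single-simplex calls and feeding the new distance and level bounds into the unchanged amortised bookkeeping — is identical.
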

\begin{proof}
In~\cite{Stevenson08} Stevenson states that this property only relies on the estimate in~(4.1) and Theorems~5.1 and 5.2 therein. 
As argued in the proof of Lemma~\ref{lem:DistStev} the equivalence (4.1) in \cite{Stevenson08} is still satisfied, cf. \eqref{eq:h-equiv-NVB}. 
Lemma \ref{lem:PropNVB-LG} and \ref{lem:DistStev} correspond to Theorem~5.1 and 5.2 in \cite{Stevenson08}, respectively. 
Hence, Theorem~6.1 of~\cite{Stevenson08} and the remark thereafter result in \eqref{eq:BDD-LG}.
\end{proof}

\section{Crouzeix--Raviart Elements}
\label{sec:BY_CR}
In this section we investigate the stability of the $L^2$-projection onto the Crouzeix--Raviart finite element space. 
Let $\mathcal{F} = \mathcal{F}(\tria)$ denote the set of faces ($(d-1)$-simplices) of simplices in $\tria$ and let $\mathrm{mid}(f)$ denote the midpoint (center of mass) of $f\in \mathcal{F}$. 
For $d\geq 2$ the finite element space is defined by
\begin{align*}
  \CR \coloneqq \lbrace v_\textup{CR} \in \mathcal{L}_1^0(\mathcal{T}) \colon v_\textup{CR}
  \text{ is continuous in } \textup{mid}(f) \text{ for all }f\in \mathcal{F}\rbrace. 
\end{align*}
Let $\QCR\colon L^2(\Omega) \to \CR$ denote the $L^2$-projection, that is, 
\begin{align*}
  \langle \QCR u,v_\textup{CR}\rangle = \langle u,v_\textup{CR}\rangle\qquad\text{for all }u\in L^2(\Omega)\text{ and } v_\textup{CR}\in \CR.
\end{align*}
We derive a decay estimate for the $L^2$-projection $\QCR$ with ideas by \cite{BanYse14}. 
In particular, we design an operator $C= \CCR$ and a distance $\delta=\deltaCR$ with \ref{itm:self-adjoint}--\ref{itm:locality}.

For all $f\in \mathcal{F}$ let $\psi_f\in \CR$ denote the basis function with $\psi_f(\textup{mid}(f)) = 1$ and $\psi_f(\textup{mid}(g)) = 0$ for all faces $g\in \mathcal{F}\setminus \lbrace f \rbrace$.  
Let the (local) operator $C_f \colon L^2(\Omega)\to \CR$ be defined by
\begin{align}\label{eq:def_C_f}
  C_f u \coloneqq \frac{\langle u,\psi_f\rangle}{\langle
  \psi_f,\psi_f\rangle} \psi_f\qquad\text{for all }f\in
  \mathcal{F}\text{ and }u\in L^2(\Omega). 
\end{align}
This operator satisfies that
\begin{align*}
  \langle C_f u ,\psi_f\rangle = \langle u,
  \psi_f\rangle\qquad\text{for all }f\in\mathcal{F}\text{ and }u\in
  L^2(\Omega). 
\end{align*}
Further, let us consider the operator $\CCR \colon L^2(\Omega)\to \CR$ determined by 
\begin{align*}
  \CCR u\coloneqq \sum_{f\in \mathcal{F}} C_f u\qquad\text{for all }u\in L^2(\Omega).
\end{align*}
\begin{lemma}[Self-adjoint]
The operator $\CCR$ is self-adjoint, i.e.,~\ref{itm:self-adjoint} holds.
\end{lemma}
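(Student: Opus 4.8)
The plan is to show that $\CCR$ is linear and that its bilinear form is symmetric, exactly as was done for the operator $C$ in Theorem~\ref{thm:propCd}\ref{itm:verify-C1}. Linearity is immediate since each local operator $C_f$ defined in \eqref{eq:def_C_f} is linear in $u$ (it is $u$ paired against a fixed function $\psi_f$, rescaled, times $\psi_f$), and $\CCR$ is a finite sum of the $C_f$. So the only point to verify is self-adjointness with respect to $\skp{\cdot}{\cdot}$.

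First I would observe that each single local operator $C_f$ is self-adjoint: for $u,w \in L^2(\Omega)$,
\begin{align*}
  \skp{C_f u}{w} = \frac{\skp{u}{\psi_f}}{\skp{\psi_f}{\psi_f}} \skp{\psi_f}{w} = \frac{\skp{u}{\psi_f}\,\skp{w}{\psi_f}}{\skp{\psi_f}{\psi_f}} = \skp{u}{C_f w},
\end{align*}
which is manifestly symmetric in $u$ and $w$. Then I would sum over $f \in \mathcal{F}$: since the set of faces is finite and the inner product is bilinear, $\skp{\CCR u}{w} = \sum_{f\in\mathcal{F}} \skp{C_f u}{w} = \sum_{f\in\mathcal{F}} \skp{u}{C_f w} = \skp{u}{\CCR w}$. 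This gives self-adjointness, and hence property~\ref{itm:self-adjoint}.

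There is no real obstacle here — the statement is essentially a bookkeeping lemma analogous to Lemma~\ref{lem:two-sided-identity}'s flavour of argument but simpler, and the only thing one must be slightly careful about is that $\mathcal{F}$ is finite (so the interchange of sum and inner product is trivially justified) and that $\skp{\psi_f}{\psi_f} \neq 0$, which holds because $\psi_f$ is a nonzero piecewise polynomial. I would therefore write the proof in two short lines: one establishing self-adjointness of $C_f$ by the displayed symmetric formula above, and one summing over $f$ to conclude self-adjointness of $\CCR = \sum_{f\in\mathcal{F}} C_f$.
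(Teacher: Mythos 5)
Your proposal is correct and follows essentially the same route as the paper: both establish self-adjointness of each rank-one local operator $C_f$ and then sum over the finite set of faces; the paper verifies the symmetry of $\skp{C_f u}{w}$ by inserting $C_f w$ via the projection property, while you use the explicit formula for $C_f$, which amounts to the same computation.
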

\begin{proof}
  For all $u,w\in L^2(\Omega)$ one has that
  \begin{align*}
    &\langle \CCR u,w\rangle = \sum_{f\in \mathcal{F}} \langle C_f u,w\rangle = \sum_{f\in \mathcal{F}} \langle C_f u,C_fw\rangle = \sum_{f\in \mathcal{F}} \langle u,C_fw\rangle = \langle u,\CCR w\rangle.
  \end{align*}
\end{proof}
\begin{theorem}[Ellipticity]
  The operator $\CCR$ satisfies~\ref{itm:ellipticity} with condition number
  \begin{align*}
   \operatorname{cond}_2(\CCR|_{\CR}) = \frac{\lambda_\textup{max}(\CCR|_{\CR})}{\lambda_\textup{min}(\CCR|_{\CR})} \leq \frac{d^2}{d+2}\qquad\text{for all }d\geq 2.
  \end{align*}
\end{theorem}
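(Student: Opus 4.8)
The plan is to rewrite $\CCR|_{\CR}$ in the nodal basis of $\CR$ as a Jacobi‑preconditioned mass matrix and then to reduce the bound on its condition number to a single reference‑simplex computation. The starting point is the observation that $C_f$ is the $\langle\cdot,\cdot\rangle$‑orthogonal projection onto $\operatorname{span}(\psi_f)$ and that $(\psi_f)_{f\in\mathcal{F}}$ is a basis of $\CR$. Since $\CCR\psi_g=\sum_{f\in\mathcal{F}}\langle\psi_g,\psi_f\rangle\,\langle\psi_f,\psi_f\rangle^{-1}\psi_f$, the matrix of $\CCR|_{\CR}$ in this basis is $D^{-1}M$, where $M=(\langle\psi_f,\psi_g\rangle)_{f,g\in\mathcal{F}}$ is the Crouzeix--Raviart mass matrix and $D$ its diagonal. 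Hence the spectrum of $\CCR|_{\CR}$ agrees with that of the symmetric positive definite matrix $D^{-1/2}MD^{-1/2}$, so $\operatorname{cond}_2(\CCR|_{\CR})=\operatorname{cond}_2(D^{-1/2}MD^{-1/2})$.

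Next I would localise. Splitting $M=\sum_{T\in\mathcal{T}}M^T$ and $D=\sum_{T\in\mathcal{T}}D^T$, where $M^T$ carries the local entries $\langle\psi_f,\psi_g\rangle_T$ over the $d+1$ faces of $T$ (and zeros elsewhere) and $D^T$ is its diagonal, an affine change of variables shows $M^T=|T|\,\widehat M$ and $D^T=|T|\,\widehat D$ with reference matrices $\widehat M,\widehat D\in\mathbb{R}^{(d+1)\times(d+1)}$ independent of $T$. If $\mu_{\min},\mu_{\max}$ denote the extreme eigenvalues of $\widehat D^{-1}\widehat M$, then $\mu_{\min}\widehat D\le\widehat M\le\mu_{\max}\widehat D$ in the sense of quadratic forms, hence $\mu_{\min}D^T\le M^T\le\mu_{\max}D^T$ for each $T$, and summing over $\mathcal{T}$ gives $\mu_{\min}D\le M\le\mu_{\max}D$. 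Consequently $\operatorname{cond}_2(\CCR|_{\CR})\le\mu_{\max}/\mu_{\min}$, and $\mu_{\min}>0$ also yields the ellipticity in \ref{itm:ellipticity}.

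Finally I would compute $\widehat M$ and $\widehat D$ explicitly. On a simplex with barycentric coordinates $\lambda_0,\dots,\lambda_d$ the Crouzeix--Raviart function belonging to the face opposite the $j$‑th vertex equals $1-d\lambda_j$, so a direct computation using the monomial integration formula \eqref{eq:integrationMonomials} gives $\widehat M=\frac{1}{(d+1)(d+2)}\bigl(d^2 I+(2-d)J\bigr)$ with $I$ the identity and $J$ the all‑ones matrix, and $\widehat D=\frac{d^2-d+2}{(d+1)(d+2)}I$. Thus $\widehat D^{-1}\widehat M=\frac{1}{d^2-d+2}\bigl(d^2 I+(2-d)J\bigr)$ has eigenvalues $\frac{d^2}{d^2-d+2}$ (on $\ker J$, multiplicity $d$) and $\frac{d+2}{d^2-d+2}$ (on $\operatorname{span}(1,\dots,1)$); since $d^2\ge d+2$ for $d\ge2$ one gets $\mu_{\max}=\frac{d^2}{d^2-d+2}>0$ and $\mu_{\min}=\frac{d+2}{d^2-d+2}>0$, whence $\mu_{\max}/\mu_{\min}=\frac{d^2}{d+2}$, which is the claimed bound.

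The step I expect to be most delicate is the localisation: one must be careful that a face lies in one or two simplices while the restriction $\psi_f|_T$ is intrinsic to $T$, so that the splitting $M=\sum_T M^T$ is set up correctly and the reference matrices are genuinely $T$‑independent — only then can the local quadratic‑form inequalities simply be summed. The remaining computations are routine once the $\alpha I+\beta J$ structure of $\widehat M$ is noticed, and the inequality $d^2\ge d+2$ for $d\ge2$ is exactly what makes the bound come out as $d^2/(d+2)$ (equal to $1$ when $d=2$, consistent with the $L^2$‑orthogonality of the Crouzeix--Raviart basis functions on a triangle).
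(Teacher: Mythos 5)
Your proposal is correct, and it reaches the paper's bound by a linear--algebraic reformulation of essentially the same computation. The paper argues functionally: it establishes the two local norm equivalences $\sum_j\norm{v_j\psi_j}_{2,T}^2\le K_1\norm{v}_{2,T}^2$ and $\norm{v}_{2,T}^2\le K_2\sum_j\norm{v_j\psi_j}_{2,T}^2$ (with $K_1=\frac{d^2-d+2}{d+2}$, $K_2=\frac{d^2}{d^2-d+2}$, obtained from exactly your reference matrix $\widehat D^{-1}\widehat M=N^{-1}M$), and then converts them into $\lambda_{\min}(\CCR|_{\CR})\ge 1/K_1$ and $\lambda_{\max}(\CCR|_{\CR})\le K_2$ by two global Cauchy--Schwarz arguments in the style of additive subspace correction, giving $\operatorname{cond}_2\le K_1K_2=\frac{d^2}{d+2}$. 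You instead identify $\CCR|_{\CR}$ exactly with the Jacobi--preconditioned mass matrix $D^{-1}M$ and pass from local to global by summing element-wise quadratic-form inequalities $\mu_{\min}D^T\le M^T\le\mu_{\max}D^T$. Both routes hinge on the identical reference-simplex eigenvalue computation and produce the same individual bounds ($1/K_1=\mu_{\min}=\frac{d+2}{d^2-d+2}$, $K_2=\mu_{\max}=\frac{d^2}{d^2-d+2}$), so neither is sharper; what your version buys is a cleaner conceptual statement (the condition number of $\CCR|_{\CR}$ \emph{is} that of $D^{-1/2}MD^{-1/2}$, bounded by the worst local condition number), at the cost of having to set up the splitting $M=\sum_T M^T$, $D=\sum_T D^T$ carefully --- which you do correctly, since $\psi_f|_T$ is intrinsic to $T$ and the diagonal entries $\skp{\psi_f}{\psi_f}$ decompose additively over the one or two simplices containing $f$. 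Your closing consistency check ($d=2$ gives condition number $1$ because $2-d=0$ makes $\widehat M$ diagonal) is also correct.
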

\begin{proof}\textit{Step 1 (Values $K_1$ and $K_2$).}
For a simplex $T\in \mathcal{T}$ let $x_0,\dots,x_d$ be its vertices and denote by $f_j$ the face opposite of vertex $x_j$ for all $j=0,\dots,d$. 
By $\psi_j \coloneqq \psi_{f_j}$ for all $j=0,\dots,d$ we denote the corresponding basis functions. 
We want to compute the constants $K_1,K_2$ such that, for all $v = \sum_{j=0}^d v_j \psi_j$ with $v_j\in \mathbb{R}$, 
  \begin{align}\label{eq:AimProofCR1}
    \sum_{j=0}^d \lVert v_j \psi_j\rVert_{2,T}^2 \leq K_1 \lVert v \rVert_{2,T}^2\quad \text{and}\quad \lVert v \rVert_{2,T}^2 \leq K_2 \sum_{j=0}^d \lVert v_j \psi_j\rVert_{2,T}^2. 
  \end{align} 
  With $\lambda_j$ denoting the barycentric coordinate corresponding to $x_j$, for $j = 0,\dots,d$, the basis function $\psi_j$ reads 
\begin{align*}
  \psi_j|_T = 1 - d\lambda_j\qquad\text{for all }j=0,\dots,d.
\end{align*}
Hence, the entries of the local mass matrix $M = (M_{j\ell})_{j,\ell=0}^d$ are given by 
\begin{align}\label{eq:LocMcr}
  \begin{aligned}
    M_{j\ell} &= \int_T \psi_{j}\psi_{\ell} \dx = \int_T
    (1-d\lambda_j) (1-d\lambda_\ell)\dx
    \\
    & = |T|\left(1 - 2d \frac{1}{d+1} + d^2
      \frac{(1+\delta_{j\ell})}{(d+2)(d+1)}\right)
    = |T|\, \frac{2-d + \delta_{j\ell}d^2}{(d+2)(d+1)}.
  \end{aligned}
\end{align}
Let $\identity\in \mathbb{R}^{(d+1)\times (d+1)}$ denote the identity matrix and set 
\begin{align*}
 N \coloneqq |T|\, \frac{2-d + d^2}{(d+2)(d+1)} \,\identity.
\end{align*}
The inequalities in \eqref{eq:AimProofCR1} are equivalent to 
  \begin{align*}
    \underline{v}^\top \underline{v} \leq K_1 \underline{v}^\top N^{-1} M \underline{v} \qquad\text{and}\qquad \underline{v}^\top N^{-1} M \underline{v}\leq K_2 \underline{v}^\top \underline{v}\qquad\text{for all }\underline{v} \in \mathbb{R}^{d+1}.
  \end{align*}
  This shows that the optimal values for $K_1$ and $K_2$ in \eqref{eq:AimProofCR1} are given by the inverse of the smallest eigenvalue and the largest eigenvalues of the matrix $N^{-1}M$, respectively.
  Let $\mathbf{1} \in \mathbb{R}^{(d+1)\times (d+1)}$ denote the matrix with all entries equal to one. Then one can compute that 
  \begin{align*}
  N^{-1} M = \frac{2-d}{2-d + d^2}\, \mathbf{1} + \frac{d^2}{2-d + d^2}\, \identity.
  \end{align*}  
   Since the spectrum of $\mathbf{1}$ and the one of $\identity$ equal $\lbrace 0 ,d+1\rbrace$ and $\lbrace 1\rbrace$, respectively, the spectrum of $N^{-1}M$ reads
  \begin{align*}
  \left\{  \frac{d^2}{2-d + d^2},   \frac{d^2 + (2-d)(d+1)}{2-d + d^2} \right\} = \left\{\frac{d^2}{2-d + d^2}, \frac{2+d}{2-d + d^2}\right\}. 
  \end{align*}
This leads to the constants 
  \begin{align*}
    K_1 = \frac{2-d + d^2}{2+d}
    \qquad\text{and}\qquad K_2 = \frac{d^2}{2-d + d^2}.
  \end{align*}
  \textit{Step 2 (Eigenvalue bounds for $\CCR$).}
  Let $v = \sum_{f\in \mathcal{F}} v_f \psi_f \in \CR$, then 
  \begin{align*}
    \norm{v}_2^2 = \sum_{f\in \mathcal{F}} \skp{v_f \psi_f}{v} = \sum_{f\in \mathcal{F}} \skp{v_f \psi_f}{C_f v} \leq \Big(\sum_{f\in \mathcal{F}} \lVert v_f \psi_f \rVert^2_2 \Big)^{1/2}\Big(\sum_{f\in \mathcal{F}} \lVert C_f v \rVert^2_2 \Big)^{1/2}.
  \end{align*}
  The application of the estimate in \eqref{eq:AimProofCR1} yields 
  \begin{align*}
    \norm{v}_2^2
    &\leq K_1 \sum_{f\in \mathcal{F}} \lVert C_f v \rVert^2_2 
      = K_1 \sum_{f\in \mathcal{F}} \skp{C_f v}{C_f v}               
      = K_1 \sum_{f\in \mathcal{F}} \skp{C_f v}{v}  = K_1 \skp{\CCR v}{v}.
  \end{align*}
  This proves that $1/ K_1 \leq \lambda_{\min}(\CCR|_{\CR})$.
  On the other hand, \eqref{eq:AimProofCR1} implies that
  \begin{align*}
    \norm{\CCR v}_2^2
    &= \lVert  \sum_{f\in \mathcal{F}} C_f v\rVert_2^2  \leq K_2
      \sum_{f\in\mathcal{F}} \norm{C_f v}_2^2
      = K_2 \sum_{f\in\mathcal{F}} \skp{C_f v}{C_f v}
    \\
    &
      = K_2 \sum_{f\in\mathcal{F}} \skp{C_f v}{v}  = K_2 \skp{\CCR v}{v} \leq K_2 \norm{\CCR v}_2 \norm{v}_2.
  \end{align*}
  This shows that $\lambda_{\max}(\CCR|_{\CR}) \leq K_2$. Finally, the claim follows from
\begin{align*}
&   \operatorname{cond}_2(\CCR|_{\CR}) = \frac{\lambda_\textup{max}(\CCR|_{\CR})}{\lambda_\textup{min}(\CCR|_{\CR})} \leq K_1K_2 = \frac{d^2}{d+2}.
\end{align*}
\end{proof}
We use a face based notion of neighbors such that the properties~\ref{itm:distance} and~\ref{itm:locality} hold by design of $\CCR$. 
\begin{definition}[Distance $\deltaCR$]
\label{def:MetricCR}
We call $T, T' \in \tria$ with $T \neq T'$ (face) neighbors, if they share a face $f \in \mathcal{F}$. 
Then the induced (geodesic) distance function $\deltaCR$ as in \ref{itm:distance} satisfies that $\deltaCR(T,T') = 1$ if and only if $T \cap T' \in \mathcal{F}$. 
\end{definition}

With this distance (which also enters Definition~\ref{def:grading-cont} of the grading) and with $\kappa \coloneqq \operatorname{cond}_2(\CCR|_{\CR})$, the combination of Proposition~\ref{prop:decay} with \eqref{eq:ConvSpeed2} and Theorem~\ref{thm:weighted-L2} shows the weighted $L^2$-stability for all weights $\rho$ with grading
\begin{align}\label{eq:MaxGradCR} 
\gamma_\rho < \frac{1}{q} =
  \frac{\sqrt{\kappa} + 1}{\sqrt{\kappa} - 1} = \frac{d + \sqrt{d+2}}{d
  - \sqrt{d+2}}.
\end{align} 
The distance~$\deltaCR$ as in Definition~\ref{def:MetricCR}
is larger than the distance~$\delta$ from Definition~\ref{def:Metric} since the corresponding notion of neighbors is stronger. 
This allows for smaller gradings in Definition~\ref{def:grading-cont}. 
In particular, we can define the following mesh size function $h\in \mathcal{L}_0^0(\mathcal{T})$. 
Let $\mathcal{T}$ be a triangulation resulting from successive application of the BiSec-MT algorithm to some fixed initial triangulation $\mathcal{T}_0$ of $\Omega \subset \mathbb{R}^d$, see Section \ref{subsec:RefStrat} for further details on BiSec-MT. 
As above we assume that the initial triangulation $\mathcal{T}_0$ satisfies the matching neighbor condition of Stevenson \cite{Stevenson08}. 
Then any simplex $T\in \mathcal{T}$ results from $\ell\in
\mathbb{N}$ recursive bisections of some simplex $T_0\in \mathcal{T}_0$. 
The number $\ell = \ell(T)$ is called level of $T\in
\mathcal{T}$. 
We define the mesh size function $h\in \mathcal{L}_0^0(\mathcal{T})$ with
\begin{align}\label{eq:meshSizeCR}
  h|_T \coloneqq 2^{-\ell(T)/d}\eqsim h_T \coloneqq \textup{diam}(T) \qquad\text{for all
  }T\in \mathcal{T}.
\end{align} 
\begin{lemma}[Grading for BiSec-MT]
The mesh size function $h$ as in \eqref{eq:meshSizeCR} (with respect to the distance~$\deltaCR$) has grading
\begin{align}\label{eq:GradingHcr}
\gamma_h = 2^{1/d}.
\end{align} 
\end{lemma}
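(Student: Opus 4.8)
The plan is to reduce the claim to a purely combinatorial statement about the levels of face-adjacent simplices and then invoke the structure theory of BiSec-MT. First, recall from~\eqref{eq:meshSizeCR} and~\eqref{eq:h-equiv-NVB} (i.e.\ estimate~(4.1) in~\cite{Stevenson08}) that $2^{-\ell(T)}\eqsim h_T^d$ with constants depending only on~$\tria_0$, so that $h|_T = 2^{-\ell(T)/d}\eqsim \textup{diam}(T)$ uniformly in $T\in\tria$; hence $h$ is a mesh size function in the sense of Definition~\ref{def:MeshSizeFunction}. Since $h$ is constant on each simplex, condition~\eqref{eq:graded-weight1} holds trivially with any $\gamma_h\geq 1$, so it only remains to check~\eqref{eq:graded-weight2} for $\delta=\deltaCR$ with $\gamma_h = 2^{1/d}$; as $\esssup_{T'}h = \essinf_{T'}h = h|_{T'} = 2^{-\ell(T')/d}$, this is equivalent to the level estimate
\begin{align}\label{eq:level-face-nbrs}
  \abs{\ell(T)-\ell(T')}\leq 1 \qquad\text{whenever } T\cap T'\in\mathcal{F}.
\end{align}

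The core of the proof is~\eqref{eq:level-face-nbrs}, which I would establish by tracking the common face $f\coloneqq T\cap T'\in\mathcal{F}$ through the bisection history. Each bisection of a $d$-simplex creates exactly one new $(d-1)$-face (its cutting face), inherits two of the old $(d-1)$-faces whole, and subdivides the remaining ones; consequently, going back along the ancestor chains of~$T$ and of~$T'$, the face~$f$ is inherited whole down to a stage at which it is first created, so that there is a simplex~$R$ in the refinement forest, of some level~$\ell_R$, with $T$ and $T'$ descendants of the two simplices carrying~$f$ as a face at that stage (if $f$ is a face of $\tria_0$ one argues similarly with $\ell_R = 0$). The matching neighbour condition on~$\tria_0$ together with the compatible-patch structure of BiSec-MT (\cite{Maubach95,Traxler97,Stevenson08}) guarantees that any bisection whose cutting edge lies in a shared $(d-1)$-face must bisect the two simplices adjacent to that face simultaneously; hence, as long as $f$ survives as a \emph{full} face of both~$T$ and~$T'$, the refinement chains from $R$ to $T$ and from $R$ to $T'$ stay synchronised, and $\ell(T)-\ell_R$ and $\ell(T')-\ell_R$ can differ by at most one. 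This gives~\eqref{eq:level-face-nbrs}, and then for $\deltaCR(T,T')=1$ we obtain $h|_{T'} = 2^{-\ell(T')/d}\leq 2^{(1-\ell(T))/d} = 2^{1/d}\,h|_T$, and by symmetry also $h|_T\leq 2^{1/d}\,h|_{T'}$, which is precisely~\eqref{eq:graded-weight} with $\gamma_h = 2^{1/d}$.

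The main obstacle is exactly~\eqref{eq:level-face-nbrs}: this is the point where one genuinely needs the fine structure of BiSec-MT rather than mere shape-regularity. Indeed, shape-regularity alone yields only $h_T\eqsim h_{T'}$ across a shared face (all $(d-1)$-faces of a shape-regular $d$-simplex being comparable to its diameter), hence a bound on $\abs{\ell(T)-\ell(T')}$ by a shape-regularity constant and thus a grading of the form $2^{c/d}$ for some $c$, rather than the sharp value $2^{1/d}$. Accordingly I would carry out the synchronisation step by appealing to the compatible-bisection / matching-neighbour machinery of~\cite{Maubach95,Traxler97,Stevenson08}, which is designed precisely to keep the levels of face-adjacent simplices within one, rather than reprove it here.
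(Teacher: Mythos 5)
Your proof is correct and follows essentially the same route as the paper: both reduce the claim to the combinatorial fact that face-adjacent simplices in a BiSec-MT triangulation have levels differing by at most one, which the paper obtains directly from the proof of \cite[Cor.~4.6]{Stevenson08} and which you sketch before deferring to the same machinery. Your additional verification that $h$ is a mesh size function and that only~\eqref{eq:graded-weight2} needs checking is a harmless elaboration of what the paper leaves implicit.
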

\begin{proof}
The proof of \cite[Cor.\ 4.6]{Stevenson08} shows that the level of simplices $T,T' \in \mathcal{T}$ with $\deltaCR(T,T') = 1$ differs at most by one. 
Hence, the lemma follows from the definition of $h$. 
\end{proof}
\begin{corollary}[Weighted $L^p$-stability with BiSec-MT]\label{cor:LpStabCR}
Let $d\geq 2$, let $p \in [1, \infty]$ and let $\rho \in L^1(\Omega)$ be a weight with grading $\gamma_{\rho}$ such that 
\begin{align*} 
\gamma_\rho 2^{\left| \frac{1}{2} - \frac{1}{p}
  \right|} < \frac{1}{q} = \frac{d + \sqrt{d+2}}{d - \sqrt{d+2}}. 
\end{align*}
Then, for any mesh resulting from the successive application of BiSec-MT to an initial triangulation $\mathcal{T}_0$ as above one has that
\begin{align} \lVert \rho\,\QCR u \rVert_p \lesssim \lVert \rho u
  \rVert_p\qquad\text{for all }u \in L^p(\Omega). 
\end{align} 
The hidden constant solely depends on the initial triangulation $\tria_0$ and $\gamma_{\rho}$. 
In particular, $L^p$-stability holds for all $p\in [1,\infty]$ and all dimensions $d\leq 35$. 
\end{corollary}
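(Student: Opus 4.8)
The statement to prove is Corollary~\ref{cor:LpStabCR}: weighted $L^p$-stability of $\QCR$ for meshes produced by BiSec-MT, under the grading condition $\gamma_\rho 2^{|1/2-1/p|} < 1/q$ with $q = (d-\sqrt{d+2})/(d+\sqrt{d+2})$, together with the claim that unweighted $L^p$-stability holds for all $p$ whenever $d\le 35$. The key point is that all the heavy lifting has already been done: the operator $\CCR$ and distance $\deltaCR$ satisfy \ref{itm:self-adjoint}--\ref{itm:locality} (shown in the preceding lemmas and theorems of Section~\ref{sec:BY_CR}), so the abstract machinery of Sections~\ref{sec:DecayWithC} and~\ref{sec:WeightedEst} applies verbatim with $Q$ replaced by $\QCR$, $\mathcal{L}^1_K(\tria)$ replaced by $\CR$, and $\delta$ replaced by $\deltaCR$.

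First I would invoke Proposition~\ref{prop:decay} together with \eqref{eq:ConvSpeed2} and the ellipticity bound $\operatorname{cond}_2(\CCR|_{\CR}) \le d^2/(d+2)$ to get the decay estimate for $\QCR$ with decay parameter $q = (\sqrt\kappa-1)/(\sqrt\kappa+1)$, and compute explicitly that $\sqrt\kappa \le d/\sqrt{d+2}$ gives $1/q = (d+\sqrt{d+2})/(d-\sqrt{d+2})$ as in \eqref{eq:MaxGradCR}. Then Theorem~\ref{thm:weighted-L2} yields weighted $L^2$-stability for all weights with grading $\gamma_\rho < 1/q$, hence $\gamma_{\max} \ge 1/q$ in the sense of Definition~\ref{def:WorstGrading} (with $Q$ replaced by $\QCR$). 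Next, the mesh size function $h$ from \eqref{eq:meshSizeCR} has grading $\gamma_h = 2^{1/d}$ with respect to $\deltaCR$ by the preceding lemma, so $\gamma_h^d = 2$. Feeding $\gamma_h^d = 2$ into Theorem~\ref{thm:weighted-Lp} (which only uses the weighted $L^2$-estimate and the distance structure, and whose proof is dimension-agnostic) gives weighted $L^p$-stability under $\gamma_\rho \gamma_h^{d|1/2-1/p|} = \gamma_\rho 2^{|1/2-1/p|} < \gamma_{\max}$, and since $\gamma_{\max} \ge 1/q$ the hypothesis of the corollary suffices; tracking constants through Theorem~\ref{thm:weighted-Lp} shows the hidden constant depends only on $\tria_0$ (which controls shape-regularity and the equivalence constants in \eqref{eq:equiConstMeshFct}) and on $\gamma_\rho$.

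For the final sentence, the unweighted case is $\rho \equiv 1$, i.e.\ $\gamma_\rho = 1$, and the worst requirement over $p \in [1,\infty]$ is at the endpoints $p \in \{1,\infty\}$ where $|1/2 - 1/p| = 1/2$, giving the condition $2^{1/2} < 1/q = (d+\sqrt{d+2})/(d-\sqrt{d+2})$. This is a single inequality in $d$; solving it (writing $t = \sqrt{d+2}$, the condition becomes $\sqrt2(d-t) < d+t$, i.e.\ $(\sqrt2-1)d < (\sqrt2+1)t$, i.e.\ $(\sqrt2-1)^2 d^2 < (\sqrt2+1)^2(d+2)$) shows it holds precisely for $d$ up to some threshold, and numerically this threshold is $35$; I would simply state the resulting bound $d \le 35$ (the paper's abstract mentions $d\le 32$ for the $W^{1,p}$ version, so the $L^p$-only threshold being a bit larger is consistent).

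The only genuine obstacle is bookkeeping rather than mathematics: one must check that every step of the abstract theory — the two-sided identity (Lemma~\ref{lem:two-sided-identity}), the decay estimate, Theorem~\ref{thm:weighted-L2}, and Theorem~\ref{thm:weighted-Lp} — used only properties \ref{itm:self-adjoint}--\ref{itm:locality} of $C$ and $\delta$ and the defining property of the $L^2$-projection, and nothing specific to Lagrange elements (in particular, $\QCR$ is still the identity on $\CR$, which is all that is needed so that $\CCR = \QCR\CCR = \CCR\QCR$). Since Section~\ref{sec:WeightedEst} was explicitly written to be independent of the design of $C$, this verification is routine, and the proof reduces to assembling the cited results and performing the two elementary numerical computations above.
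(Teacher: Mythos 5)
Your proposal is correct and follows exactly the paper's route: the paper's proof is a one-line citation of Theorem~\ref{thm:weighted-Lp} together with \eqref{eq:MaxGradCR} (i.e.\ $\gamma_{\max}\ge 1/q=(d+\sqrt{d+2})/(d-\sqrt{d+2})$ from the ellipticity bound $\kappa\le d^2/(d+2)$) and \eqref{eq:GradingHcr} ($\gamma_h=2^{1/d}$, so $\gamma_h^{d|1/2-1/p|}=2^{|1/2-1/p|}$), which is precisely the assembly you describe. Your endpoint computation confirming $2^{1/2}<1/q$ exactly for $d\le 35$ is also correct.
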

\begin{proof}
The proof follows by application of Theorem~\ref{thm:weighted-Lp} and \eqref{eq:MaxGradCR}--\eqref{eq:GradingHcr}. 
\end{proof}
Let $\nablaNC$ denote the element-wise application of the
gradient. 
\begin{corollary}[Weighted Sobolev stability with BiSec-MT]\label{cor:W1pStabCR}
Let $d \geq 2$, let $p\in [1,\infty]$ and let $\rho \in L^1(\Omega)$ be a weight with grading $\gamma_{\rho}$ such that
\begin{align*} \gamma_\rho 2^{\frac{1}{d} + \left| \frac{1}{2} -
  \frac{1}{p} \right|} < \frac{1}{q} = \frac{d + \sqrt{d+2}}{d -
  \sqrt{d+2}}. 
\end{align*}
Then, for any mesh resulting from the successive application of BiSec-MT to an initial triangulation $\mathcal{T}_0$ as above one has that
\begin{align}
\lVert \rho\nablaNC \QCR u \rVert_p \lesssim \lVert \rho
  \nabla u \rVert_p\qquad\text{for all }u \in W^{1,p}(\Omega).
\end{align} 
The hidden constant solely depends on the initial triangulation $\tria_0$ and $\gamma_{\rho}$. 
In particular, for all dimensions $d\geq 2$ one has the $W^{1,2}$-stability
\begin{align} \lVert \nablaNC \QCR u \rVert_2 \lesssim \lVert \nabla u
  \rVert_2 \qquad\text{for all }u\in W^{1,2}(\Omega),
\end{align} 
and $W^{1,p}$-stability is available for all $p\in [1,\infty]$ and all dimensions $d\leq 32$. 
\end{corollary}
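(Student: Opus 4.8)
The plan is to transfer the argument of Theorem~\ref{thm:weighted-Sobolev} to the nonconforming setting, with the Scott--Zhang operator replaced by the Crouzeix--Raviart interpolation operator $\INC\colon W^{1,1}(\Omega)\to\CR$, defined face-wise by $(\INC v)(\mathrm{mid}(f))\coloneqq\dashint_f v\,\mathrm{d}s$ for all $f\in\mathcal{F}$. The crucial structural point is that $\INC$ is \emph{element-local}: since the value of $\INC v$ at the midpoint of a face of $T$ only depends on the trace of $v$ on that face, $(\INC v)|_T$ depends only on $v|_T$, and $\INC$ reproduces affine functions on each simplex. Hence the Bramble--Hilbert lemma together with a scaling argument on the reference simplex yield, for all $p\in[1,\infty]$, all $T\in\mathcal{T}$ and all $v\in W^{1,p}(\Omega)$,
\begin{align*}
  \norm{h^{-1}(v-\INC v)}_{p,T}&\lesssim\norm{\nabla v}_{p,T},\\
  \norm{\nablaNC\INC v}_{p,T}&\lesssim\norm{\nabla v}_{p,T},
\end{align*}
with constants depending only on $d$, $p$ and the shape regularity of $\mathcal{T}_0$ (which is preserved under BiSec-MT). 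Together with the inverse estimate $\norm{\nablaNC w}_{p,T}\lesssim h_T^{-1}\norm{w}_{p,T}$ for $w\in\CR$ and Remark~\ref{rem:weight}\ref{itm:weight-local-est}, the corresponding weighted local and global estimates are at our disposal as well, with constants depending additionally on $\gamma_\rho$.

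The main chain of inequalities is then as follows. Because $\QCR$ is the identity on $\CR$ and $\INC u\in\CR$, we have $\QCR\INC=\INC$, so that $\QCR u=\QCR(u-\INC u)+\INC u$. Applying the weighted broken inverse estimate to the first term, the weighted gradient stability of $\INC$ to the second, and the local equivalence $h\eqsim h_T$ from \eqref{eq:meshSizeCR}, we obtain
\begin{align*}
  \norm{\rho\,\nablaNC\QCR u}_p
  &\lesssim\norm{\rho\,h^{-1}\,\QCR(u-\INC u)}_p+\norm{\rho\,\nabla u}_p.
\end{align*}
By Remark~\ref{rem:weight}\ref{itm:weight-inverse}--\ref{itm:weight-product} and the grading \eqref{eq:GradingHcr}, the weight $\rho\,h^{-1}$ has grading $\gamma_\rho\gamma_h=\gamma_\rho 2^{1/d}$, and the hypothesis $\gamma_\rho 2^{1/d+\abs{\frac12-\frac1p}}<1/q$ is exactly the condition $(\gamma_\rho 2^{1/d})\,2^{\abs{\frac12-\frac1p}}<1/q$ under which Corollary~\ref{cor:LpStabCR} yields the weighted $L^p$-stability of $\QCR$ for the weight $\rho\,h^{-1}$. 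Hence $\norm{\rho\,h^{-1}\,\QCR(u-\INC u)}_p\lesssim\norm{\rho\,h^{-1}(u-\INC u)}_p$, and the weighted local approximation estimate for $\INC$ (again using $h\eqsim h_T$) gives $\norm{\rho\,h^{-1}(u-\INC u)}_p\lesssim\norm{\rho\,\nabla u}_p$. Chaining these estimates proves the stability inequality, and tracking the constants through Corollary~\ref{cor:LpStabCR} and the interpolation and inverse estimates shows that the hidden constant depends only on $\mathcal{T}_0$, $\gamma_\rho$ (and $d$, $p$), as claimed.

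The stated special cases follow by inserting $\rho\equiv1$ (grading $1$) and the explicit lower bound \eqref{eq:MaxGradCR} for $1/q$. For $p=2$ the condition reduces to $2^{1/d}<1/q=\tfrac{d+\sqrt{d+2}}{d-\sqrt{d+2}}$; since $1/q=\infty$ for $d=2$ and $1/q-1$ decays like $2/\sqrt{d}$ whereas $2^{1/d}-1$ decays like $(\ln 2)/d$ as $d\to\infty$, this holds for every $d\geq2$ (with a finite check for the intermediate dimensions), which gives the $W^{1,2}$-stability for all $d\geq 2$. For general $p\in[1,\infty]$ the extreme cases $p\in\{1,\infty\}$ give $\abs{\frac12-\frac1p}=\frac12$ and hence the requirement $2^{1/d+1/2}<\tfrac{d+\sqrt{d+2}}{d-\sqrt{d+2}}$, which a direct numerical comparison shows to hold for $d\leq32$ and to fail for $d=33$; this yields the asserted range $d\leq 32$.

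The argument is essentially a transcription of the proof of Theorem~\ref{thm:weighted-Sobolev}, so there is no serious obstacle; the only genuinely new input is the (entirely standard) list of approximation and $W^{1,p}$-stability properties of $\INC$ and their weighted versions. The points one must be careful about are that the gradient is taken element-wise throughout ($\nablaNC$) and that $\INC$ is element-local, so that---unlike for the Scott--Zhang operator---no neighbourhood of $T$ enters the local estimates; this is precisely what makes the extra factor in the grading condition equal to exactly $\gamma_h=2^{1/d}$ and renders the dimension bound $d\leq32$ sharp relative to the estimate \eqref{eq:MaxGradCR}. The remaining work is then the purely arithmetic verification of these dimension bounds.
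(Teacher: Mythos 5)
Your proposal is correct and follows essentially the same route as the paper: the paper's proof simply cites the interpolation and stability estimates for $\INC$ from Ortner--Praetorius and then says to replace the Scott--Zhang operator by $\INC$ in the proof of Theorem~\ref{thm:weighted-Sobolev}, which is exactly the substitution you carry out in detail (including the correct application of Corollary~\ref{cor:LpStabCR} to the weight $\rho h^{-1}$ of grading $\gamma_\rho 2^{1/d}$ and the arithmetic for $d\leq 32$). Your additional observation that $\INC$ is element-local, so no patch neighbourhood enters the local estimates, is accurate but not needed beyond what the paper's cited estimates already provide.
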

\begin{proof}
Let $\INC \colon W^{1,1}(\Omega)\to \CR$ defined by $\INC w = \sum_{f\in\mathcal{F}} \dashint_f w\,\mathrm{d}s\ \psi_f$ for all $w\in W^{1,1}(\Omega)$ be the non-conforming interpolation operator.
By \cite[Lem.\ 2]{OrtnerPraetorius11} this operator satisfies for all $w \in W^{1,1}(\Omega)$ and $p\in [1,\infty]$ the estimate 
  \begin{align*}
  \lVert w - \INC w\rVert_{p,T} \leq 2\, h_T\, \lVert \nabla w\rVert_{p,T}\qquad \text{and}\qquad \lVert \nabla \INC w\rVert_{p,T} \leq \lVert \nabla w\rVert_{p,T}.
  \end{align*}
Replacing the Scott--Zhang operator by $\INC$ in the proof of Theorem~\ref{thm:weighted-Sobolev} shows the claim. 
\end{proof}
\begin{remark}[Zero boundary values]
Similar arguments as the ones in Sections~\ref{sec:dirichl-bound-valu} and \ref{sec:stability-zero-val} extend the results of this section to the case of zero boundary conditions. 
\end{remark}

\section*{Acknowledgement}
We appreciate the truely helpful comments, the careful suggestions as well as the time invested by the referees. 
We would like to thank Fernando Gaspoz for his insights and valuable discussions on the grading of meshes generated by higher dimensional bisection algorithms.  
\bibliographystyle{amsalpha}

\bibliography{L2stab_H1proj}

\end{document}